\numberwithin{equation}{section}
\newtheorem{propn}{Proposition}[section]
\newtheorem{thm}[propn]{Theorem}
\newtheorem{lemma}[propn]{Lemma}
\newtheorem{cor}[propn]{Corollary}
\newtheorem{claim}[propn]{Claim}
\newtheorem*{thm*}{Theorem}
\theoremstyle{definition}
\newtheorem{defn}[propn]{Definition}
\newtheorem*{examples}{Examples}
\newtheorem{rem}{Remark}[section]
\newtheorem*{question}{Question}
\newcommand{\Nat}{\mathbb{N}}
 \newcommand{\D}{\mathbb{D}}
 \DeclareMathOperator{\ran}{ran}
 \DeclareMathOperator{\mult}{Mult}
\newcommand{\cla}{\mathcal{A}}
\newcommand{\clb}{\mathcal{B}}
\newcommand{\cld}{\mathcal{D}}
\newcommand{\cle}{\mathcal{E}}
\newcommand{\clf}{\mathcal{F}}
\newcommand{\clh}{\mathcal{H}}
\newcommand{\clk}{\mathcal{K}}
\newcommand{\cll}{\mathcal{L}}
\newcommand{\clm}{\mathcal{M}}
\newcommand{\clo}{\mathcal{O}}
\newcommand{\clp}{\mathcal{P}}
\newcommand{\clq}{\mathcal{Q}}
\newcommand{\cls}{\mathcal{S}}
\newcommand{\clw}{\mathcal{W}}
\newcommand{\clx}{\mathcal{X}}
\newcommand{\crh}{\mathscr{H}}
\newcommand{\raro}{\rightarrow}
\begin{document}

\title[Pure contractive multipliers]{Pure contractive multipliers of some reproducing kernel Hilbert spaces and Applications}


\author[Sarkar]{Srijan Sarkar}
\address{Department of Mathematics, Indian Institute of Science, Bangalore, 560012, India}
\email{srijans@iisc.ac.in,
srijansarkar@gmail.com}

\subjclass[2010]{47A13, 47A20, 47A56, 47A80, 47B38, 46C99, 46E20, 32A35, 32A36, 32A70, 30H10.}

\keywords{Reproducing kernels, multipliers, left-invertible operators, Hardy space, Bergman and weighted Bergman spaces, Dirichlet space, polydisc, unit ball, isometric dilation.}

\begin{abstract}
A contraction $T$ on a Hilbert space $\mathcal{H}$ is said to be \textit{pure} if the sequence $\{T^{*n}\}_{n}$ converges to $0$ in the strong operator topology. In this article, we prove that for contractions $T$, which  commute with certain tractable tuples of commuting operators $X = (X_1,\ldots,X_n)$ on $\mathcal{H}$, the following statements are equivalent:
\begin{enumerate}
\item[(i)] $T$ is a pure contraction on $\mathcal{H}$,
\item[(ii)] the compression $P_{\mathcal{W}(X)}T|_{\mathcal{W}(X)}$ is a pure contraction,
\end{enumerate}
where $\mathcal{W}(X)$ is the wandering subspace corresponding to the tuple $X$.

An operator-valued multiplier $\Phi$ of a vector-valued reproducing kernel Hilbert space (rkHs) is said to be \textit{pure contractive} if the associated multiplication operator $M_{\Phi}$ is a pure contraction. Using the above result, we find that operator-valued mulitpliers $\Phi(\bm{z})$ of several vector-valued rkHs's on the polydisc $\mathbb{D}^n$ as well as the unit ball $\mathbb{B}_n$ in $\mathbb{C}^n$ are pure contractive if and only if $\Phi(0)$ is a pure contraction on the underlying Hilbert space. The list includes Hardy, Bergman and Drury-Arveson spaces. Finally, we present some applications of our characterization of pure contractive multipliers associated with the polydisc.
\end{abstract}
\maketitle

\section{Introduction}
Let $\cle$ be a Hilbert space and $\clh_K(\cle)$ be a $\cle$-valued normalized reproducing kernel Hilbert space (abbreviated as rkHs) on some domain $\Omega \subset \mathbb{C}^n$. Our study is based on certain functions called \textit{mutliplers} of $\clh_K(\cle)$. The collection of multipliers, denoted by $\mult (\clh_K(\cle))$, consists of operator-valued functions $\Phi$ on $\Omega$ for which $\Phi f \in \clh_K(\cle)$ for all $f \in \clh_K(\cle)$. Associated to a multiplier, there always exists a multiplication operator on the rkHs. This forms a strong interplay between function theory and operator theory and our objective stems from this connection. More precisely, we are interested in the membership of mulitplication operators (arising from multipliers) in the following class of operators.
\begin{defn}
A contraction $T$ on $\clh$ (that is, $\|T\| \leq 1$) is said to be pure if $T^{*m} \raro 0$ in the strong operator topology (that is, for all $h \in \clh, \|T^{*m}h\| \raro 0)$ as $m \raro \infty$.
\end{defn}
Shift operators on several rkHs's are natural examples of pure contractions. In particular, shift operator on vector-valued Hardy spaces play a fundamental role. For a Hilbert space $\cle$, the $\cle$-valued Hardy space is defined by
\[
H_{\cle}^2(\D) := \{\sum_{n=0}^{\infty} a_n z^n \in \clo(\D;\cle): a_n \in \cle, n \in \Nat, \sum_{n=0}^{\infty} \|a_n\|_{\cle}^2 < \infty\},
\] 
where, $\mathbb{D}$ is the open unit disc in $\mathbb{C}$. The scalar-valued Hardy space is denoted by $H^2(\D)$ and the collection of multipliers turns out to be the set of all bounded analytic functions on $\D$, denoted by $H^{\infty}(\D)$. $\mult (H_{\cle}^2(\D))$ is precisely the operator-valued bounded analytic functions on $\D$, denoted by $H_{\clb(\cle)}^{\infty}(\D)$, where $\clb(\cle)$ stands for bounded operators on $\cle$.

Using the reproducing property of the Szeg\"o kernel of $H_{\cle}^2(\D)$, one can easily show that $M_z \in \clb(H_{\cle}^2(\D))$ is a pure contraction. In fact, the remarkable Sz-Nagy-Foias model (see \cite{NF}) for a contraction proves that \textsf{a pure contraction upto unitary equivalence is the compression operator $P_{\clq} M_z|_{\clq}$ where, $\clq$ is a $M_z^*$ invariant closed subspace of some vector-valued Hardy space}. An important characterization that follows from the above model shows that: \textsf{a contraction $T$ on $\clh$ is pure if and only if its characteristic function $\Theta_T(z)$ is a \textit{inner} function} (see \cite{NF}). Although this is a complete characterization, determining the behaviour of $\Theta_T$ is a challenging problem in itself and, thus, it is seldom used for checking pure contractions. Hence, it is a natural question to find more examples of pure contractions and more importantly, to find a tractable criterion. Before stating our main question, let us establish the  following convention.
\begin{defn}
For a rkhs $\clh_K(\cle)$, a multiplier $\Phi \in \mult(\clh_K(\cle))$ is said to be \textit{contractive} if $M_{\Phi}$ is contraction on $\clh_K(\cle)$. Furthermore, $\Phi \in \mult (\clh_K(\cle))$ is said to be \textit{pure contractive} if $M_{\Phi}$ is a pure contraction on $\clh_K(\cle).$
\end{defn}

With this terminology in mind, let us state the problem of our interest.
\begin{question}
Given a $\cle$-valued reproducing kernel Hilbert space $\clh_K(\cle)$, find a tractable necessary and sufficient condition for pure contractive multipliers in $\mult (\clh_K(\cle))$.
\end{question}

An answer to the above question for vector-valued Hardy spaces on the disc can be derived from a result by Bercovici et al. (Lemma 3.3, \cite{BDF}). More precisely, \textsf{a contractive operator-valued analytic function $\Phi(z) \in H_{\clb(\cle)}^{\infty}(\D)$ is pure contractive if and only if $\Phi(0)$ is a pure contraction on $\cle$}. Recent works show that similar conclusions hold true for Toeplitz operators on $H^2(\D^n)$ (see \cite{CIL}). 
It turns out that, to extend this result to a wider class of vector-valued rkHs's on the polydisc, it is important to identify the role played by shift operators. In the case of some well-studied rkHs's we know that the shift operators exhibit properties like left-invertibility or being a pure isometry. The following result, obtained recently by the author, shows that it is indeed the fact that the shift operator, on a vector-valued Hardy space on the disc, is a pure isometry that leads to the result by Bercovici et al. (see Corollary 3.3, \cite{SS}).
\begin{propn}[paraphrasing [Proposition 3.2, \cite{SS}]\label{pure_mult1}
Let $T$ be a contraction on a Hilbert space $\clh$ and let $V$ be any pure isometry on $\clh$ such that $TV=VT$. Then the following are equivalent:
\begin{enumerate}
\item[(i)] $T$ is a pure contraction on $\clh$,
\item[(ii)] $P_{\clw(V)} T|_{\clw(V)}$ is a pure contraction on $\clw(V)$,
\end{enumerate}
where, $\clw(V):= \clh \ominus VH$ is the wandering subspace of $V$.
\end{propn}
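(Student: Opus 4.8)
The plan is to transport the whole problem, by means of the Wold decomposition of the pure isometry $V$, to the vector-valued Hardy space on $\D$, where the equivalence becomes exactly the quoted Lemma~3.3 of \cite{BDF}. Before doing that I would dispose of $(i)\Raro(ii)$ directly: since $TV=VT$ we have $T(V\clh)=V(T\clh)\subseteq V\clh$, so $V\clh$ is $T$-invariant and hence $\clw(V)=\clh\ominus V\clh$ is $T^{*}$-invariant. Consequently $\big(P_{\clw(V)}T|_{\clw(V)}\big)^{*}=P_{\clw(V)}T^{*}|_{\clw(V)}=T^{*}|_{\clw(V)}$, and therefore $\big(P_{\clw(V)}T|_{\clw(V)}\big)^{*m}=T^{*m}|_{\clw(V)}$ for every $m$; if $T^{*m}\raro 0$ strongly on $\clh$ then it does so on the subspace $\clw(V)$, which is $(ii)$.

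For $(ii)\Raro(i)$ — in fact for the full equivalence — I would use that a pure isometry is a shift. The Wold decomposition gives $\clh=\bigoplus_{n\ge 0}V^{n}\clw(V)$, that is, a unitary $U\colon\clh\to H^{2}_{\clw(V)}(\D)$ with $UVU^{*}=M_{z}$ which identifies $\clw(V)$ with the subspace of constant functions. Since $T$ commutes with $V$, the operator $UTU^{*}$ commutes with $M_{z}$ on $H^{2}_{\clw(V)}(\D)$, hence $UTU^{*}=M_{\Phi}$ for a multiplier $\Phi\in H^{\infty}_{\clb(\clw(V))}(\D)$, and $\Phi$ is contractive because $\|\Phi\|_{\infty}=\|M_{\Phi}\|=\|T\|\le 1$. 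Evaluating $M_{\Phi}$ on a constant function $e$ yields $\Phi(\cdot)e$, whose constant term is $\Phi(0)e$; thus under $U$ the compression $P_{\clw(V)}T|_{\clw(V)}$ becomes $\Phi(0)$. Now $(i)$ reads ``$M_{\Phi}$ is a pure contraction'' and $(ii)$ reads ``$\Phi(0)$ is a pure contraction'', and these are equivalent by Lemma~3.3 of \cite{BDF}, so both translate back to give the claimed equivalence for $T$.

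The main obstacle, were one to avoid invoking \cite{BDF}, is precisely the nonelementary implication $\Phi(0)$ pure $\Raro M_{\Phi}$ pure on $H^{2}_{\clw(V)}(\D)$: trying to bound $\|M_{\Phi}^{*m}f\|$ for a polynomial $f$ termwise produces a factor that grows with $m$, so a sharper argument (as in \cite{BDF}) is needed, and it is this input that carries all the content. The remaining ingredients — the Wold decomposition of $V$, the identification of the commutant of $M_{z}$ on a vector-valued Hardy space with the algebra of bounded analytic multipliers, and the computation identifying $\Phi(0)$ with the compression to $\clw(V)$ — are routine and I would treat them briefly.
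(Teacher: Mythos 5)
Your argument is correct: the verification that $\clw(V)$ is $T^{*}$-invariant gives $(i)\Raro(ii)$ exactly as you say, and the Wold decomposition, the identification of the commutant of $M_{z}$ on $H_{\clw(V)}^{2}(\D)$ with $H_{\clb(\clw(V))}^{\infty}(\D)$, and the computation $P_{\clw(V)}T|_{\clw(V)}\cong\Phi(0)$ are all standard and correctly executed. However, your route is the reverse of the one the paper takes. The paper does not reprove this proposition here; it quotes it from \cite{SS}, where (as the introduction explains) it is proved directly, using the Beurling--Lax--Halmos description of $M_z$-invariant subspaces together with a wandering-subspace argument, and the Bercovici--Douglas--Foias statement (Lemma~3.3 of \cite{BDF}) is then \emph{derived} from it as a corollary. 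The same direct strategy is what the paper generalizes in Theorem~\ref{main1}: one shows $\clw(V)\subseteq\ker A_{T}$ for the strong limit $A_{T}^{2}$ of $T^{m}T^{*m}$, and if $\ker A_{T}$ were proper one produces a nonzero $\eta\in(\ker A_{T})^{\perp}$ with $V^{*}\eta\in\ker A_{T}$ and runs the decomposition/limit argument of Lemmas~\ref{vector2} and~\ref{converge} to reach a contradiction (note that an isometry is its own Cauchy dual, so a pure isometry satisfies all hypotheses of Theorem~\ref{main1} with $n=1$). Your proof instead outsources the only nontrivial implication to Lemma~3.3 of \cite{BDF}; this is not circular, since that lemma is an independent prior result, but it means your argument carries no content beyond the translation, and — as you yourself note — it cannot be the intended proof strategy, because the whole point of the abstract formulation is to have an argument that survives in settings (tuples of left-invertible operators on $\D^{n}$, regular spaces on $\mathbb{B}_{n}$) where neither a Beurling--Lax--Halmos theorem nor a BDF-type multiplier lemma is available. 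In short: correct, economical, but logically inverted relative to the paper, and non-generalizable where the paper's direct method is.
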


In general, wandering subspaces of operators on Hilbert spaces play an important role in connection with invariant subspaces (see \cite{Halmos, Shimorin}). The above result is another instance where, the importance of wandering subspace becomes apparent. In other words, it forms a tractable subset of the Hilbert space $\clh$ that can be used to determine certain pure contractions. Let us give a few more definitions that are required for describing our main result.

\begin{defn}
For a tuple of operators $X = (X_1,\ldots,X_n)$ on a Hilbert space $\clh$, the wandering subspace is defined by
\[
\clw(X) := \bigcap_{i=1}^n \ker X_i^* = \clh \ominus \sum_{i=1}^n X_i \clh.
\]
\end{defn}

\begin{defn}
A tuple of commuting operators $X=(X_1,\ldots,X_n)$ on $\clh$ is said to possess the wandering subspace property if $\clh = \bigvee_{\bm{k} \in \Nat^n} X^{\bm{k}} \clw(X)$.
\end{defn}

Returning to our discussion on multipliers: the reader might expect that pure contractive multipliers in higher dimensions (for instance, Hardy space on polydisc) can also be derived from Proposition \ref{pure_mult1}. However, there exists the following  difficulty: for a tuple of operators $X= (X_1,\ldots,X_n)$ on $\clh$ the wandering subspace $\clw(X_i)$ for any $i \in \{1,\ldots,n\}$ is a significantly larger set containing $\clw(X)$. For instance, if we consider the simple example of $M_{\bm{z}} = (M_{z_1},\ldots,M_{z_n})$ on $H^2(\D^n)$ (the Hardy space on polydisc), then
\[
\clw(M_{\bm{z}}) = \mathbb{C}, \quad  \text{whereas} \quad \clw(M_{z_i}) \cong H^2(\D^{n-1}) \quad (i \in \{1,\ldots,n\}).
\]
Another difficulty that lies in higher dimensions is the absence of a Beurling-Lax-Halmos type result for shift-invariant closed subspaces, which plays an important role in proving Proposition \ref{pure_mult1}. Thus, it is evident that we require new techniques for proving an analogous result that can be applied to higher dimensions. Here, we follow an approach popularized by the methods of Halmos (in \cite{Halmos}) and Shimorin (in \cite{Shimorin}) by focussing on abstract tuples of operators on Hilbert spaces that behave like shift operators on several rkHs's. More precisely, we are able to extend Proposition \ref{pure_mult1} to pure contractions commuting with certain tuples of left-invertible operators.

\begin{thm}\label{main1}
Let $T$ be a contraction on $\clh$ and let $X=(X_1,\ldots,X_n)$ be a $n$-tuple of doubly commuting left-invertible operators on $\clh$ such that $X_i T = T X_i $, for all $i \in \{1,\ldots,n\}$. Moreover, assume that both $X$ and the corresponding $n$-tuple of Cauchy-duals $X^{\prime}= (X_1^{\prime},\ldots,X_n^{\prime})$ possess the wandering subspace property on $\clh$. Then the following are equivalent:
\item[(i)] $T$ is a pure contraction on $\clh$.
\item[(ii)]$P_{\clw(X)} T|_{\clw(X)}$ is a pure contraction on $\clw(X)$.
\end{thm}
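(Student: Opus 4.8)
The plan is to dispose of the easy implication directly and then prove (ii) $\Raro$ (i) by an induction on $n$ that uses double commutativity to peel off one operator at a time, bottoming out at a single left-invertible operator.

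\emph{The implication (i) $\Raro$ (ii).} First I would record that $\clw(X)$ is invariant under $T^*$: if $v\in\clw(X)$ then $X_i^*v=0$, and from $X_iT=TX_i$ we get $X_i^*T^*=T^*X_i^*$, whence $X_i^*(T^*v)=T^*(X_i^*v)=0$ for every $i$, so $T^*v\in\clw(X)$. Consequently $C:=P_{\clw(X)}T|_{\clw(X)}$ satisfies $C^*=T^*|_{\clw(X)}$, and hence $C^{*m}=T^{*m}|_{\clw(X)}$. Thus if $T^{*m}\raro0$ strongly on $\clh$, it does so in particular on $\clw(X)$, giving (ii). This direction needs neither left-invertibility nor the wandering-subspace hypotheses.

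\emph{Reduction for (ii) $\Raro$ (i).} Since $T$ is a contraction, $\|T^{*m}\|\le1$ for all $m$, so to prove $T^{*m}\raro0$ strongly it suffices to prove it on a total subset of $\clh$. The wandering subspace property of $X$ makes $\{X^{\bm{k}}w:\bm{k}\in\Nat^n,\ w\in\clw(X)\}$ total, so the goal becomes $T^{*m}X^{\bm{k}}w\raro0$ for each fixed $\bm{k}$ and $w$, granted (by (ii)) that $T^{*m}w=C^{*m}w\raro0$ for $w\in\clw(X)$. The difficulty is visible already here: $T^*$ commutes with each $X_i^*$ but \emph{not} with $X_i$, so one cannot simply move $T^{*m}$ across $X^{\bm{k}}$.

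\emph{The inductive peeling.} I would induct on $n$, splitting off $X_1$. Put $\clw_1:=\ker X_1^*$. Because $X_1^*X_l=X_lX_1^*$ and $X_1^*X_l^*=X_l^*X_1^*$ for $l\ge2$, the subspace $\clw_1$ reduces each $X_l$ and hence each $X_l'=X_l(X_l^*X_l)^{-1}$; moreover $\clw_1$ is $T^*$-invariant (same computation as above), so $C_1:=P_{\clw_1}T|_{\clw_1}$ is a contraction commuting with every $X_l|_{\clw_1}$, with $C_1^*=T^*|_{\clw_1}$. One then checks that $(X_2,\dots,X_n)|_{\clw_1}$ is again a doubly commuting $(n-1)$-tuple of left-invertible operators whose Cauchy dual is $(X_2',\dots,X_n')|_{\clw_1}$, and---this is where the wandering-subspace hypotheses on both $X$ and $X'$ are genuinely used---that both this restricted tuple and its Cauchy dual inherit the wandering subspace property on $\clw_1$. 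Since the joint wandering subspace of the restricted tuple is $\clw_1\cap\bigcap_{l\ge2}\ker X_l^*=\clw(X)$ and the corresponding compression of $C_1$ is exactly $C$, the induction hypothesis yields ``$C$ pure $\Raro$ $C_1$ pure'', while the single-variable case applied to the pair $(T,X_1)$ yields ``$C_1$ pure $\Raro$ $T$ pure''. Chaining these proves (ii) $\Raro$ (i).

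\emph{The base case, and the main obstacle.} Everything thus rests on the single-operator statement: if $X$ is left-invertible, $XT=TX$, and both $X$ and $X'$ have the wandering subspace property, then $P_{\clw(X)}T|_{\clw(X)}$ pure $\Raro$ $T$ pure. Here I would exploit both hypotheses to pass to the Cauchy-dual analytic model: the reconstruction
\[
h=\sum_{k\ge0}X^{k}\,P_{\clw(X)}(X')^{*k}h
\]
identifies $\clh$ with a $\clw(X)$-valued sequence space on which $X$ acts as a weighted shift, and, since $T$ commutes with $X$, $T$ becomes multiplication by an operator-valued symbol $\Phi$ with $\Phi(0)=C$. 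In this model $T^*$ is lower triangular with respect to the grading by degree, with constant diagonal block $C^*$, so the decay of $C^{*m}$ supplied by (ii) ought to propagate to every degree. The main obstacle is exactly the one met by Bercovici, Douglas and Foias in the Hardy-space setting (Lemma 3.3, \cite{BDF}): in infinite dimensions the off-diagonal contributions accumulate, and strong convergence of $C^{*m}$ does not by itself control them. I expect the bulk of the work to be a weighted-space adaptation of their argument, using the uniform bound $\|T^{*m}\|\le1$ (rather than a lossy term-by-term estimate) together with the boundedness of the weight ratios coming from left-invertibility to sum the triangular contributions and conclude $T^{*m}\raro0$ strongly.
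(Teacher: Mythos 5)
Your easy direction and your inductive reduction are both sound: the computation showing that $\clw(X)$ is $T^*$-invariant, so that $(P_{\clw(X)}T|_{\clw(X)})^{*m}=T^{*m}|_{\clw(X)}$, is exactly the paper's proof of (i) $\Raro$ (ii), and the peeling argument --- restricting to $\clw_1=\ker X_1^*$, which reduces $X_2,\dots,X_n$ by double commutativity, checking that the restricted tuple and its Cauchy dual inherit the wandering subspace property with joint wandering subspace $\clw(X)$, and chaining the induction hypothesis with the single-operator case --- is a legitimate alternative organization (the paper instead runs one direct $n$-variable argument). But this reduction pushes the entire analytic content of the theorem into your base case, and that is exactly where the proposal stops being a proof. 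You translate the base case into a statement about a multiplication operator on Shimorin's analytic model and then write that you \emph{expect} the bulk of the work to be a weighted-space adaptation of the Bercovici--Douglas--Foias argument. That expectation is not substantiated: for a general left-invertible operator the model space is a reproducing kernel Hilbert space whose norm is not a weighted $\ell^2$-norm of Taylor coefficients, the off-diagonal blocks of $T^{*m}$ in the triangular representation are not controlled by summable weight ratios, and the uniform bound $\|T^{*m}\|\leq 1$ by itself does not let you sum those contributions. The key implication is therefore asserted, not proved, and it is precisely the hard part.

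For comparison, the paper closes this gap without invoking the analytic model at all. It forms $A_T^2=\mathrm{SOT}\text{-}\lim_m T^mT^{*m}$ and aims to show $\ker A_T=\clh$; hypothesis (ii) gives $\clw(X)\subseteq\ker A_T$. The two ingredients your sketch is missing are: (a) Lemma \ref{vector2}, which uses the wandering subspace property of the Cauchy dual $X^{\prime}$ together with the identity $X_i^*X_i^{\prime}=I$ to produce, whenever $\ker A_T$ is proper and not $X$-reducing, a non-zero vector $\eta\in(\ker A_T)^{\perp}$ with $X_i^*\eta\in\ker A_T$ for every $i$ --- this is the real role of the hypothesis on $X^{\prime}$, which in your write-up enters only through the reconstruction formula; and (b) Lemma \ref{converge}, the observation that a left-invertible operator is bounded below, so that for a bounded sequence $h_m\in X_i\clh$ the convergence $\|X_i^*h_m\|\raro 0$ forces $\|h_m\|\raro 0$. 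Decomposing $T^{*m}\eta$ along $\clw(X)\oplus\sum_i X_i\clh$, applying (a) to see that $\|X_i^*T^{*m}\eta\|=\|T^{*m}X_i^*\eta\|\raro 0$, and then applying (b) component by component yields $A_T\eta=0$, a contradiction. If you wish to keep your inductive framework you must supply an argument of this kind for the $n=1$ case; the model-theoretic route you describe would at minimum require a genuinely new estimate that the proposal does not contain.
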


An interesting thing to note in the above result is that the role of the wandering subspace property becomes more evident. In Proposition \ref{pure_mult1}, this property was not assumed because a pure isometry always satisfies the wandering subspace property due to the Wold-von Neumann decomposition.

Now, our main idea is to leverage Theorem \ref{main1} in the setting of rkHs's. It is well-known that the property of left-invertibility is satisfied by shift operators on several rkHs's, including the Hardy, (un-weighted and certain weighted) Bergman and Dirichlet spaces on the unit disc (see \cite{Halmos}, \cite{Shimorin}, \cite{Richter}). This property extends to tuples of doubly commuting shift operators on these rkHs's on the unit polydisc as well. This is achieved through the canonical identification of these spaces with the the tensor product of rkHs's on the unit disc. Using this identification and Theorem \ref{main1}, we completely characterize pure contractive multipliers of Hardy/Bergman/Dirichlet spaces on the unit polydisc.

\begin{thm}\label{main3}
For a Hilbert space $\cle$, let $\clh_k \otimes \cle$ be the $\cle$-valued Hardy/Bergman/ Dirichlet space on the polydisc. Then for any contractive $\Phi(\bm{z}) \in \mbox{Mult}(\clh_k \otimes \cle)$, the following statements are equivalent:
\begin{enumerate}
\item[(i)] $M_{\Phi}$ is  a pure contraction on $\clh_k \otimes \cle$.
\item[(ii)] $\Phi(0)$ is a pure contraction on $\cle$.
\end{enumerate}
\end{thm}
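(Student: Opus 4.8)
The plan is to apply Theorem \ref{main1} with $T = M_\Phi$ and with $X = (M_{z_1},\ldots,M_{z_n})$, the tuple of coordinate multiplication operators on $\clh_k \ot \cle$. I would begin by invoking the canonical tensor factorization: each of the Hardy, Bergman and Dirichlet spaces on $\D^n$ decomposes as $\clh_k = \clh_{k_1}\ovt\cdots\ovt\clh_{k_n}$, where $\clh_{k_i}$ is the corresponding one-variable space, and under this identification $M_{z_i}$ is $M_z$ acting in the $i$-th tensor slot and the identity elsewhere (further tensored with $I_\cle$). From this, three of the four hypotheses of Theorem \ref{main1} are immediate: the one-variable shift $M_z$ on each $\clh_{k_i}$ is left-invertible (classical; see Shimorin, Richter) and tensoring with identities preserves left-invertibility, so each $M_{z_i}$ is left-invertible; distinct $M_{z_i}, M_{z_j}$ act on different tensor factors, hence the tuple is doubly commuting; and $M_{z_i} M_\Phi = M_\Phi M_{z_i}$ since $\bm{z}^{\bm{e}_i}\Phi(\bm{z})f(\bm{z}) = \Phi(\bm{z})\bm{z}^{\bm{e}_i}f(\bm{z})$ for all $f \in \clh_k\ot\cle$.

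The remaining hypothesis — that both $X$ and its Cauchy-dual tuple $X' = (X_1',\ldots,X_n')$ possess the wandering subspace property — is the step I expect to require the most care, and presumably it is established in an earlier section. For the Hardy space each $M_{z_i}$ is a pure isometry, so $X' = X$ and the property is automatic from the Wold--von Neumann decomposition. For the Bergman and Dirichlet spaces I would appeal to the one-variable wandering subspace theorems for $M_z$ (Aleman--Richter--Sundberg, Richter) together with the corresponding statements for the Cauchy dual $M_z' = M_z(M_z^*M_z)^{-1}$ in the sense of Shimorin, and then lift these to the doubly commuting tuple on the tensor product. This lifting (wandering subspace property for $X$ and $X'$ from the factorwise property, using double commutativity) is the genuine technical content behind the reduction.

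With the hypotheses in hand, I would identify the wandering subspace. Since $\ker M_z^* = \C\cdot 1$ in each $\clh_{k_i}$ and the normalization gives the constant function $1$ as the reproducing kernel at the origin, $\ker M_{z_i}^* = \clh_{k_1}\ot\cdots\ot\C\ot\cdots\ot\clh_{k_n}\ot\cle$, so $\clw(X) = \bigcap_{i=1}^n \ker M_{z_i}^* = \C\ot\cle$, which I identify with $\cle$ via $e \mapsto 1\ot e$. Finally, writing $\Phi(\bm{z}) = \sum_{\bm{k}\in\Nat^n}\Phi_{\bm{k}}\,\bm{z}^{\bm{k}}$ with $\Phi_{\bm{0}} = \Phi(0)$, we have $M_\Phi(1\ot e) = \sum_{\bm{k}}\bm{z}^{\bm{k}}\ot\Phi_{\bm{k}}e$; since the monomials $\{\bm{z}^{\bm{k}}\}$ are mutually orthogonal in each of the three spaces, the projection onto $\clw(X)$ extracts the $\bm{k}=\bm{0}$ term, so $P_{\clw(X)}M_\Phi|_{\clw(X)}$ is exactly $\Phi(0)$ under the identification $\clw(X)\cong\cle$. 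Theorem \ref{main1} then yields that $M_\Phi$ is a pure contraction on $\clh_k\ot\cle$ if and only if $P_{\clw(X)}M_\Phi|_{\clw(X)}$ is a pure contraction on $\clw(X)$, i.e.\ if and only if $\Phi(0)$ is a pure contraction on $\cle$, which is the assertion of the theorem.
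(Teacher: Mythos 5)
Your proposal is correct and follows essentially the same route as the paper: identify the polydisc space with the tensor product of one-variable spaces, verify the hypotheses of Theorem \ref{main1} for the doubly commuting tuple $M_{\bm{z}}$ (with the wandering subspace property for the tuple and its Cauchy dual supplied by the lifting result, Theorem \ref{left_char}/Theorem \ref{analytic_powers} in the paper), and then identify $\clw(M_{\bm{z}})\cong\cle$ so that the compression $P_{\clw(M_{\bm{z}})}M_\Phi|_{\clw(M_{\bm{z}})}$ is $\Phi(0)$. The step you flag as requiring the most care is indeed exactly the content the paper isolates in its Subsection on wandering subspaces for tuples on tensor products.
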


There are several rkhs's on the open unit ball $\mathbb{B}_n \subset \mathbb{C}^n$, $n \geq 2$, to which we are able to extend the above result by using a different approach. The list includes Drury-Arveson, Hardy, Bergman spaces as well as unitarily invariant \textit{complete Nevanlinna-Pick} spaces (see Theorems \ref{Berg_char} and \ref{cnp_char}). Shift operators on these rkHs's do not have properties like left-invertiblity and therefore, the method used in Section \ref{Sec3} completely fails here. Thus, we look back to Proposition \ref{pure_mult1} and consider an approach that is similar to the proof of Proposition \ref{pure_mult1} by looking for characterizations of shift invariant subspaces. A remarkable result by Clou\^atre et al. shows that a Beurling-Lax-Halmos type characterization indeed holds true for spaces with \textit{cnp} factors  (\cite{CHS}). This result plays an important role in proving our main result for the above-mentioned rkHs's.

Let us now discuss how this article is arranged. Excluding this present section, the rest of the paper is organized as follows. Section \ref{Sec2} contains notations, definitions and a brief overview of several results that we will use in this article. In Section \ref{Sec3}, we prove Theorem \ref{main1}, followed by proving Theorem \ref{main3}. In Section \ref{Sec4}, we shift our focus to rkHs's on $\mathbb{B}_n$. We prove a result that is analogous to Theorem \ref{main3}, for pure contractive mulitpliers of vector-valued Drury-Arveson/Hardy/Bergman spaces and  unitarily invariant \textit{complete Nevanlinna-Pick} spaces on $\mathbb{B}_n$. We conclude the latter section by applying our results to obtain a characterization for pure contractions belonging to the commutant of certain tuples of commuting operators for which a Wold-von Neumann decomposition type theorem has been established by Eschmeier and Langend\"orfer in \cite{EL}. Finally, in Section \ref{Sec5}, we apply the results in Section \ref{Sec3} to obtain pure isometric dilation of certain important  tuples of commuting contractions appearing in recent papers by Barik et al. (see \cite{BDHS} and \cite{BDS}).

\section{Notations and Preliminaries}\label{Sec2}
The aim of this section is to make this article as much self-contained as possible by recalling definitions and results that are important for this article. We also prove some new results at the end of this section, that will be mainly used in Section \ref{Sec4}.

For any $n$-tuple of bounded operators $X=(X_1,\ldots,X_n)$ on $\clh$, a subspace $\cls \subset \clh$ is said to be $X$-joint invariant if $T_i  \cls \subseteq \cls$ for $i=1,\ldots,n$. Let $\bm{m}=(m_1,\ldots,m_n) \in \Nat^n$, then 
\[
|\bm{m}|:= m_1 + \ldots + m_n; \quad \bm{m}!: = m_1 ! \cdots m_n ! ; \quad T^{\bm{m}}:= T_1^{m_1} \cdots T_n^{m_n}.
\]
\subsection{Left-invertible operators}
In this subsection is to present a brief overview of the results developed by Shimorin in his remarkable paper \cite{Shimorin}. We recommend the Ph.D. thesis of DeSantis for a comprehensive treatise on this topic \cite{Derek}.

The roots of this discussion lies in the classical work of Halmos \cite{Halmos} in which he established the Beurling's theorem for vector-valued Hardy spaces. He used the Wold-von Neumann decomposition theorem for isometries, which states that an isometric operator $V$ on $\clh$ always admits a decomposition $V= V|_{\clh_0}\oplus V|_{\clh_1}$ where, $\clh_0,\clh_1$ are reducing subspaces for $V$ such that $V|_{\clh_{0}}$ is a unitary, $V|_{\clh_1}$ is a pure isometry and $\clh = \clh_{0} \oplus \clh_{1}$. Furthermore, the subspaces $\clh_0, \clh_1$ have the following precise structure
\[
\clh_{0} = \bigcap_{m=1}^{\infty} V^m \clh ; \quad \clh_1 = \bigoplus_{m=0}^{\infty} V^{m} \clw(V). 
\]
In particular, if $\clh_0 = \{0\}$ (equivalently, $V$ is a pure isometry) then, $V$ is unitarily equivalent to $M_z$ on $H_{\clw(V)}^2(\D)$. Motivated by this result, we have the following definition.
\begin{defn}
A tuple of operators $X=(X_1,\ldots,X_n)$ on $\clh$ is said to be analytic if $\bigcap_{l=0}^{\infty} \sum_{|\bm{m}| = l} X^{\bm{m}} \clh = \{0\}$. 
\end{defn}
%
In the case of a single operator $T \in \clb(\clh)$ this condition turns out to be $\bigcap_{m=0}^{\infty} T^{m} \clh = \{0\}$.

The above decomposition result for isometries was extended in a certain sense by Shimorin in \cite{Shimorin}. He showed that operators $T$ on $\clh$, satisfying any one of the following conditions
\begin{enumerate}
\item[(i)] $T^{*2}T^2 - 2 T^* T + I_{\clh} \leq 0$;
\item[(ii)] $\|Tx + y \|^2 \leq 2(\|x\|^2 + \|Ty\|^2)$.
\end{enumerate}
admits a Wold-von Neumann type decomposition. In particular, if $T$ is assumed to be ananlytic, then it has the wandering subspace property on $\clh$. It is important to note that operators satisfying any one of the above conditions are always left-invertible. Let us recall some well-known characterizations for general left-invertible operators (see \cite{Shimorin} and it's list of references).
\begin{propn}\label{left1}
Let $T$ be a bounded operator on $\clh$. Then the following are equivalent
\begin{enumerate}
\item[(i)] $T$ is a left-invertible operator,
\item[(ii)] $T$ is bounded below,
\item[(iii)] $T^*T$ is invertible.
\end{enumerate}
\end{propn}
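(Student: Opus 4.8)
The plan is to prove the cycle of implications (i) $\Rightarrow$ (ii) $\Rightarrow$ (iii) $\Rightarrow$ (i), each step being elementary. For (i) $\Rightarrow$ (ii), I would start from a left inverse $S \in \clb(\clh)$ with $ST = I_{\clh}$ and observe that for every $x \in \clh$,
\[
\|x\| = \|STx\| \leq \|S\|\,\|Tx\|,
\]
so that $\|Tx\| \geq \|S\|^{-1}\|x\|$; since $S \neq 0$, this exhibits $T$ as bounded below. For (ii) $\Rightarrow$ (iii), suppose $\|Tx\| \geq c\|x\|$ for all $x \in \clh$ and some $c > 0$. Then
\[
\langle T^*Tx, x\rangle = \|Tx\|^2 \geq c^2\|x\|^2 \qquad (x \in \clh),
\]
i.e.\ $T^*T \geq c^2 I_{\clh}$. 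Since $T^*T$ is positive (hence self-adjoint), its spectrum is contained in $[c^2, \|T\|^2]$ and in particular avoids $0$, so $T^*T$ is invertible. (Equivalently, one may invert $c^2 I_{\clh} + (T^*T - c^2 I_{\clh})$ by a Neumann-series argument, but the spectral remark is cleanest.)

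For (iii) $\Rightarrow$ (i), if $T^*T$ is invertible I would simply set $S := (T^*T)^{-1}T^* \in \clb(\clh)$ and compute $ST = (T^*T)^{-1}(T^*T) = I_{\clh}$, which shows $T$ is left-invertible. This closes the cycle.

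There is no genuine obstacle in this proposition: the only non-algebraic ingredient is the standard fact that a positive operator bounded below by a positive multiple of the identity has $0$ outside its spectrum, immediate from the spectral theorem for self-adjoint operators. It is worth recording, for use later in the paper, that the canonical left inverse produced in the last step is $S = (T^*T)^{-1}T^* = (T')^*$, the adjoint of the Cauchy dual $T' = T(T^*T)^{-1}$ of $T$; this is the left inverse that will be referenced implicitly whenever Cauchy duals of left-invertible tuples are invoked.
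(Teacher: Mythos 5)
Your proof is correct and is the standard argument; the paper itself states Proposition \ref{left1} without proof, recalling it as a well-known fact from \cite{Shimorin} and its references, so there is no competing approach to compare against. Each implication in your cycle is sound, and your closing identification of the canonical left inverse $(T^*T)^{-1}T^*$ with $(T')^*$, the adjoint of the Cauchy dual $T' = T(T^*T)^{-1}$, is accurate and consistent with how the paper later uses these objects.
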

Shimorin in \cite{Shimorin} defined the the Cauchy-dual of a left-invertible operator $T$ to be 
\begin{equation}
T^{\prime} := T(T^*T)^{-1}.
\end{equation}
It is a simple but important observation that $T^{\prime}$ is also a left-invertible operator on $\clh$ and the Cauchy dual of $T^{\prime}$ is $T$.
\begin{propn}\label{left_1}
Let $T$ be a left invertible operator, then the following holds
\begin{enumerate}
\item[(i)] $T(T^*T)^{-1}T^*$ is a orthogonal projection onto $T\clh$.
\item[(ii)] $\ker T^{\prime *} = \ker T^*$.
\item[(iii)] $T^{\prime}\clh = T \clh$.
\end{enumerate}
\end{propn}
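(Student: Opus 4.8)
The plan is to establish part (i) by a direct computation and then read off (ii) and (iii) as easy consequences. Since $T$ is left-invertible, Proposition \ref{left1} guarantees that $T^*T$ is invertible; being positive, $T^*T$ has a bounded, positive, self-adjoint inverse $(T^*T)^{-1}$, so $T^{\prime} = T(T^*T)^{-1}$ is a well-defined bounded operator. Moreover $T$ is bounded below, hence injective with closed range, so $T\clh$ is a closed subspace and it makes sense to speak of the orthogonal projection onto it.

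For (i), I would put $P := T(T^*T)^{-1}T^*$ and first check that $P$ is an orthogonal projection. Since $(T^*T)^{-1}$ is self-adjoint, $P^* = T\bigl((T^*T)^{-1}\bigr)^* T^* = P$, and
\[
P^2 = T(T^*T)^{-1}T^*T(T^*T)^{-1}T^* = T(T^*T)^{-1}(T^*T)(T^*T)^{-1}T^* = T(T^*T)^{-1}T^* = P.
\]
Thus $P = P^* = P^2$. To identify its range, note $\ran P \subseteq \ran T = T\clh$ trivially, while for any $x \in \clh$ one has $Tx = T(T^*T)^{-1}(T^*T)x = P(Tx) \in \ran P$, so $T\clh \subseteq \ran P$. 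Hence $P$ is an orthogonal projection with range exactly $T\clh$, and since an orthogonal projection is determined by its range, $P$ is the orthogonal projection onto $T\clh$.

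For (ii), compute $T^{\prime *} = \bigl(T(T^*T)^{-1}\bigr)^* = (T^*T)^{-1}T^*$; as $(T^*T)^{-1}$ is invertible and hence injective, $T^{\prime *}h = 0$ if and only if $T^*h = 0$, i.e. $\ker T^{\prime *} = \ker T^*$. For (iii), since $(T^*T)^{-1}$ is a bijection of $\clh$ onto itself, $T^{\prime}\clh = T(T^*T)^{-1}\clh = T\clh$; alternatively, the identity $T = T^{\prime}(T^*T)$ gives $T\clh \subseteq T^{\prime}\clh$, while $T^{\prime}\clh = T\bigl((T^*T)^{-1}\clh\bigr) \subseteq T\clh$ gives the reverse inclusion.

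There is no real obstacle here: every step is routine once one knows that $(T^*T)^{-1}$ exists as a bounded operator and that $T\clh$ is closed. The only point to be careful about is that all three conclusions rely precisely on left-invertibility — this is exactly what upgrades $T^*T$ from merely injective to boundedly invertible (Proposition \ref{left1}) and what makes $T\clh$ closed, so none of the formal manipulations above would be legitimate without that hypothesis.
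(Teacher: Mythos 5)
Your proof is correct, and it is the standard argument: the paper itself states this proposition without proof, citing it as well known from Shimorin's work, and your computations (self-adjointness and idempotence of $T(T^*T)^{-1}T^*$, identification of its range with $T\clh$, and the injectivity/surjectivity of $(T^*T)^{-1}$ for parts (ii) and (iii)) are exactly the intended justification. No gaps.
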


In [Proposition 2.7, \cite{Shimorin}], Shimorin remarkably observed that $T$ and its Cauchy dual $T^{\prime}$ has the following dual properties:
\begin{equation}\label{wand_eqn}
\bigvee_{m \in \Nat} T^m \ker T^* = (\underset{m \geq 1}\bigcap T^{\prime m} \clh)^{\perp}; \bigvee_{m \in \Nat} T^{\prime m} \ker T^* = (\underset{m \geq 1}\bigcap T^m \clh)^{\perp}.
\end{equation}

This duality between $T$ and $T^{\prime}$ establishes the following result on wandering subspace property (see Corollary 2.8, \cite{Shimorin}).
\begin{cor}\label{left_3}
A left-invertible operator $T$ on $\clh$ possesses the wandering subspace property if and only if its Cauchy dual $T^{\prime}$ is analytic.
\end{cor}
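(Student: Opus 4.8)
The plan is to obtain the equivalence as a short formal consequence of the duality identities recorded in~\eqref{wand_eqn}. Recall first that for a single operator $T$ one has $\clw(T)=\ker T^{*}$, so by definition $T$ possesses the wandering subspace property exactly when $\bigvee_{m\in\Nat} T^{m}\ker T^{*}=\clh$. Now I would apply the first identity of~\eqref{wand_eqn}, namely
\[
\bigvee_{m\in\Nat} T^{m}\ker T^{*}=\Bigl(\,\bigcap_{m\geq 1} T^{\prime m}\clh\,\Bigr)^{\perp},
\]
and use that a closed subspace of $\clh$ equals $\clh$ precisely when its orthogonal complement is $\{0\}$. This converts the wandering subspace property of $T$ into the single condition $\bigcap_{m\geq 1} T^{\prime m}\clh=\{0\}$.

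Next I would match this against the analyticity of the Cauchy dual. By the definition of analyticity in the single-operator case, $T^{\prime}$ is analytic if and only if $\bigcap_{m=0}^{\infty} T^{\prime m}\clh=\{0\}$; since the $m=0$ term is $\clh$, this intersection coincides with $\bigcap_{m\geq 1} T^{\prime m}\clh$. Hence $T$ has the wandering subspace property $\iff \bigcap_{m\geq 1} T^{\prime m}\clh=\{0\}\iff T^{\prime}$ is analytic, which is exactly the claim. Symmetrically, using the second identity of~\eqref{wand_eqn} together with the fact that the Cauchy dual of $T^{\prime}$ is $T$, one recovers the dual statement that $T^{\prime}$ has the wandering subspace property iff $T$ is analytic.

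Given that~\eqref{wand_eqn} is already available, there is essentially no obstacle here; the only point requiring care is the bookkeeping of which of $T$ and $T^{\prime}$ sits on which side of~\eqref{wand_eqn}, which ultimately rests on $\ker T^{\prime *}=\ker T^{*}$ and $T^{\prime}\clh=T\clh$ from Proposition~\ref{left_1}. Were~\eqref{wand_eqn} not granted, the substantive work would be to establish that duality — showing that a vector orthogonal to $\bigvee_{m} T^{m}\ker T^{*}$ must lie in $T^{\prime}\clh$ and then, inductively, in $T^{\prime m}\clh$ for every $m$ — but that belongs to the cited development rather than to this corollary.
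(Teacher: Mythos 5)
Your derivation is correct and is exactly the route the paper takes: Corollary~\ref{left_3} is presented as an immediate consequence of the first duality identity in~\eqref{wand_eqn}, with the wandering subspace property of $T$ translating to $\bigl(\bigcap_{m\geq 1}T^{\prime m}\clh\bigr)^{\perp}=\clh$ and hence to the analyticity of $T^{\prime}$. Your bookkeeping of the single-operator definitions ($\clw(T)=\ker T^{*}$, and the $m=0$ term of the analyticity intersection being all of $\clh$) matches the paper's conventions, so there is nothing to add.
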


\subsection{Reproducing kernel Hilbert spaces and Multipliers}
Let $\Lambda$ be a set and $\cle$ be a Hilbert space. A operator-valued function $K: \Lambda \times \Lambda \raro \clb(\cle)$ is said to be a \textit{kernel} function if  for any $n \in \Nat$, $\{\lambda_1,\ldots \lambda_n\} \subseteq \Lambda$ and $\{\eta_1,\ldots,\eta_n\} \subseteq \cle$, the following positivity condition is satisfied
\[
\sum_{i,j=1}^n \langle K(\lambda_i,\lambda_j) \eta_j, \eta_i \rangle \geq 0.
\] 
The above positive semi-definite condition is often denoted by $K \geq 0$. Associated to this kernel function $K$, there exists a unique Hilbert space of $\cle$-valued analytic functions denoted by $\clh_K(\cle)$. In particular, the collection $\{ K(\cdot,\lambda)\eta: \lambda \in \Lambda, \eta \in \cle \}$ forms a total set in $\clh_K(\cle)$ and satisfies the following reproducing property
\[
\langle f, K(\cdot, \lambda) \eta \rangle_{\clh_K(\cle)} = \langle f(\lambda), \eta \rangle_{\cle} \quad (f \in \clh_K(\cle), \eta \in \cle, \lambda \in \Lambda).
\]
From the above identity one can easily compute the norm of the kernel function to be 
\[
\|K(\cdot, \lambda) \eta\|_{\clh_K(\cle)}^2 = \|K(\lambda,\lambda)\eta\|^2.
\]
\textsf{A scalar-valued kernel is denoted by $k$ and the corresponding rkHs by $\clh_k$. In this case, the reproducing property turns out to be $f(\lambda) = \langle f, k(\cdot, \lambda) \rangle$}. A scalar-valued kernel $k$ on $\Lambda$ is said to be \textit{irreducible} if $k(x,y) \neq 0$ for any $x,y \in \Lambda$ and $k_x$ and $k_y$ are linearly independent functions for $x \neq y$. Moreover, $k$ is said to be \textit{normalized} at some point $\lambda_{0} \in \Lambda$ if $k(\cdot, \lambda_{0}) \equiv 1$. 
\begin{defn} 
A $\clb(\cle)$-valued kernel $K$ on set $\Lambda$ is called \textit{quasi-scalar} if $K=kI_{\cle}$, where $k$ is a scalar-valued kernel on $\Lambda$.
\end{defn}
For domains $\Omega \subset \mathbb{C}^n$, a rkHs $\clh_K(\cle) \subset \clo(\Omega,\cle)$ with a quasi-scalar kernel $K=kI_{\cle}$ can always be identified with the rkHs $\clh_k \otimes \cle$ via the unitary map $U: \clh_K(\cle) \raro \clh_k \otimes \cle$ defined by
\[
U(\bm{z}^{\bm{m}} \eta) = \bm{z}^{\bm{m}} \otimes \eta \quad (\bm{z} \in \Omega, \bm{m} \in \Nat^n, \eta \in \cle).
\]
Thus, in the case of quasi-scalar kernels, the corresponding rkHs will simply be denoted by $\clh_k \otimes \cle$. Let us give a list of important rkHs with quasi-scalar kernels.
\begin{examples}
In the following, $\cle$ is any Hilbert space and $n \in \Nat \setminus \{0\}$.
\begin{enumerate}
\item[(i)]$H_{\cle}^2(\D^n):$  $\cle$-valued Hardy space on unit polydisc with the Szeg\"o kernel on $\D^n$,
\[
\mathbb{S}(\bm{z},\bm{w}) = \prod_{i=1}^n (1 - z_i \bar{w}_i)^{-1} I_{\cle}.
\]
\item[(ii)] $A_{\alpha, \cle}^2(\D^n)$: $\cle$-valued weighted Bergman spaces on $\D^n$ with the Bergman kernel 
\[
K(\bm{z},\bm{w}) = \prod_{i=1}^n (1-z_i \bar{w}_i)^{-2+ \alpha}I_{\cle} \quad (-1< \alpha <+\infty).
\]
For $\alpha=0$, the $\cle$-valued Bergman space on $\D^n$ is simply denoted by $A_{\cle}^2(\D^n)$.
\item[(iii)] $\cld_{\cle}(\D^n):$ $\cle$-valued Dirichlet spaces on $\D^n$ with the Dirichlet kernel
\[
K(\bm{z},\bm{w}) = \prod_{i=1}^n \big(-\frac{1}{z_i \bar{w}_i} \ln (1-z_i \bar{w}_i)^{-1} \big) I_{\cle}
\]
\item[(iv)] $H_m(\mathbb{B}_n, \cle):$ for any integer $m \geq 0$, the $\cle$-valued rkHs on the unit ball $\mathbb{B}_n \subset \mathbb{C}^n$ with the kernel 
\[
K_m(\bm{z},\bm{w}) = (1 - \langle \bm{z},\bm{w}\rangle)^{-m} I_{\cle} \quad (\bm{z}, \bm{w} \in \mathbb{B}_n).
\]
In the case of $m=1$, $H_1(\mathbb{B},\cle)$ is the $\cle$-valued Drury-Arveson space denoted by $H_{n, \cle}^2$. For $m=n$, $H_n(\mathbb{B}_n, \cle)$ is the $\cle$-valued Hardy space on $\mathbb{B}^n$ denoted by $H_{\cle}^2(\mathbb{B}_n)$. And for $m=n+1$, $H_{n+1}(\mathbb{B}, \cle)$ is the (unweighted) Bergman space on $\mathbb{B}_n$ denoted by $A_{\cle}^2(\mathbb{B}_n)$.
\end{enumerate}
\end{examples}

Here we would like to return to our discussion on left-invertible operators and wandering subspace property for rkHs's $\clh_k$. It is well known, that in the following cases, $M_z$ is a left-invertible and analytic operator satisfying the wandering subspace property on $\clh_k$.
\begin{enumerate}
\item[(i)] $M_z \in \clb(H^2(\D))$ is an isometry and hence, left-invertible (see \cite{Halmos}). \vspace{1mm}
\item[(ii)] For $-1< \alpha \leq 0$, the shift operator $M_{z} \in \clb(A_{\alpha}^2(\D))$ satisfies the operator identity $\|M_z f + g\|^2 \leq 2 (\|f\|^2 + \|M_z g\|^2)$ for $f,g \in A_{\alpha}^2(\mathbb{D})$ (see [Theorem 6.15, \cite{HKZ}). \vspace{1mm}
\item[(iii)]  $M_{z} \in \clb(\cld(\D))$ satisfies $M_z^2 {M_z}^{*2} - 2 M_z M_z^* +I = 0$, in other words, by Agler's convention $M_z$ is a $2$-isometry and it follows that $M_z$ is left-invertible (see \cite{Richter}).
\end{enumerate}

We can state the following result based on the above discussion and Corollary \ref{left_3}.

\begin{propn}
Let $\clh_k$ be the Hardy/Bergman/Dirichlet space on $\D$, then both $M_z \in \clb(\clh_k)$ and its Cauchy dual $M_z^{\prime}$ are left-invertible operators and both of them possesses the wandering subspace property on $\clh_k$.
\end{propn}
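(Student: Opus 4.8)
The plan is to assemble, case by case, the facts recorded in the discussion preceding the statement, and then to transfer everything to the Cauchy dual by a single application of Corollary \ref{left_3} in each direction; the only input not already displayed above is the elementary observation that $M_z$ is analytic on each of these spaces.

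First I would dispose of left-invertibility. On $H^2(\D)$ the operator $M_z$ is an isometry, so $M_z^*M_z = I$ is invertible and Proposition \ref{left1} gives left-invertibility; on $A^2_\alpha(\D)$ with $-1<\alpha\le 0$ it satisfies Shimorin's inequality $\|M_z f + g\|^2\le 2(\|f\|^2+\|M_z g\|^2)$, which (taking $f=0$) forces $M_z$ to be bounded below; and on $\cld(\D)$ it is a $2$-isometry, hence expansive and in particular bounded below. In all three cases Proposition \ref{left1} applies, so $M_z$ is left-invertible, the Cauchy dual $M_z' = M_z(M_z^*M_z)^{-1}$ is defined, and by the remark following the definition of the Cauchy dual $M_z'$ is itself left-invertible with $(M_z')' = M_z$. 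This settles the left-invertibility assertions.

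Next I would record that $M_z$ is analytic, i.e. $\bigcap_{m\ge 0} z^m\clh_k = \{0\}$: each of these spaces is contained in $\clo(\D)$ and contains no nonzero function vanishing to infinite order at the origin, so a function lying in $z^m\clh_k$ for every $m$ is identically zero. Together with the fact (from \cite{Halmos,Shimorin,Richter}, recalled above) that $M_z$ possesses the wandering subspace property on each of these spaces, this already yields both conclusions for $M_z$. Finally, I would apply Corollary \ref{left_3} twice: applied to $M_z$, the wandering subspace property of $M_z$ yields that $M_z'$ is analytic; applied to $M_z'$, the analyticity of $(M_z')'=M_z$ yields that $M_z'$ possesses the wandering subspace property. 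This completes the argument. There is no real obstacle here — the proof is essentially a bookkeeping exercise over the three families — and the only point needing a moment's attention is keeping the logical direction of Corollary \ref{left_3} straight, namely that ``$M_z$ has the wandering subspace property'' feeds ``$M_z'$ is analytic'', which in turn feeds ``$M_z'$ has the wandering subspace property''.
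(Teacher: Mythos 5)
Your proposal is correct and follows essentially the same route the paper intends: the paper states this result without a written proof, deriving it from the preceding discussion (left-invertibility of $M_z$ on each of the three spaces, its analyticity, and the wandering subspace property from \cite{Halmos,Shimorin,Richter}) together with Corollary \ref{left_3} applied to $M_z^{\prime}$ using $(M_z^{\prime})^{\prime}=M_z$. Your write-up merely makes explicit the small verifications (boundedness below in each case, analyticity via vanishing order at the origin) that the paper leaves to the cited references, and you have the direction of Corollary \ref{left_3} right.
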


\subsection{Multipliers}
Let $\cle_1, \cle_2$ be Hilbert spaces and $K_i: \Lambda \times \Lambda \raro \clb(\cle_i)$ be a kernel for $i=1,2$. A function $\Phi: \Lambda \raro \clb(\cle_1,\cle_2)$ is said to be a \textit{multiplier} if $\Phi f \in \clh_{K_2}(\cle_2)$ for all $f \in \clh_{K_1}(\cle_1)$. The collection of multipliers from $\clh_{K_1}(\cle_1)$ to $\clh_{K_2}(\cle_2)$ is denoted by $\mult(\clh_{K_1}(\cle_1), \clh_{K_2}(\cle_2))$. It follows from the closed graph theorem that $\Phi \in \mult(\clh_{K_1}(\cle_1), \clh_{K_2}(\cle_2))$ induces a bounded operator $M_{\Phi}$ from $\clh_{K_1}(\cle_1)$ to $\clh_{K_2}(\cle_2)$, which for all $\lambda \in \Lambda$ and $\eta \in \cle_1$ satisfies,
\[
M_{\Phi}^*(K_1(\cdot, \lambda)) = K_2(\cdot,\lambda) \Phi(\lambda)^* \eta,
\]
and the norm on $\mult(\clh_{K_1}(\cle_1), \clh_{K_2}(\cle_2))$ is defined by $\|\Phi\|:= \|M_{\Phi}\|_{op}$ (see \cite{AM} for more details). The closed unit ball of the collection of multipliers is denoted by $\mult_1(\clh_{K_1}(\cle_1), \clh_{K_2}(\cle_2))$. In the case of $K_1=K_2$ and $\cle_1 = \cle_2$, the collection of multipliers becomes a Banach algebra and is simply denoted by $\mult(\clh_{K_1}(\cle_1))$. 

There is an important class of kernels that are motivated from the classical Nevanlinna-Pick problem on the unit disc $\D$ about the existence of certain multipliers (see \cite{AM}, \cite{CH}).  

\begin{defn}
A rkhs $\clh_k$ on a set $\Lambda$ is said to be a \textit{complete Nevanlinna-Pick} space (or a \textit{cnp} space) if for all positive integers $l,m$, every collection of points $\{z_1,\ldots,z_m\} \in \Lambda$ and every choice of $l \times l$ complex matrices $W_1,\ldots,W_m$, the non-negativity of the block matrix
\[
[(k(\lambda_i,\lambda_j)(I - W_iW_j^*))]_{i,j=1}^n
\]
is sufficient for the existence of a multiplier $\Phi$ with $\|\Phi\| \leq 1$ and $\Phi(z_i) = W_i$, for $i=1,\ldots,n$.
\end{defn}
A series of fundamental results on cnp spaces is due to Agler and M\textsuperscript{C}Carthy. Building from their works, M\textsuperscript{C}Cullough$-$Quiggin discovered a tractable criterion for cnp spaces from which the following result follows (see Theorem 7.6, \cite{AM}).
\begin{thm}[M\textsuperscript{C}Cullough-Quiggin]
A normalized kernel $k$ is cnp if and only if $1-1/k \geq 0$.
\end{thm}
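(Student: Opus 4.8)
The plan is to deduce both implications from one reformulation of the positivity hypothesis. First I would record that, by the Kolmogorov factorization of positive kernels, the condition $1-1/k\geq 0$ is equivalent to the existence of a map $b\colon \Lambda \raro \clk$ into an auxiliary Hilbert space with $1-1/k(x,y)=\langle b(x),b(y)\rangle_{\clk}$, and hence
\[
k(x,y)=\frac{1}{1-\langle b(x),b(y)\rangle_{\clk}}=\sum_{m=0}^{\infty}\langle b(x),b(y)\rangle_{\clk}^{\,m}\qquad(x,y\in\Lambda).
\]
The normalization $k(\cdot,\lambda_{0})\equiv 1$ forces $b(\lambda_{0})=0$, while $\|b(x)\|^{2}=1-1/k(x,x)<1$ gives $|\langle b(x),b(y)\rangle_{\clk}|<1$, so the geometric series converges to $k$. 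Working with this realization is what makes both directions tractable.

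For sufficiency ($1-1/k\geq 0 \Raro \textup{cnp}$) I would run a lurking-isometry argument. Given nodes $\lambda_{1},\ldots,\lambda_{m}$ and matrices $W_{1},\ldots,W_{m}$ with the Pick matrix $\big[(I-W_{i}W_{j}^{*})\,k(\lambda_{i},\lambda_{j})\big]_{i,j}\geq 0$, set $G_{ij}:=\langle b(\lambda_{i}),b(\lambda_{j})\rangle$ and $P_{ij}:=(I-W_{i}W_{j}^{*})\,k(\lambda_{i},\lambda_{j})\geq 0$. Clearing the denominator $k=(1-G)^{-1}$ yields the algebraic identity
\[
P_{ij}+W_{i}W_{j}^{*}=I+G_{ij}\,P_{ij}.
\]
Since $[G_{ij}]$ is a Gram matrix, the Schur product theorem identifies the right-hand side as the Gram matrix of the family $\big(\eta,\ b(\lambda_{i})\otimes u_{i}\eta\big)$ and the left-hand side as that of $\big(u_{i}\eta,\ W_{i}^{*}\eta\big)$, where $P_{ij}=\langle u_{i},u_{j}\rangle$ is a Kolmogorov decomposition and $\eta$ runs over the coefficient space. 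Equality of these Gram matrices defines an isometry $V$ between the two families; extending $V$ to a unitary colligation $U=\begin{pmatrix}A&B\\C&D\end{pmatrix}$ and forming the transfer function $\Phi(x)=D+C\,b(x)\big(I-A\,b(x)\big)^{-1}B$ produces a multiplier with $\Phi(\lambda_{i})=W_{i}$, and contractivity $\|M_{\Phi}\|\leq 1$ is exactly the unitarity of $U$.

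For necessity ($\textup{cnp}\Raro 1-1/k\geq 0$) I would argue contrapositively on finite sets: fixing $\lambda_{1},\ldots,\lambda_{n}$ it suffices to prove $\big[1-1/k(\lambda_{i},\lambda_{j})\big]_{i,j}\geq 0$. If this matrix failed to be positive semidefinite, I would manufacture matrix-valued data $W_{1},\ldots,W_{n}$ for which the Pick matrix $\big[(I-W_{i}W_{j}^{*})\,k(\lambda_{i},\lambda_{j})\big]$ is positive semidefinite while admitting no contractive interpolant, contradicting the complete Pick property. This construction leans on the invertibility of the (irreducible) kernel matrix $[k(\lambda_{i},\lambda_{j})]$ and is precisely where completeness---the availability of operator/matrix-valued data rather than merely scalar data---is indispensable.

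The step I expect to be the main obstacle is the sufficiency direction: verifying that the isometry $V$ produced by the Gram-matrix identity extends to a unitary colligation whose transfer function is a genuine contractive multiplier interpolating the prescribed data, uniformly in the matrix size of the $W_{i}$. The algebra behind $P_{ij}+W_{i}W_{j}^{*}=I+G_{ij}P_{ij}$ is routine, but confirming that the resulting $\Phi$ is analytic, contractive, and correctly normalized is the delicate point, and it is here that the realization $k=(1-\langle b(x),b(y)\rangle)^{-1}$ does the real work.
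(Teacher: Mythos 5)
The paper does not prove this statement; it is quoted from Theorem 7.6 of \cite{AM} as a known result of M\textsuperscript{C}Cullough and Quiggin, so there is no internal proof to compare against. Judged on its own terms, your sufficiency half ($1-1/k\geq 0 \Rightarrow$ cnp) is a faithful outline of the standard Agler--M\textsuperscript{C}Carthy argument: the Kolmogorov factorization $1-1/k(x,y)=\langle b(x),b(y)\rangle$, the identity $P_{ij}+W_iW_j^*=I+G_{ij}P_{ij}$ (which is correct), the lurking isometry, and the unitary colligation are exactly the right ingredients. Two points there still need care: you must check $V$ is well defined on the \emph{span} of the first family (not just on the listed vectors), and ``contractivity is exactly the unitarity of $U$'' is too quick --- one must verify that $\bigl(I-\Phi(x)\Phi(y)^*\bigr)k(x,y)\geq 0$ as a kernel, which uses the realization $k=(1-\langle b(x),b(y)\rangle)^{-1}$ in an essential way, as you note at the end.

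The genuine gap is the necessity direction. Saying you ``would manufacture matrix-valued data $W_1,\ldots,W_n$ for which the Pick matrix is positive semidefinite while admitting no contractive interpolant'' is not a proof step; it is a restatement of what must be proved, and it is precisely the content of the M\textsuperscript{C}Cullough--Quiggin theorem. The actual construction is delicate: assuming $\bigl[1-1/k(\lambda_i,\lambda_j)\bigr]$ has a negative eigenvalue, one must exhibit explicit $n\times n$ matrices $W_i$ (built from an eigenvector for that negative eigenvalue and the invertible Gram matrix $[k(\lambda_i,\lambda_j)]$) so that $\bigl[(I-W_iW_j^*)k(\lambda_i,\lambda_j)\bigr]\geq 0$ yet no contractive multiplier interpolates, and verifying both halves of that claim occupies several lemmas in \cite{AM}. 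Without carrying this out, your argument establishes only one implication of the equivalence. You should also record the standing hypothesis that $k$ is irreducible (nonvanishing), since otherwise $1/k$ is not even defined and the linear independence needed for invertibility of $[k(\lambda_i,\lambda_j)]$ can fail.
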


From the above result, it easily follows that  $H^2(\D)$, $\cld(\D)$ and $H_n^2$ are cnp spaces. Apart from these well studied rkHs's, we are also interested in a certain type of cnp spaces on $\mathbb{B}_n$. We recommend the article \cite{CH} and its references for the following details.

\begin{defn}
A \textit{unitarily invariant} space on $\mathbb{B}_n$ is a reproducing kernel Hilbert space with a kernel of the following form:
\[
k(\bm{z},\bm{w}) = \sum_{n=0}^{\infty} a_n \langle \bm{z},\bm{w} \rangle^n \quad (\bm{z}, \bm{w} \in \mathbb{B}_n),
\]
where $\{a_n\}_n$ is a sequence of strictly positive coefficients with $a_0=1$.
\end{defn}
By definition, a unitarily invariant kernel on $\mathbb{B}_n$ is always normalized at the point $0$. And for $\alpha \in \Nat^n \setminus \{0\}$, the collection $\{(a_{|\alpha|} \frac{|\alpha|!}{\alpha !})^{\frac{1}{2}} \bm{z}^{\alpha}\}$ forms an orthonormal basis for $\clh_k$. The following is a generalization of [Theorem 7.33, \cite{AM}], and we recommend [Lemma 2.3, \cite{CH}] for a direct proof.
\begin{lemma}
Let $\clh_k$ be a unitarily invariant rkhs on $\mathbb{B}_n \subset \mathbb{C}^n$ with kernel 
\[
k(\bm{z},\bm{w}) = \sum_{n=0}^{\infty} a_n \langle \bm{z}, \bm{w}\rangle^n.
\] 
Then $\clh_k$ is an irreducible cnp space if and only if 
\[
1 - \frac{1}{k(\bm{z},\bm{w})} = \sum_{n=0}^{\infty} b_n \langle \bm{z}, \bm{w} \rangle^n \quad (\bm{z}, \bm{w} \in \mathbb{B}_n)
\]
for some sequence $\{b_n\}_n$ of non-negative numbers.
\end{lemma}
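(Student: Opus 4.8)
The plan is to deduce both implications from the M\textsuperscript{c}Cullough--Quiggin criterion quoted above (a normalized kernel $k$ is cnp iff $1-1/k\geq0$), thereby reducing the lemma to the one-variable fact that a kernel of the form $\sum_{n\geq0}c_n\langle\z,\w\rangle^n$ on $\mathbb{B}_n$ is positive semidefinite exactly when every $c_n$ is non-negative. Observe first that $k$ is normalized at the origin, since $a_0=1$ forces $k(\cdot,0)\equiv1$, so the criterion applies verbatim; set $f(t):=\sum_{n\geq0}a_nt^n$, so that $k(\z,\w)=f(\langle\z,\w\rangle)$.

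For the ``if'' direction, suppose $1-1/k=\sum_{n\geq0}b_n\langle\z,\w\rangle^n$ with every $b_n\geq0$; evaluating at $\z=\w=0$ forces $b_0=0$. Each $\langle\z,\w\rangle^n$ is a positive kernel, being a Schur power of the Gram kernel $(\z,\w)\mapsto\langle\z,\w\rangle$, so the non-negative combination $1-1/k$ is a positive kernel, and M\textsuperscript{c}Cullough--Quiggin yields that $\clh_k$ is cnp. For irreducibility, the hypothesis presents $k$ as $\big(1-\sum_{n\geq1}b_n\langle\cdot,\cdot\rangle^n\big)^{-1}$, which is finite and non-vanishing, so $k(x,y)\neq0$ for all $x,y\in\mathbb{B}_n$; and if $k_y=ck_x$ for a scalar $c$ with $x\neq y$, then evaluating at $0$ gives $c=1$, so $f(\langle\z,y\rangle)=f(\langle\z,x\rangle)$ for all $\z\in\mathbb{B}_n$. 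Taking $\z=\varepsilon u$ with $u=(y-x)/\|y-x\|$, expanding both sides as power series in the small parameter $\varepsilon$, and comparing the degree-one coefficients gives $a_1\langle u,y\rangle=a_1\langle u,x\rangle$, hence $\langle u,y-x\rangle=\|y-x\|=0$ (using $a_1>0$), a contradiction. Thus $\clh_k$ is an irreducible cnp space.

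For the ``only if'' direction, assume $\clh_k$ is an irreducible cnp space. M\textsuperscript{c}Cullough--Quiggin gives $1-1/k\geq0$. Irreducibility gives $k(x,y)\neq0$ for all $x,y$, and since $\langle x,y\rangle$ ranges over all of $\D$ as $x,y$ range over $\mathbb{B}_n$, the function $f$ is zero-free on $\D$; hence $1/f$ is holomorphic on $\D$ with $(1/f)(0)=1$, and so $1-1/k=\sum_{n\geq1}b_n\langle\z,\w\rangle^n$ on $\mathbb{B}_n\times\mathbb{B}_n$, where the $b_n$ are the Taylor coefficients of $1-1/f$ at $0$ (real, since $f$ has real coefficients) and the series has radius of convergence at least $1$. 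The remaining point, and the heart of the matter, is that $b_n\geq0$: to see this, restrict the positive kernel $1-1/k$ to the one-dimensional slice $\{(\zeta,0,\ldots,0):\zeta\in\D\}\subset\mathbb{B}_n$; restriction preserves positivity, so $L(\zeta,\eta):=\sum_{n\geq1}b_n(\zeta\bar\eta)^n$ is a positive kernel on $\D$. For fixed $N\geq1$ and $r\in(0,1)$, the Gram matrix of $L$ on the points $r\omega^0,\ldots,r\omega^{N-1}$ with $\omega=e^{2\pi i/N}$ is a positive semidefinite circulant matrix, and diagonalizing it by the finite Fourier transform shows its eigenvalues to be $N\sum_{j\geq0}b_{\ell+jN}\,r^{2(\ell+jN)}\geq0$ for $\ell=0,\ldots,N-1$. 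Fixing $\ell\geq1$, choosing $N>\ell$, and letting $N\to\infty$, the tail $\sum_{j\geq1}b_{\ell+jN}\,r^{2(\ell+jN)}$ vanishes by absolute convergence of $\sum_n|b_n|r^{2n}$ (valid because the radius of convergence is $\geq1>r$), leaving $b_\ell r^{2\ell}\geq0$, i.e. $b_\ell\geq0$. This circulant/Fourier computation — the same mechanism underlying [Theorem 7.33, \cite{AM}] — is the only non-formal ingredient; everything else is the M\textsuperscript{c}Cullough--Quiggin theorem together with the elementary slice reduction and routine power-series bookkeeping.
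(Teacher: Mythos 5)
Your proof is correct. Note that the paper itself gives no argument for this lemma: it is stated as a known result, a generalization of [Theorem 7.33, \cite{AM}], with the reader referred to [Lemma 2.3, \cite{CH}] for a direct proof. What you have written is essentially that standard proof, supplied in full: the reduction via the M\textsuperscript{c}Cullough--Quiggin criterion to the statement that a kernel $\sum_n c_n\langle \bm{z},\bm{w}\rangle^n$ is positive semidefinite exactly when all $c_n\geq 0$, with the ``only if'' half of that statement extracted by a Fourier-type averaging over the circle action (your circulant-matrix computation is a discretized version of the torus-averaging argument used in \cite{AM} and \cite{CH}). The individual steps all check out: the $b_0=0$ normalization, the Schur-power positivity of $\langle\bm{z},\bm{w}\rangle^n$ for the ``if'' direction, the first-order Taylor comparison with $a_1>0$ for irreducibility, the observation that $\langle x,y\rangle$ exhausts $\mathbb{D}$ so that $f$ is zero-free and $1-1/f$ has a Taylor expansion of radius at least $1$, and the tail estimate that isolates $b_\ell r^{2\ell}\geq 0$ as $N\to\infty$. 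The only point worth making explicit is that in the ``if'' direction the identity $1-1/k=\sum_n b_n\langle\bm{z},\bm{w}\rangle^n$ already presupposes that $k$ is non-vanishing on $\mathbb{B}_n\times\mathbb{B}_n$; you use this implicitly when asserting $k(x,y)\neq 0$, and it is harmless, but a sentence acknowledging it would tighten the argument.
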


An intrinsic problem that is associated to any rkHs is a description for $M_z$-invariant closed subspaces. The first of such a characterization is due to Beurling \cite{Beurling} in the case of $M_z$-invariant clsoed subspaces of $H^2(\D)$. This remarkable result holds true even for $A^2(\D)$ as proved by Aleman, Richter and Sundberg in \cite{ARS} and also for $\cld(\D)$ as proved by Richter (see \cite{Richter}). In particular, Beurling's theorem extracts a significant information about the subspace. 
\begin{thm}[Beurling]
$\cls$ is a $M_{z}$-invariant closed subspace of $H^2(\D)$ if and only if $\cls = \theta H^2(\D)$ for some inner function $\theta \in H^{\infty}(\D)$ (that is, $|\theta(\lambda)| = 1$ a.e. on $\mathbb{T}$).
\end{thm}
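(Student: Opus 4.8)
The plan is to split into the two implications, with essentially all the substance in the necessity direction. For sufficiency, suppose $\cls=\theta H^2(\D)$ with $\theta$ inner. Since $|\theta|=1$ a.e.\ on $\mathbb{T}$, multiplication by $\theta$ is an isometry on $H^2(\D)$ (viewed inside $L^2(\mathbb{T})$ through boundary values), so $\cls=M_\theta H^2(\D)$ is closed, and $M_z\cls=\theta\cdot zH^2(\D)\subseteq\theta H^2(\D)=\cls$. This part is routine.

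For necessity, let $\cls\neq\{0\}$ be closed and $M_z$-invariant, and put $V:=M_z|_{\cls}$, which is an isometry on $\cls$. The first observation is that $V$ is a pure (analytic) isometry: $\bigcap_{m\ge 0}V^m\cls=\bigcap_{m\ge 0}z^m\cls\subseteq\bigcap_{m\ge 0}z^mH^2(\D)=\{0\}$, because a function in $z^mH^2(\D)$ vanishes to order $m$ at the origin. Hence the Wold--von Neumann decomposition of $V$ has trivial unitary part and gives $\cls=\bigoplus_{m\ge 0}z^m\clw(V)$, where $\clw(V)=\cls\ominus z\cls\neq\{0\}$.

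The heart of the argument is then a function-theoretic analysis of $\clw(V)$ inside $L^2(\mathbb{T})$. I would fix a unit vector $\theta\in\clw(V)$. For every $m\ge 1$ one has $z^m\theta\in z\cls$ while $\theta\perp z\cls$, so the Fourier coefficients of $|\theta|^2\in L^1(\mathbb{T})$ satisfy $\widehat{|\theta|^2}(m)=\langle z^m\theta,\theta\rangle=0$ for $m\ge 1$, and by conjugation also for $m\le -1$; thus $|\theta|^2$ equals its mean $\|\theta\|^2=1$ a.e., i.e.\ $\theta$ is inner. The same computation applied to two orthonormal vectors $\theta_1,\theta_2\in\clw(V)$ shows every Fourier coefficient of $\theta_1\overline{\theta_2}$ vanishes except possibly the zeroth, which is $\langle\theta_1,\theta_2\rangle=0$; hence $\theta_1\overline{\theta_2}=0$ a.e., which is incompatible with $|\theta_1|=1$ a.e.\ unless $\theta_2=0$. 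So $\clw(V)=\Comp\theta$ is one-dimensional.

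Finally $\cls=\bigoplus_{m\ge 0}z^m(\Comp\theta)=\bigvee_{m\ge 0}z^m\theta$, the closure of $\theta$ times the polynomials; since $M_\theta$ is an isometry it carries $H^2(\D)$ (the closure of the polynomials) onto this closure, so $\cls=\theta H^2(\D)$, as required. I expect the main obstacle to be the step that converts the abstract wandering subspace into function theory: extracting both ``$|\theta|=1$ a.e.''\ and ``$\dim\clw(V)=1$'' from the single orthogonality relation $\theta\perp z\cls$. Everything before that is soft operator theory, and the closing step is bookkeeping with the isometry $M_\theta$. (One can even bypass the dimension count: once $\theta$ is inner with $\theta H^2(\D)\subseteq\cls$, any $g\in\cls$ with $g\perp\theta H^2(\D)$ has $\widehat{g\bar\theta}(k)=0$ for $k\ge 0$ from $g\perp z^k\theta$, and $\widehat{g\bar\theta}(k)=0$ for $k\le -1$ since $\theta\perp z^{-k}g\in z\cls$; thus $g\bar\theta=0$ a.e.\ and $g=0$ because $|\theta|=1$ a.e.)
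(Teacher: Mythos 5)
Your argument is correct, and it is the classical Wold-decomposition proof of Beurling's theorem that the paper itself attributes to Halmos in Section 2 (the paper states the theorem without proof, citing \cite{Beurling} and \cite{Halmos}). Each step checks out: $M_z|_{\cls}$ is an analytic isometry, the wandering subspace is shown to consist of inner functions and to be one-dimensional via the vanishing Fourier coefficients of $|\theta|^2$ and $\theta_1\overline{\theta_2}$, and the isometry $M_\theta$ closes the argument; the only cosmetic caveat is that the trivial subspace $\cls=\{0\}$ is $M_z$-invariant but not of the form $\theta H^2(\D)$, which you correctly sidestep by assuming $\cls\neq\{0\}$.
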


Results like above rarely holds in higher dimensions, for instance, Rudin showed that even in $H^2(\D^2)$ such a result does not exist (see \cite{Rudin}). However, in the case of unit ball there are several rkHs's which admit Beurling-type characterization of invariant subspaces. A recent result by Clou\^atre et al. in \cite{CHS} proved that rkHs's with cnp factors admit this property. The list of references in \cite{CHS} gives a brief account of several earlier results in this direction. For our interest, we combine [Theorem 1.1, Proposition 2.5, \cite{CHS}] to obtain the following result. 

\begin{thm}[\textit{Clou\^atre, Hartz and Schillo}]\label{Beurling_cnp}
Let $k$ be a kernel on $\mathbb{B}_n$ and let $s$ be a complete Nevanlinna-Pick kernel on $\mathbb{B}_n$ such that $k/s \geq 0$. Let $\cle$ be a Hilbert space and let $\clm$ be a non-zero closed subspace of $\clh_k \otimes \cle$. Then the following are equivalent:
\begin{enumerate}
\item[(i)] The subspace $\clm$ is $M_{\bm{z}}$-invariant,
\item[(ii)] There exists an auxillary space $\clf$ and a partially isometric multiplier \\ $\Gamma(\bm{z}) \in \mbox{Mult}(\clh_s \otimes \clf, \clh_k \otimes \cle)$ such that $
\clm = \Gamma(\bm{z}) (\clh_s \otimes \clf)$.
\end{enumerate}
\end{thm}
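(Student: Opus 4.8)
The plan is to prove the two implications by quite different means, with essentially all of the work going into the construction of $\Gamma$ from an abstract invariant subspace. The implication (ii) $\Raro$ (i) is immediate: any $\Gamma \in \mult(\clh_s \ot \clf, \clh_k \ot \cle)$ intertwines the coordinate shifts, $M_{z_i} M_\Gamma = M_\Gamma M_{z_i}$ for each $i$, so $M_{\bm{z}}$ carries $\clm = M_\Gamma(\clh_s \ot \clf)$ into itself. For (i) $\Raro$ (ii) I would first pass to the reproducing kernel of $\clm$. Writing $P = P_\clm$ and $k_{\bm{w}}\eta := k(\cd,\bm{w}) \ot \eta$ for the kernel vectors of $\clh_k \ot \cle$, let $F$ be the $\clb(\cle)$-valued function determined by $\langle F(\bm{z},\bm{w})\eta,\xi\rangle = \langle P(k_{\bm{w}}\eta), k_{\bm{z}}\xi\rangle$; this is the reproducing kernel of $\clm$, and since $I - P$ is also a projection the complementary kernel $k\,I_\cle - F$ is automatically positive semi-definite. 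The crux of the whole argument is the dichotomy that \emph{$\clm$ is $M_{\bm{z}}$-invariant if and only if the quotient kernel $F/s$ is positive semi-definite as a $\clb(\cle)$-valued kernel}. Granting this, a Kolmogorov factorization yields an auxiliary space $\clf$ and $\Gamma(\bm{z}) \in \clb(\clf,\cle)$ with $F(\bm{z},\bm{w})/s(\bm{z},\bm{w}) = \Gamma(\bm{z})\Gamma(\bm{w})^*$, hence $F(\bm{z},\bm{w}) = \Gamma(\bm{z})\, s(\bm{z},\bm{w})\, \Gamma(\bm{w})^*$. Because $k\,I_\cle - \Gamma s \Gamma^* = k\,I_\cle - F \geq 0$, this identity is exactly the multiplier criterion certifying $\Gamma \in \mult(\clh_s \ot \clf, \clh_k \ot \cle)$; and comparing the two sides on the total set $\{k_{\bm{w}}\xi\}$ shows $M_\Gamma M_\Gamma^* = P$. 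An operator with $M_\Gamma M_\Gamma^*$ a projection is a partial isometry, and $\ran M_\Gamma = \ran (M_\Gamma M_\Gamma^*) = \clm$, which is precisely (ii).

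The heart of the matter, and the step where the complete Nevanlinna--Pick hypothesis on $s$ is indispensable, is the forward direction of the dichotomy. By the M\textsuperscript{C}Cullough--Quiggin theorem I normalize $s$ and write $1/s = 1 - u$ with $u \geq 0$; the cnp realization of $s$ lets me factor $u(\bm{z},\bm{w}) = \sum_i \psi_i(\bm{z})\overline{\psi_i(\bm{w})}$ with each $\psi_i$ a contractive multiplier of $\clh_s$. Since $k/s \geq 0$, each $\psi_i$ is then also a contractive multiplier of $\clh_k$, because $(1 - \psi_i \overline{\psi_i})\,k = (1 - \psi_i\overline{\psi_i})\,s \cdot (k/s) \geq 0$; consequently $M_{\psi_i}$ acts on $\clh_k \ot \cle$ and $\sum_i M_{\psi_i} M_{\psi_i}^* \leq I$. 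Under the dictionary that multiplication of the kernel $F$ by $\psi_i(\bm{z})\overline{\psi_i(\bm{w})}$ corresponds to the operator $M_{\psi_i} P M_{\psi_i}^*$, the positivity of $F/s = F - Fu$ is equivalent to the operator inequality $P - \sum_i M_{\psi_i} P M_{\psi_i}^* \geq 0$. Now a closed $M_{\bm{z}}$-invariant subspace is invariant for every multiplier, so $M_{\psi_i} P = P M_{\psi_i} P$ for all $i$; writing $S := \sum_i M_{\psi_i} P M_{\psi_i}^*$ this gives $S = P S P$, and combined with $S \leq \sum_i M_{\psi_i} M_{\psi_i}^* \leq I$ it follows that $S = P S P \leq P I P = P$, which is exactly the required inequality.

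I expect the main obstacle to be the rigorous handling of this positivity step: selecting the factorizing functions $\psi_i$ as genuine multipliers of $\clh_s$ (this is where one must invoke the cnp structure and not merely positivity of $F$, which is far too weak), and justifying the kernel-to-operator dictionary and the convergence of the possibly infinite sums $\sum_i M_{\psi_i} P M_{\psi_i}^*$ in a topology strong enough to preserve positivity. A clean prototype in which every technical point trivializes is the unitarily invariant case, where the preceding lemma gives $u(\bm{z},\bm{w}) = \sum_{m \geq 1} b_m \langle \bm{z},\bm{w}\rangle^m$ with $b_m \geq 0$ and the dictionary reduces to the monomial compressions $M_{\bm{z}}^{\bm{\alpha}} P M_{\bm{z}}^{*\bm{\alpha}}$. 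An alternative and conceptually tidy route, closer to the cited combination of results, is to use the factorization $k = s \cdot (k/s)$ to realize $\clh_k \ot \cle$ as the image of $\clh_s \ot \clg \ot \cle$ under a coisometric multiplier, pull $\clm$ back to an honestly $M_{\bm{z}}$-invariant subspace there, apply the Beurling theorem already known for the single cnp kernel $s$, and transport the resulting partially isometric multiplier back down; the delicate point in that approach is checking that the partial-isometry property survives the transport across the quotient.
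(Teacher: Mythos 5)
The paper does not actually prove this statement: it is imported from Clou\^atre--Hartz--Schillo by combining [Theorem 1.1, \cite{CHS}] (the Beurling--Lax--Halmos theorem for spaces with a complete Nevanlinna--Pick factor) with [Proposition 2.5, \cite{CHS}] (which converts $M_{\bm{z}}$-invariance into invariance under the relevant multipliers). Your proposal is therefore not competing with an argument in the paper but reconstructing the cited one, and it does so essentially correctly: the reduction to positivity of the quotient kernel $F/s$, the Kolmogorov factorization $F = \Gamma s \Gamma^*$, the multiplier criterion $k I_{\cle} - \Gamma s \Gamma^* \geq 0$, and the verification $M_{\Gamma}M_{\Gamma}^* = P_{\clm}$ on the total set of kernel vectors are exactly the mechanism of \cite{CHS}, and your operator inequality $S = PSP \leq P$ with $S = \sum_i M_{\psi_i} P M_{\psi_i}^*$ is the right way to extract $F/s \geq 0$ from invariance.

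The one step you assert without proof --- ``a closed $M_{\bm{z}}$-invariant subspace is invariant for every multiplier'' --- is precisely the content of [Proposition 2.5, \cite{CHS}] and is the only genuine gap. It is not automatic for an abstract kernel on $\mathbb{B}_n$: the functions $\psi_i$ coming from the factorization $1 - 1/s = \sum_i \psi_i(\bm{z})\overline{\psi_i(\bm{w})}$ are multipliers of $\clh_s$ (and hence of $\clh_k$, by the Schur-product argument you give), but they are generally not polynomials, so invariance under $M_{z_1},\ldots,M_{z_n}$ yields invariance under $M_{\psi_i}$ only after an approximation argument --- for instance WOT-approximation of $M_{\psi_i}$ by polynomial multipliers, which is easy for unitarily invariant spaces via Ces\`aro means but needs the hypotheses of the cited proposition in general. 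Everything else checks out: the identity $(1-u)s = 1 \geq 0$ does certify that the row $(\psi_i)_i$ is a contractive multiplier of $\clh_s$ into itself, the hypothesis $k/s \geq 0$ transfers this to $\clh_k$, the partial sums of $\sum_i M_{\psi_i}M_{\psi_i}^*$ increase to a limit $\leq I$ in the strong topology so positivity survives, and the kernel-to-operator dictionary identifying $\psi_i(\bm{z})F(\bm{z},\bm{w})\overline{\psi_i(\bm{w})}$ with $M_{\psi_i}PM_{\psi_i}^*$ is legitimate because the kernel vectors are total. Your alternative closing suggestion (pulling $\clm$ back through a coisometric multiplier realizing $k = s\cdot(k/s)$) is also viable but, as you note, transporting the partial-isometry property back down is the delicate point; the direct kernel argument is cleaner.
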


\subsection{Wandering subspaces} 
In this subsection, we are interested in establishing few results on wandering subspaces associated to certain tuples of operators. Let us begin by observing that wandering subspace associated to analytic tuples must be non-zero.

\begin{propn}\label{wander_opt}
Let $X=(X_1,\ldots,X_n)$ be a tuple of commuting operators on $\clh$ such that $\bigcap_{l=0}^{\infty} \sum_{|\bm{m}|=l} X^{\bm{m}} \clh = \{0\}$ and let $\clm$ be a $X$-joint invariant closed subspace of $\clh$. Then $\clm \neq \{0\}$ if and only if $\clw(X|_{\clm}) \neq \{0\}$. 
\end{propn}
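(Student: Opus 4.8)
The plan is to handle the two implications separately. The implication ``$\clw(X|_\clm)\neq\{0\}\Longrightarrow\clm\neq\{0\}$'' is immediate, because $\clw(X|_\clm)$ is by construction a subspace of $\clm$. So the substance is the reverse implication, which I would prove in contrapositive form: assuming $\clw(X|_\clm)=\{0\}$, I will show $\clm=\{0\}$. Since $\clm$ is $X$-joint invariant, the restriction $X|_\clm:=(X_1|_\clm,\ldots,X_n|_\clm)$ is a commuting tuple of operators on $\clm$, and by the definition of the wandering subspace,
\[
\clw(X|_\clm)=\bigcap_{i=1}^n\ker\big((X_i|_\clm)^*\big)=\clm\ominus\sum_{i=1}^n X_i\clm .
\]
Hence the hypothesis $\clw(X|_\clm)=\{0\}$ says precisely that $\sum_{i=1}^n X_i\clm$ is dense in $\clm$.

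The heart of the argument is the claim that, under this density hypothesis, $\clm=\overline{\sum_{|\bm{m}|=l}X^{\bm{m}}\clm}$ for every $l\geq0$; I would prove it by induction on $l$, the cases $l=0,1$ being clear. For the passage from $l$ to $l+1$ I would use three elementary facts: each $X_i$ is bounded, so $X_i\overline{\clk}\subseteq\overline{X_i\clk}$ for any subspace $\clk$; the sum of the closures of finitely many subspaces is contained in the closure of their sum; and commutativity of $X$ guarantees that every monomial $X^{\bm{m}}$ with $|\bm{m}|=l+1$ factors as $X_iX^{\bm{m}'}$ with $|\bm{m}'|=l$. Combining these,
\[
\clm=\overline{\sum_{i=1}^n X_i\clm}=\overline{\sum_{i=1}^n X_i\,\overline{\sum_{|\bm{m}|=l}X^{\bm{m}}\clm}}\subseteq\overline{\sum_{|\bm{m}|=l+1}X^{\bm{m}}\clm}\subseteq\clm ,
\]
the last inclusion being $X$-invariance of $\clm$; this forces equality and completes the induction.

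Granting the claim, we get $\clm\subseteq\overline{\sum_{|\bm{m}|=l}X^{\bm{m}}\clh}$ for every $l$, hence $\clm\subseteq\bigcap_{l\geq0}\overline{\sum_{|\bm{m}|=l}X^{\bm{m}}\clh}$, which is $\{0\}$ by the analyticity hypothesis on $X$; therefore $\clm=\{0\}$, as desired. The one point requiring real care is the bookkeeping with closures in the inductive step: one must close up the algebraic sums at each stage rather than assume the ranges $\sum_{|\bm{m}|=l}X^{\bm{m}}\clh$ are already closed — they are in all the concrete settings to which the result is applied, e.g. tuples of left-invertible operators — together with the opening observation that the wandering subspace of the restricted tuple is computed inside $\clm$. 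Everything else is purely formal.
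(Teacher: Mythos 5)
Your proof is correct and follows essentially the same route as the paper's: the paper likewise reduces to showing that $\clw(X|_{\clm})=\{0\}$ forces $\clm=\bigvee_{|\bm{m}|\ge l}X^{\bm{m}}\clm$ for every $l$ (which, by $X$-invariance of $\clm$, is exactly your $\overline{\sum_{|\bm{m}|=l}X^{\bm{m}}\clm}$), and then invokes the analyticity hypothesis. The closure bookkeeping you flag is handled there by the preliminary identity $\bigvee_{|\bm{m}|\ge l}X^{\bm{m}}(\cls)=\bigvee_{|\bm{m}|\ge l}X^{\bm{m}}(\overline{\cls})$, and both write-ups share the same harmless elision between $\sum_{|\bm{m}|=l}X^{\bm{m}}\clh$ and its closure in the final step.
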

\begin{proof}
It is obvious that $\clm= \{0\}$ implies that $\clw(X|_{\clm}) = \{0\}$. For proving the converse part, let us first observe that for any tuple of operators $X=(X_1,\ldots,X_n)$ on $\clh$, a subset $\cls \subset \clh$ and any $l \in \Nat$, we have the following
\[
\bigvee_{\underset{|\bm{m}| \geq l}{\bm{m} \in \Nat^n}} X^{\bm{m}} (\cls) = \bigvee_{\underset{|\bm{m}| \geq l}{\bm{m} \in \Nat^n}} X^{\bm{m}} (\overline{\cls}).
\]
Now, $\clw(X|_{\clm}) = \{0\}$, implies that $\clm = \overline{X_1 \clm + \ldots + X_n \clm}$ and therefore, for any $m \in \Nat$ we have
\[
\clm = \bigvee_{\bm{m}\in \Nat^n} X^{\bm{m}} (\clm) = \bigvee_{\underset{|\bm{m}| \geq 1} {\bm{m}\in \Nat^n}} X^{\bm{m}} (\clm) = \ldots =  \bigvee_{\underset{|\bm{m}| \geq l} {\bm{m}\in \Nat^n}} X^{\bm{m}} (\clm).
\]
Thus, for any $h \in \clm$ we always get that $h \in \bigcap_{l=0}^{\infty} \sum_{|\bm{m}|=l} X^{\bm{m}} \clh$ and the latter subspace is $\{0\}$ by our assumption. In other words, $\clm = \{0\}$. This completes the proof.
\end{proof}

Let us end this section by proving a generalization of  [Theorem 5.2, \cite{BEKS}], that gives an explicit description of wandering subspaces in certain cases.

\begin{lemma}\label{lemma_wander}
Let $A=(A_1,\ldots,A_n)$ and $B=(B_1,\ldots,B_n)$ be $n$-tuple of commuting operators on Hilbert spaces $\clh_1$ and $\clh_2$, respectively. Suppose, there exists a partial isometry $\Pi: \clh_1 \raro \clh_2$ such that $\Pi A_i = B_i \Pi$ for $i \in \{1,\ldots,n\}$. Then $\cls = \Pi \clh_1$ is a $B$-invariant closed subspace of $\clh_2$ and moreover,
\[
\clw(B|_{\cls}) := \cls \ominus \sum_{i=1}^n B_i \cls = \Pi ((\ker \Pi)^{\perp} \cap \clw(A)).
\]
\end{lemma}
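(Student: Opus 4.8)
The plan is to first verify the easy structural claims and then compute the wandering subspace by a direct orthogonality argument. Since $\Pi$ is a partial isometry, $\cls = \Pi\clh_1 = \Pi((\ker\Pi)^\perp)$ is closed (a partial isometry restricted to the orthogonal complement of its kernel is an isometry onto its range, and isometric images of Hilbert spaces are closed). The intertwining relations $\Pi A_i = B_i\Pi$ immediately give $B_i\cls = B_i\Pi\clh_1 = \Pi A_i\clh_1 \subseteq \Pi\clh_1 = \cls$, so $\cls$ is $B$-joint invariant; hence $\clw(B|_\cls) = \cls \ominus \sum_{i=1}^n B_i\cls$ makes sense. It is convenient throughout to write $\clh_1' := (\ker\Pi)^\perp$, on which $\Pi$ acts isometrically, so that $\Pi$ identifies $\clh_1'$ with $\cls$ and $\langle \Pi f, \Pi g\rangle = \langle f, g\rangle$ for $f,g \in \clh_1'$.

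The main computation is to show $\clw(B|_\cls) = \Pi(\clh_1' \cap \clw(A))$. First I would prove the inclusion $\supseteq$: take $f \in \clh_1' \cap \clw(A)$, so $f \perp A_i\clh_1$ for all $i$. Then for any $i$ and any $h \in \cls$, write $h = \Pi g$ with $g \in \clh_1'$; we have $\langle \Pi f, B_i\Pi g\rangle = \langle \Pi f, \Pi A_i g\rangle = \langle f, A_i g\rangle = 0$ since $f \in \clw(A)$ — here the middle equality uses that $f$ and $A_i g$ both... wait, $A_i g$ need not lie in $\clh_1'$. I would handle this by instead using $\langle \Pi f, \Pi A_i g\rangle = \langle P_{\clh_1'} f, A_i g\rangle_{\text{after adjusting}}$; more carefully, since $\Pi^*\Pi = P_{\clh_1'}$, one has $\langle \Pi f, \Pi A_i g\rangle = \langle \Pi^*\Pi f, A_i g\rangle = \langle P_{\clh_1'}f, A_i g\rangle = \langle f, A_i g\rangle = 0$ because $f \in \clh_1'$ and $f \in \clw(A)$. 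So $\Pi f \perp B_i\cls$ for all $i$, giving $\Pi f \in \clw(B|_\cls)$.

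For the reverse inclusion $\subseteq$: take $h \in \clw(B|_\cls)$, and write $h = \Pi f$ with $f \in \clh_1'$ (this $f$ is unique since $\Pi|_{\clh_1'}$ is injective). I must show $f \in \clw(A)$, i.e. $f \perp A_i\clh_1$ for every $i$. For arbitrary $x \in \clh_1$, decompose $x = x' + x''$ with $x' \in \clh_1'$, $x'' \in \ker\Pi$; then $\langle f, A_i x\rangle = \langle P_{\clh_1'} f, A_i x\rangle = \langle \Pi^*\Pi f, A_i x\rangle = \langle \Pi f, \Pi A_i x\rangle = \langle \Pi f, B_i\Pi x\rangle = \langle h, B_i\Pi x\rangle$, and $\Pi x \in \cls$, so $B_i\Pi x \in B_i\cls$ and the pairing vanishes because $h \in \clw(B|_\cls)$. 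Hence $f \perp A_i\clh_1$ for all $i$, so $f \in \clh_1' \cap \clw(A)$ and $h = \Pi f \in \Pi(\clh_1' \cap \clw(A))$. Combining the two inclusions gives the claimed equality.

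The one genuine subtlety — and the step I would be most careful about — is the repeated use of the identity $\Pi^*\Pi = P_{(\ker\Pi)^\perp}$ for a partial isometry, which lets me move between inner products in $\clh_1$ and in $\clh_2$ even when the relevant vectors ($A_i g$, $A_i x$) do not a priori lie in $(\ker\Pi)^\perp$; everything else is formal manipulation with the intertwining relations. I would also note explicitly that $\Pi(\clh_1' \cap \clw(A))$ is closed, since it is the isometric image (under $\Pi|_{\clh_1'}$) of the closed subspace $\clh_1' \cap \clw(A)$ of $\clh_1$, which is consistent with $\clw(B|_\cls)$ being closed by definition.
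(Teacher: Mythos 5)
Your proof is correct and follows essentially the same route as the paper's: both arguments rest on the partial-isometry identity $\Pi^*\Pi = P_{(\ker\Pi)^{\perp}}$ together with the intertwining relation to translate the defining condition of $\clw(B|_{\cls})$ into the defining condition of $\clw(A)$. The only difference is cosmetic — the paper phrases the computation via adjoints ($P_{\cls}B_i^*f=0$ iff $A_i^*g=0$), while you phrase it as orthogonality to the ranges $B_i\cls$ and $A_i\clh_1$ and split the equality into two inclusions, which if anything makes the argument slightly more explicit.
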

\begin{proof}
Note that by definition, $\clw(B|_{\cls}) = \bigcap_{i=1}^n \ker P_{\cls}B_i^*|_{\cls}$. Since $\Pi$ is a partial isometry, $f \in \cls$ implies that $f= \Pi g$, for some $g \in \ran \Pi^*$. Furthermore, 
\begin{align*}
f \in \bigcap_{i=1}^n \ker P_{\cls}B_i^*|_{\cls} \Leftrightarrow  B_i^* f \perp \cls \quad (\text{ for all }i \in \{1,\ldots,n\}),
\end{align*}
In other words, $\Pi^* B_i^* f =0$ for all $i \in \{1,\ldots,n\}$. Again, 
\[
A_i^*g = A_i^* \Pi^*  \Pi g =  \Pi^* B_i^* f \quad (i \in \{1,\ldots,n\}),
\]
implies that $\Pi^* B_i^* f =0$  if and only if $A_i^*g = 0$ for all $i \in \{1,\ldots,n\}$. Thus, $g \in \clw(A)$ and therefore, $\clw(B|_{\cls}) = \Pi ((\ker \Pi)^{\perp} \cap \clw(A))$. This completes the proof.
\end{proof}

\section{Pure contractions commuting with left-invertible operators}\label{Sec3}
This section consists of three sub-sections. In the first sub-section, we prove Theorem \ref{main1}. In the later sub-sections, we apply this result to obtain our main characterization on multipliers of several reproducing kernel Hilbert spaces on $\mathbb{D}^n$.

Let us begin with a result on commuting orthogonal projections (see [Lemma 2.5,\cite{jaydeb1}])
\begin{lemma}\label{sum_projn}
If $(P_1,\ldots,P_n)$ is a $n$-tuple of commuting orthogonal projections on a Hilbert space $\clh$, then $\cll = \sum_{i=1}^n \ran P_i$ is a closed subspace and moreover,
\[
P_{\cll} = I_{\clh} - \prod_{i=1}^n (I - P_i) =  \bigoplus_{i=1}^n P_i \prod_{j > i }^n (I_{\clh} - P_j).
\]
\end{lemma}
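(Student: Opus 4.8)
The plan is to reduce the entire statement to two elementary facts: that a finite product of commuting orthogonal projections is again an orthogonal projection, and a purely algebraic telescoping identity. First I would record that if $Q_1,Q_2$ are commuting orthogonal projections then $Q_1Q_2$ is an orthogonal projection, since $(Q_1Q_2)^*=Q_2Q_1=Q_1Q_2$ and $(Q_1Q_2)^2=Q_1^2Q_2^2=Q_1Q_2$; by induction the same holds for any finite commuting family. Applying this to the commuting family $\{I-P_i\}$, the operator $Q:=\prod_{i=1}^n(I-P_i)$ is an orthogonal projection, and hence so is $I-Q$. Since $\ran(I-P_i)=\ker P_i$, one computes $\ran Q=\bigcap_{i=1}^n\ker P_i$, so $\ran(I-Q)=(\bigcap_i\ker P_i)^\perp=\overline{\sum_i(\ker P_i)^\perp}=\overline{\sum_i\ran P_i}=\overline{\cll}$.

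Next I would prove the telescoping identity $I-\prod_{i=1}^n(I-P_i)=\sum_{i=1}^nP_i\prod_{j>i}(I-P_j)$. Writing $R_k:=\prod_{j=k}^n(I-P_j)$ with $R_{n+1}:=I$, the recursion $R_k=R_{k+1}-P_kR_{k+1}$ gives $R_{k+1}-R_k=P_kR_{k+1}$, and summing over $k=1,\dots,n$ telescopes to the claimed identity. This identity already yields the second equality in the statement once orthogonality of the summands is checked. More importantly, each summand $E_i:=P_i\prod_{j>i}(I-P_j)$ satisfies $\ran E_i\subseteq\ran P_i$ (because $P_i$ commutes with the product and appears as the left factor), so $I-Q=\sum_iE_i$ maps $\clh$ into $\sum_i\ran P_i=\cll$; thus $\ran(I-Q)\subseteq\cll$. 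The reverse inclusion $\cll\subseteq\ran(I-Q)$ follows because for $x\in\ran P_i$ we have $(I-P_i)x=0$, hence $Qx=0$ and $(I-Q)x=x$. Combining the inclusions gives $\cll=\ran(I-Q)$, which is closed (being the range of an orthogonal projection) and carries $I-Q$ as its orthogonal projection; this proves that $\cll$ is closed and that $P_{\cll}=I-\prod_i(I-P_i)$.

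Finally, to justify the orthogonal-direct-sum notation I would verify that the $E_i$ are mutually orthogonal orthogonal projections. Each $E_i$ is a product of the commuting orthogonal projections $P_i,I-P_{i+1},\dots,I-P_n$ and hence an orthogonal projection by the first step. For $i<i'$ the factor $I-P_{i'}$ occurs in $E_i$ while $P_{i'}$ occurs in $E_{i'}$, and since $(I-P_{i'})P_{i'}=0$ with all factors commuting, we get $E_iE_{i'}=0$; hence the ranges are pairwise orthogonal and $P_{\cll}=\sum_iE_i=\bigoplus_iE_i$. I expect the only genuinely delicate point to be the closedness of $\cll$, i.e. establishing the set equality $\cll=\ran(I-Q)$ rather than merely $\overline{\cll}=\ran(I-Q)$; the telescoping identity is exactly what supplies the nontrivial inclusion $\ran(I-Q)\subseteq\cll$, so getting that identity right is the heart of the argument, while the remaining verifications are routine manipulations with commuting projections.
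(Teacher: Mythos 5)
Your proof is correct and complete. Note, however, that the paper itself gives no proof of this lemma: it is quoted from [Lemma 2.5, \cite{jaydeb1}] and used as a known fact, so there is no internal argument to compare yours against. Your route is the standard one and every step checks out: the product $Q=\prod_{i}(I-P_i)$ of commuting orthogonal projections is an orthogonal projection with $\ran Q=\bigcap_i\ker P_i$; the telescoping identity $I-Q=\sum_i P_i\prod_{j>i}(I-P_j)$ follows from the recursion $R_{k+1}-R_k=P_kR_{k+1}$; the two inclusions $\ran(I-Q)\subseteq\sum_i\ran P_i$ (from the telescoping sum) and $\ran P_i\subseteq\ran(I-Q)$ (from $Qx=0$ for $x\in\ran P_i$) together give $\cll=\ran(I-Q)$, which settles both closedness and the identification $P_{\cll}=I-Q$; and the mutual orthogonality $E_iE_{i'}=0$ for $i<i'$ via the factor $(I-P_{i'})P_{i'}=0$ justifies the direct-sum notation. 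You correctly identify the one genuinely non-routine point, namely that the telescoping identity is what upgrades $\overline{\cll}=\ran(I-Q)$ to the set equality $\cll=\ran(I-Q)$ and hence proves closedness of the algebraic sum.
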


\begin{defn}
A tuple of commuting operators $X=(X_1,\ldots,X_n)$ on $\clh$ is said to be \textit{doubly commuting} if $X_i^*X_j= X_jX_i^*$ for $i,j \in \{1,\ldots,n\}$ such that $i \neq j$.
\end{defn}

%
\textsf{For the purpose of our study in this section, we will assume that $X=(X_1,\ldots,X_n)$ is a tuple of doubly commuting left-invertible operators on $\clh$.}

It follows from a straightforward computation that if $X$ is a $n$-tuple of doubly commuting left-invertible operators, then the tuple of respective Cauchy duals $X^{\prime}=(X_1^{\prime},\ldots,X_n^{\prime})$, also forms a tuple of doubly commuting left-invertible operators.

Now, corresponding to the tuple $X$, we have a tuple of commuting orthogonal projections $\big(X_1 (X_1^*X_1)^{-1} X_1^*,\ldots ,X_n (X_n^*X_n)^{-1} X_n^* \big)$. From Proposition \ref{left1} and Proposition \ref{left_1}, we have $\ker X_i^* = \ker X_i^{\prime *}$ or equivalently, $X_i \clh = X_i^{\prime} \clh$ for all $i \in \{1,\ldots,n\}$ and therefore, by using Lemma \ref{sum_projn} we get
\begin{equation}\label{eqn1}
P_{\clw(X)^{\perp}} = P_{\sum_{i=1}^n X_i \clh}  = \bigoplus_{k=1}^n X_k (X_k^*X_k)^{-1} X_k^* \prod_{i > k }^n P_{\ker X_i^{*}}.
\end{equation}

Now, let us establish few results that are important for the sequel.

\begin{lemma}\label{vector2}
Let $X=(X_1,\ldots,X_n)$ be a $n$-tuple of doubly commuting left-invertible operators on $\clh$ such that
both $X$ and $X^{\prime}$ possess the wandering subspace property on $\clh$. If $\clm$ is a proper (that is, $\clm \neq \{0\}$ or $\clh$) $X$-joint invariant subspace of $\clh$ such that $\clw(X) \subseteq \clm^{\perp}$. Then there exists a non-zero vector $\eta \in \clm$ such that $X_i^* \eta \in \clm^{\bot}$, for $i=1,\ldots,n$. 
\end{lemma}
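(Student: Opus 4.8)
The plan is to first convert the conclusion into a statement about a wandering subspace. For $\eta \in \clm$, the requirement $X_i^* \eta \in \clm^\perp$ for every $i$ says exactly that $\langle \eta, X_i m\rangle = 0$ for all $m \in \clm$, i.e. $\eta \perp X_i \clm$; since $\clm$ is $X$-joint invariant, the combined condition is precisely $\eta \in \clw(X|_{\clm}) = \clm \ominus \sum_{i=1}^n X_i \clm$. Thus the lemma is equivalent to producing a nonzero element of $\clw(X|_{\clm})$, and since $\clm$ is proper it is in particular nonzero. In view of Proposition~\ref{wander_opt}, it therefore suffices to show that the tuple $X$ is \emph{analytic}, that is, $\bigcap_{l=0}^\infty \sum_{|\bm{m}|=l} X^{\bm{m}} \clh = \{0\}$. (Interestingly, the orthogonality hypothesis $\clw(X) \subseteq \clm^\perp$ and the wandering subspace property of $X$ itself are not needed for this route; analyticity of $X$ alone suffices.)

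The crux is to extract analyticity of $X$ from the wandering subspace property of the Cauchy dual $X^{\prime}$ — a tuple analogue of Corollary~\ref{left_3}. The key computational step is the identity
\[
X^{*\bm{m}} X^{\prime \bm{k}} w = 0 \qquad (w \in \clw(X),\ |\bm{m}| > |\bm{k}|),
\]
where $X^{*\bm{m}} = \prod_i X_i^{*m_i}$. First I would record the elementary Cauchy-dual relations $X_i^* X_i^{\prime} = I$ (whence $X_i^{*k} X_i^{\prime k} = I$ for all $k$), together with the fact that, by double commutativity, $X_i^*$ commutes with $X_j^{\prime}$ for $i \neq j$; this lets me regroup $X^{*\bm{m}} X^{\prime \bm{k}}$ as the commuting product $\prod_i X_i^{*m_i} X_i^{\prime k_i}$. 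Since $|\bm{m}| > |\bm{k}|$ there is a coordinate $i_0$ with $m_{i_0} > k_{i_0}$, for which $X_{i_0}^{*m_{i_0}} X_{i_0}^{\prime k_{i_0}} = X_{i_0}^{*(m_{i_0}-k_{i_0})}$ with positive exponent; applying the remaining (commuting) factors to $w$ keeps the vector in $\ker X_{i_0}^*$, because each such factor commutes with $X_{i_0}^*$ and $w \in \clw(X) \subseteq \ker X_{i_0}^*$, so the surviving power of $X_{i_0}^*$ annihilates it.

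Granting this identity, analyticity follows quickly. Fix $h \in \bigcap_{l} \sum_{|\bm{m}|=l} X^{\bm{m}} \clh$ and any $\bm{k} \in \Nat^n$, $w \in \clw(X)$. Writing $h$ as a limit of finite sums $X^{\bm{m}} g$ with $|\bm{m}| = |\bm{k}|+1$ and $g \in \clh$, and pairing against $X^{\prime \bm{k}} w$, the identity forces $\langle h, X^{\prime \bm{k}} w\rangle = 0$. Hence $h$ is orthogonal to $\bigvee_{\bm{k}} X^{\prime \bm{k}} \clw(X)$; since $\clw(X^{\prime}) = \clw(X)$ by Proposition~\ref{left_1} and $X^{\prime}$ possesses the wandering subspace property, this span is all of $\clh$, so $h = 0$ and $X$ is analytic. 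Applying Proposition~\ref{wander_opt} to the nonzero $X$-joint invariant subspace $\clm$ then yields a nonzero $\eta \in \clw(X|_{\clm})$, which is the desired vector.

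I expect the main obstacle to be the coordinate-wise regrouping of $X^{*\bm{m}} X^{\prime \bm{k}}$ and the careful bookkeeping of which operators preserve $\ker X_{i_0}^*$ — this is precisely where double commutativity and the Cauchy-dual identities must be deployed in tandem. A secondary technical point is the closure step in the last paragraph, where the boundedness of each $X_i$ is used to commute the limit past the finitely many summands in the representation of $h$.
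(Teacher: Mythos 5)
Your proposal is correct, and the key identity $X^{*\bm{m}}X^{\prime\bm{k}}w=0$ for $w\in\clw(X)$ and $|\bm{m}|>|\bm{k}|$ checks out: double commutativity lets you regroup the product coordinate-wise, $X_i^{*k}X_i^{\prime k}=I$, and since $|\bm{m}|>|\bm{k}|$ forces some $m_{i_0}>k_{i_0}$, the leftover power of $X_{i_0}^{*}$ annihilates a vector that the remaining (commuting) factors keep inside $\ker X_{i_0}^{*}$. However, the route is genuinely different from the paper's. The paper works directly inside $\clm$: from the wandering subspace property of $X^{\prime}$ it extracts $h\in\clw(X)$ and a coordinate-wise minimal multi-index $\tilde{\bm{m}}$ with $X^{\prime\tilde{\bm{m}}}h\not\perp\clm$, takes $\eta=P_{\clm}X^{\prime\tilde{\bm{m}}}h$, and verifies $X_i^{*}\eta\in\clm^{\perp}$ case by case, with minimality supplying the orthogonality of the lower-order terms. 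You instead observe that the conclusion is exactly $\clw(X|_{\clm})\neq\{0\}$ and reduce everything to showing that $X$ is analytic --- a tuple analogue of Corollary \ref{left_3} --- after which Proposition \ref{wander_opt} finishes. Your approach is more modular (the implication ``$X^{\prime}$ has the wandering subspace property $\Rightarrow$ $X$ is analytic'' is a reusable statement the paper leaves implicit) and strictly more general: it shows that the hypotheses $\clw(X)\subseteq\clm^{\perp}$ and the wandering subspace property of $X$ itself are not needed, the conclusion holding for every non-zero $X$-joint invariant closed subspace. Two small remarks if you write this up: since there are only finitely many $\bm{m}$ with $|\bm{m}|=l$, elements of $\sum_{|\bm{m}|=l}X^{\bm{m}}\clh$ are exact finite sums, so no limiting argument is needed at that step; on the other hand, your observation that the orthogonality survives passage to closures is worth keeping, since it is precisely what makes the appeal to Proposition \ref{wander_opt} (whose proof really lands in $\bigcap_{l}\overline{\sum_{|\bm{m}|=l}X^{\bm{m}}\clh}$) airtight.
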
 

\begin{proof}
The assumption $\clw(X) \subseteq \clm^{\perp}$ serves two purposes:
\begin{enumerate}
\item[(a)] $\clm$ is not $X$-reducing, since otherwise, $ \clm \perp \bigvee_{\bm{m} \in \Nat^n} X^{\bm{m}} \clw(X) = \clh$, which will contradict the assumption that $\clm$ is proper.
\item[(b)] for any $\eta \in \clm$, there exists some $i \in \{1,\ldots,n\}$ for which $X_i^* \eta \neq 0$ as otherwise, $\eta \in \clm^{\perp}$.
\end{enumerate}
Now, let us note that 
\[
\clw(X) = \bigcap_{i=1}^n \ker X_i^* = \bigcap_{i=1}^n \ker X_i^{\prime *} = \clw(X^{\prime}),
\]  
and therefore, it follows from our assumption that,
\[
\clh = \bigvee_{\bm{m} \in \Nat^n} X^{\prime \bm{m}}\clw(X),
\]
This implies that there must exist a non-zero vector $h \in \clw(X)$ and $\bm{m} \neq 0$ such that $X^{\prime \bm{m}} h \not\perp \clm$. Let us define
\[
\tilde{m}_1 := \min \{m_{1} \in \Nat: \clm \not\perp X^{\prime \bm{m}} h, \text{ where }\bm{m}=(m_1,\ldots,m_n) \in \Nat^n \}.
\]
Now, let us recursively define the other co-ordinates. For all $i \in \{2,\ldots,n\}$, let
\[
\tilde{m}_i= \min \{m_{i} \in \Nat: \clm \not\perp X^{\prime \bm{m}} h; \bm{m} = (\tilde{m}_1, \ldots,\tilde{m}_{i-1},m_i, \ldots,m_n) \text{ for }(m_i,\ldots,m_n) \in \Nat^{n-i} \},
\]
For $\tilde{\bm{m}} = (\tilde{m}_1,\ldots,\tilde{m}_n) \in \Nat^n$, there exists a non-zero vector $\eta \in \clm$, $\zeta \in \clm^{\perp}$ and  such that
\[
X^{\prime \tilde{\bm{m}}} h = \eta \oplus \zeta.
\]
Note that $\eta \in \clm$ implies that $X_i^* \eta  \neq 0$ for some $i \in \{1,\ldots,n\}$. For any $i \in \{ 1,\ldots,n \}$ for which $\tilde{k_i} = 0$ we have
\[
X_{i}^* \eta = X_i^* X^{\prime \tilde{\bm{m}}} h - X_{i}^* \zeta = X^{\prime \tilde{\bm{m}}} X_i^*h - X_{i}^* \zeta  = - X_{i}^* \zeta \in \clm^{\perp}.
\]
For any other $i \in \{ 1,\ldots,n \}$ we have 
\[
X_i^* \eta = X_i^* X^{\prime \tilde{\bm{m}}} h -  X_i^* \zeta = X^{\prime \tilde{\bm{m}}_i}h -   X_i^* \zeta \in \clm^{\perp},
\]
where, $\tilde{\bm{m}}_i = (\tilde{m}_1,\ldots, \tilde{m}_{i-1}, \tilde{m}_{i} - 1, \tilde{m}_{i+1},\ldots,\tilde{m}_{n})$. Since $\tilde{\bm{m}}_i  < \tilde{\bm{m}}$ it implies that $X^{\prime \tilde{\bm{m}}_i }h \in \clm^{\perp}$ and so $X_i^* \eta \in \clm^{\perp}$.  This completes the proof.
\end{proof}

\begin{rem}
It follows from the Beurling-Lax-Halmos theorem that any $M_{z}$-invariant closed subspace of $H_{\cle}^2(\D)$ is of the form $\clm = \Theta H_{\clf}^2(\D)$ for some Hilbert space $\clf$. This implies that for any $\eta \in \clf$ we get $M_z^* M_{\Theta} \eta \in \clm^{\perp}$. The above result shows that even though Beurling-Lax-Halmos type results do not hold for $H_{\cle}^2(\D^n)$, we can still obtain a vector $\eta$ in the $M_{\bm{z}}$-joint invariant closed subspace $\clm$ for which $M_{z_i}^* \eta \in \clm^{\perp}$ for all $i=1,\ldots,n$.
\end{rem}

\begin{lemma}\label{converge}
Let $A$ be a left-invertible operator on $\clh$ and let $\{h_n\}$ be a bounded sequence of vectors in $A \clh$. Then $\lim_{n \raro 0} \| A^* h_n \| = 0$ implies that $\lim_{n \raro 0} \| h_n \| = 0$.
\end{lemma}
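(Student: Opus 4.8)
The plan is to use the two basic features of a left-invertible operator recorded above: by Proposition \ref{left1}, $A^*A$ is invertible, and by Proposition \ref{left_1}(i), $A(A^*A)^{-1}A^*$ is the orthogonal projection of $\clh$ onto the (closed) subspace $A\clh$. The point is that membership $h_n \in A\clh$ then gives an explicit formula for $h_n$ in terms of $A$ and $A^* h_n$, after which the conclusion is a one-line estimate.

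Concretely, I would first observe that since each $h_n$ lies in $A\clh$ and is therefore fixed by the projection $A(A^*A)^{-1}A^*$, we have $h_n = A(A^*A)^{-1}A^* h_n$ for every $n$. Writing $g_n := (A^*A)^{-1}A^* h_n$, so that $h_n = A g_n$, the invertibility of $A^*A$ yields
\[
\|g_n\| = \|(A^*A)^{-1} A^* h_n\| \le \|(A^*A)^{-1}\|\,\|A^* h_n\| \raro 0,
\]
and then boundedness of $A$ gives $\|h_n\| = \|A g_n\| \le \|A\|\,\|g_n\| \raro 0$, as required. Equivalently, one may phrase the same computation through the Cauchy dual $A^{\prime} = A(A^*A)^{-1}$, using $A^{\prime *} h_n = g_n$ and $h_n = A A^{\prime *} h_n$ (the latter being exactly Proposition \ref{left_1}(i) applied to $h_n \in A\clh = A^{\prime}\clh$).

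I do not anticipate any real obstacle here: the only step that needs a word of justification is the identity $h_n = A(A^*A)^{-1}A^* h_n$, which is immediate from Proposition \ref{left_1}(i) once we know $h_n \in A\clh$. It is worth noting that the hypothesis that $\{h_n\}$ is bounded is not actually needed for this direct estimate; it is presumably recorded because the lemma is invoked in that form later. If one preferred a softer argument, one could instead pass to a weakly convergent subsequence of $\{h_n\}$ and argue that its weak limit is annihilated by $A^*$, hence (since $A$ is bounded below, so $A^*$ is injective on $\overline{A\clh}$) is $0$, forcing norm convergence along the subsequence; but the displayed inequality makes this detour unnecessary.
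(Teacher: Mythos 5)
Your main argument is correct, and it takes a slightly different (and in fact sharper) route than the paper. The paper writes $h_n = Ay_n$, uses the bounded-below estimate $\|y_n\| \leq c\|h_n\|$ together with the quadratic inequality $\|Ay_n\|^2 = \langle A^*Ay_n, y_n\rangle \leq \|A^*h_n\|\,\|y_n\|$, and then invokes the boundedness of $\{h_n\}$ to conclude; the hypothesis that the sequence is bounded is genuinely used there. You instead exploit Proposition \ref{left_1}(i) directly: since $h_n \in A\clh$ is fixed by the projection $A(A^*A)^{-1}A^*$, you get the linear estimate $\|h_n\| \leq \|A\|\,\|(A^*A)^{-1}\|\,\|A^*h_n\|$, which is cleaner, dispenses with the boundedness hypothesis, and even yields a uniform constant. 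Your observation that boundedness is superfluous is correct and is a genuine (if minor) improvement over the paper's formulation. One caveat: the ``softer'' alternative you sketch at the end does not work as stated --- extracting a weakly convergent subsequence with weak limit $0$ does not force norm convergence of that subsequence, so the final step of that detour is a non sequitur. Since you explicitly do not rely on it, this does not affect the validity of your proof, but you should either delete that remark or repair it (e.g.\ by reverting to the displayed inequality, at which point the detour adds nothing).
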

\begin{proof}
By assumption, there exists a strictly positive real number $\alpha$ such that $\|h_n\| \leq \alpha$ for all $n \in \Nat$. Now, $h_n \in A\clh$ implies that there exists $y_n \in \clh$ such that $A y_n = h_n$, for all $n \in \Nat$. Since $A$ is bounded below there exists $c>0$ such that $\|y_n\| \leq c \|A y_n\| = c \|h_n\|$. Now,
\[
\lim_{n \raro 0} \|h_n\|^2 = \lim_{n \raro 0}\|A y_n\|^2 \leq \lim_{n \raro 0}\|A^* A y_n\| \|y_n\| \leq c \lim_{n \raro 0}\|A^* h_n\|  \|h_n\| \leq c \alpha \lim_{n \raro 0}\|A^* h_n\| = 0.
\]
This completes the proof.
\end{proof}

Given a contraction $T \in \clb(\clh)$, since $\{T^m T^{*m}\}_m$ is a monotonic decreasing sequence of  positive operators there exists a positive operator say $A_T$, such that $\{T^m T^{*m}\}_n$ converges in the strong operator topology to $A_T^2$. Moreover, by definition we have $T A_T^2 T^* = A_T^2$ which further implies that
\[
\|A_T T^{*m} h \| = \|A_T h \| \quad (h \in \clh, m \in \Nat).
\] 
We are now ready to prove the main result of this section.

\begin{proof}[Proof of Theorem \ref{main1}]
$(i) \implies (ii):$ Since $\clw(X)$ is a $T^*$-invariant subspace of $\clh$, for any $m \in \Nat$, we get 
\[
(P_{\clw(X)} T|_{\clw(X)})^{*m} =  T^{*m}|_{\clw(X)},
\]
which implies that $P_{\clw(X)} T|_{\clw(X)}$ is a pure contraction.

$(ii) \implies (i):$ Our aim is to prove that $\ker A_T = \clh$. Let us first observe that the assumption $P_{\clw(X)} T|_{\clw(X)}$ is a pure contraction on $\clw(X)$ implies that $\clw(X) \subseteq \ker A_T$. Indeed for any $\eta \in \clw(X)$
\[
\|A_T \eta\| = \lim_{m \raro \infty} \|T^{*m} \eta \| = \lim_{m \raro \infty} \| \big( P_{\clw(X)}T P_{\clw(X)})^{*m} \eta \| = 0.
\]

Case (I): If $\ker A_T$ is a $X$-joint reducing subspace of $\clh$, then by the above condition we get $\clh = \bigvee_{\bm{m} \in \Nat^n} X^{\bm{m}} \clw(X) \subseteq \ker A_T \subseteq \clh$. This will show that $\ker A_T = \clh$. 

Case (II): Here, we consider the case where $\ker A_T$ is not a $X$-joint reducing subspace of $\clh$. Let us assume that $\ker A_T$ is a proper subspace of $\clh$. Using Lemma \ref{vector2}, we find a non-zero element $\eta \in (\ker A_T)^{\perp}$ such that $X_i^* \eta \in \ker A_T$ for all $i \in \{1,\ldots,n\}$.  Now, the orthogonal decompositon in equation (\ref{eqn1}) shows that for each $m \in \Nat$, there exists $\zeta_m \in \clw(X)$ and $h_{j,m} \in X_j (X_j^*X_j)^{-1} X_j^* \prod_{i > j }^n P_{\ker X_i^*}$ such that
\begin{equation}\label{eqn2}
T^{*m} \eta =   \zeta_m \oplus  \sum_{j=1}^n h_{j,m},
\end{equation}
and therefore, 
\[
\|T^{*m} \eta \|^2 = \| \zeta_m\|^2 +  \sum_{j=1}^n \| h_{j,m} \|^2.
\]
Since $T$ is a contraction, we have $\|h_{j,m}\| \leq  \|\eta\|$ for all $j \in \{1,\ldots,n\}$ and $m \in \Nat$. If we act by $X_i^*$ on the left hand side of equation (\ref{eqn2}), then we get

\begin{equation}\label{eqn3}
X_i^* T^{*m} \eta = X_i^* \zeta_m + X_i^*  \sum_{j=1}^n h_{j,m} = X_i^* \sum_{j=1}^n h_{j,m}.
\end{equation}

If $i = n$, then by the orthogonal decomposition in equation (\ref{eqn1}), we have
\[
X_n^* T^{*m} \eta = X_{n}^* \sum_{j=1}^n h_{j,m} = X_{n}^* h_{n,m}.
\]
Thus, using the fact that $X_n^* \eta \in \ker A_T$, we get
\[
\lim_{k \raro \infty} \|X_{n}^* h_{n,m} \| = \lim_{k \raro \infty} \| X_n^* X^{*m}  \eta \| = \lim_{k \raro \infty} \|  T^{*m}  X_n^*  \eta  \| = 0.
\]
Now, $h_{n,m} \in X_n \clh$ and therefore, by applying Lemma \ref{converge}, we obtain $\lim_{m \raro \infty} \|h_{n,m}\|=0$.   We will now proceed to establish that $\lim_{m \raro \infty}\|\sum_{j=1}^n h_{j,m}\| = 0$.  For proving this, we will show that it is sufficient to obtain the following result.

\begin{claim}
If $ \underset{m \raro \infty}{\lim} \|\underset{{j=l+1}}{\overset{n}\sum} h_{j,m}\| = 0$, then $\underset{m \raro \infty}{\lim}
 \|\underset{{j=l}}{\overset{n}\sum} h_{j,m}\| = 0$ for any $l \in \{1,\ldots,n-1\}$.
\end{claim}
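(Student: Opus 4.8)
The plan is to reduce the claim to proving $\|h_{l,m}\|\raro 0$ as $m\raro\infty$, and then to repeat, for the index $l$, the argument already carried out for $l=n$. The structural input is equation (\ref{eqn1}): the vectors $h_{1,m},\dots,h_{n,m}$ lie in the pairwise orthogonal ranges of the projections $Q_j:=X_j(X_j^*X_j)^{-1}X_j^*\prod_{i>j}^n P_{\ker X_i^*}$, and $h_{j,m}=Q_jT^{*m}\eta$. Using this orthogonality one writes
\[
\Big\|\sum_{j=l}^n h_{j,m}\Big\|^2 \;=\; \|h_{l,m}\|^2 \;+\; \Big\|\sum_{j=l+1}^n h_{j,m}\Big\|^2 ,
\]
and since the last summand tends to $0$ by the hypothesis of the claim, the claim will follow once $\|h_{l,m}\|\raro 0$ is established.

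To obtain this, I would apply $X_l^*$ to the decomposition (\ref{eqn2}) of $T^{*m}\eta$. Two terms vanish: $X_l^*\zeta_m=0$ since $\zeta_m\in\clw(X)\subseteq\ker X_l^*$, and $X_l^*h_{j,m}=0$ for every $j<l$, because in that case $l$ is one of the indices occurring in $\prod_{i>j}^n P_{\ker X_i^*}$, so $\ran Q_j\subseteq\ker X_l^*$. Together with $X_l^*T^{*m}=T^{*m}X_l^*$ this yields
\[
T^{*m}X_l^*\eta \;=\; X_l^* h_{l,m} \;+\; X_l^*\!\sum_{j=l+1}^n h_{j,m}.
\]
The left-hand side tends to $0$: by construction $X_l^*\eta\in\ker A_T$ (this is the defining property of the vector $\eta$ produced above via Lemma \ref{vector2}), and $\|T^{*m}g\|^2=\langle T^mT^{*m}g,g\rangle$ decreases to $\|A_Tg\|^2$ for every $g$. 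The second term on the right tends to $0$ by the hypothesis of the claim together with the boundedness of $X_l^*$. Hence $\|X_l^*h_{l,m}\|\raro 0$.

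Finally I would invoke Lemma \ref{converge} with the left-invertible operator $X_l$. The sequence $\{h_{l,m}\}_m$ is bounded, since $T$ is a contraction and $\|h_{l,m}\|=\|Q_lT^{*m}\eta\|\le\|\eta\|$, and it lies in $\ran Q_l\subseteq X_l\clh$ because $X_l(X_l^*X_l)^{-1}X_l^*$ is the orthogonal projection onto $X_l\clh$. Lemma \ref{converge} then converts $\|X_l^*h_{l,m}\|\raro 0$ into $\|h_{l,m}\|\raro 0$, which completes the reduction and proves the claim.

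I do not anticipate a genuine obstacle here: this is a routine adaptation of the already-treated case $l=n$. The one point requiring care is the bookkeeping of which factors $P_{\ker X_i^*}$ appear inside each $Q_j$ — precisely this is what forces both $X_l^*h_{j,m}=0$ for $j<l$ and $h_{l,m}\in X_l\clh$ — while the orthogonality of the $h_{j,m}$, the contractivity bound, and the monotone convergence to $\|A_T(\cdot)\|^2$ are all already in hand from the preceding discussion.
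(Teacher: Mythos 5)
Your argument is correct and is essentially the paper's own proof: apply $X_l^*$ to the decomposition of $T^{*m}\eta$, use the structure of the projections in equation (\ref{eqn1}) to kill $\zeta_m$ and the terms with $j<l$, let $X_l^*\eta\in\ker A_T$ and the claim's hypothesis force $\|X_l^*h_{l,m}\|\raro 0$, and then invoke Lemma \ref{converge} together with the bound $\|h_{l,m}\|\le\|\eta\|$ and $h_{l,m}\in X_l\clh$. The only (harmless) difference is that you make explicit the orthogonality identity $\|\sum_{j=l}^n h_{j,m}\|^2=\|h_{l,m}\|^2+\|\sum_{j=l+1}^n h_{j,m}\|^2$, which the paper leaves implicit.
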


Let us observe from equation (\ref{eqn1}) and equation (\ref{eqn3}) that
\[
X_l^* T^{*m} \eta = X_l^* \sum_{j=1}^n h_{j,m} = X_l^* \sum_{j=l}^n h_{j,m},
\]
and thus,
\[
 \|X_l^* h_{l,m}\| =  \|X_l^* T^{*m} \eta - X_{l}^* \sum_{j=l+1}^n h_{j,m}\|,
\]
which further implies that $\lim_{m \raro \infty} \|X_l^* h_{l,m}\| = 0$.  Now, $h_{l,m} \in X_l \clh$ and again by applying Lemma \ref{converge}, we obtain $\lim_{m \raro \infty} \|h_{l,m}\|=0$.  This completes the proof of the claim.

Starting with $l=n$, if we iterate this argument for $n-1$ times, then we get $\|\sum_{j=1}^n h_{j,m}\| \raro 0$ as $k \raro \infty$. Now, for $k \in \Nat$ we have
\begin{equation}\label{pure_cont}
\| A_T \eta\| = \| A_T T^{*m} \eta \| =  \| A_T (\zeta_m \oplus \sum_{j=1}^n h_{j,m})\| \leq \|A_T\|  \|\sum_{j=1}^n h_{j,m}\| \leq \|\sum_{j=1}^n h_{j,m}\|.
\end{equation}
If we let $m \raro \infty$, then we have $A_T \eta = 0$, which further implies that $\eta \in \mbox{ ker } A_T$. However, by assumption $\eta \in (\mbox{ker }A_{T})^{\perp}$ and therefore, $\eta =0$. This contradicts the assumption that $\ker A_{T}$ is a proper subspace of $\clh$. Since $\ker A_T \supseteq \clw(X) \neq \{0\}$, the only possibility is that $\ker A_{T} = \clh$ that is, $T$ is a pure contraction on $\clh$. This completes the proof.
\end{proof}

\begin{rem}
The above result holds true even if we consider $T$ to be a power bounded operator (say with a bound $M$) for which $\{T^{*m}\}_k$ converges strongly as $m \raro \infty$. The only difference in the method is that the operator $A_T$ cannot be constructed since $\{T^mT^{*m}\}$ will no longer be a monotonic decreasing sequence. Instead, we will have to replace equation (\ref{pure_cont}) with the following argument:
\begin{align*}
\lim_{m \raro \infty} \| T^{*m} \eta\| = \lim_{m \raro \infty} \lim_{l \raro \infty} \| T^{*(l+m)} \eta\| &= \lim_{m \raro \infty} \lim_{l \raro \infty}  \| T^{*l} (T^{*m} \eta) \| \\
&= \lim_{m \raro \infty} \lim_{l \raro \infty}  \| T^{*l}(\zeta_m \oplus \sum_{j=1}^n h_{j,m})\| \\
& \leq \lim_{m \raro \infty} \lim_{m \raro \infty} \| T^{*l}\zeta_m\| + \lim_{l \raro \infty} \| T^{*l}(\sum_{j=1}^n h_{j,m})\| \\
&= 0 + \lim_{m \raro \infty} \lim_{l \raro \infty} \| T^{*l}(\sum_{j=1}^n h_{j,m})\| \\
&\leq 0 + M \lim_{m \raro \infty} \|\sum_{j=1}^n h_{j,m}\| = 0.
\end{align*}
\end{rem}
%

Now, let us establish a similar result for doubly commuting pure isometries.

\begin{cor}\label{Hardy_pure2}
Let $V=(V_1,\ldots,V_n)$ be a $n$-tuple of doubly commuting pure isometries on $\clh$. Let $T \in \clb(\clh)$ be such that $ TV_i = V_i T$, for all $i \in \{1,\ldots,n\}$. Then the following statements are equivalent:
\item[(i)] $T$ is a pure contraction on $\clh$.
\item[(ii)]$P_{\clw(V_i)} T|_{\clw(V_i)}$ is a pure contraction on $\clw(V_i)$ for some $i \in \{1,\ldots,n\}$.
\item[(iii)]$P_{\clw(V)} T|_{\clw(V)}$ is a pure contraction on $\clw(V)$.
\end{cor}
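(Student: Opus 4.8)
The plan is to deduce Corollary~\ref{Hardy_pure2} from Theorem~\ref{main1}, using the observation that a doubly commuting tuple of pure isometries is a particularly simple instance of a doubly commuting tuple of left-invertible operators. First I would record the easy structural facts: each $V_i$ is an isometry, hence left-invertible with $V_i^*V_i = I$, so the Cauchy dual $V_i' = V_i(V_i^*V_i)^{-1} = V_i$; thus $V' = V$ as tuples. Since each $V_i$ is a \emph{pure} isometry, it is analytic, i.e. $\bigcap_{m\ge 0} V_i^m\clh = \{0\}$; combined with the Wold--von Neumann decomposition (recalled in Section~\ref{Sec2}), each $V_i$ has the wandering subspace property, $\clh = \bigvee_{m\in\Nat} V_i^m \clw(V_i)$. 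The one genuine point to check is that the \emph{tuple} $V$ has the wandering subspace property, $\clh = \bigvee_{\bm{m}\in\Nat^n} V^{\bm{m}}\clw(V)$, and likewise (trivially, since $V'=V$) for $V'$. This is where double commutativity enters: I would argue by induction on $n$, peeling off $V_n$. By the single-variable Wold decomposition, $\clh = \bigoplus_{k\ge 0} V_n^k \clw(V_n)$, and each summand $V_n^k\clw(V_n)$ is invariant and reducing for the doubly commuting tuple $(V_1,\ldots,V_{n-1})$ (using $V_n^*V_j = V_jV_n^*$ for $j<n$); applying the inductive hypothesis to $(V_1,\ldots,V_{n-1})$ restricted to $\clw(V_n)$ and noting $\clw(V) = \bigcap_{i=1}^n \ker V_i^* = \clw\big((V_1,\ldots,V_{n-1})|_{\clw(V_n)}\big)$ gives the claim. (Alternatively one may invoke the known structure of doubly commuting isometric tuples as a tensor product of shifts times a unitary, which for pure isometries is just $M_{\bm{z}}$ on a vector-valued Hardy space on $\D^n$, where the statement is transparent.)

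With these facts in hand, Theorem~\ref{main1} applies verbatim to $X = V$ (all hypotheses — doubly commuting, left-invertible, $X_iT = TX_i$, and wandering subspace property for both $X$ and $X'=X$ — are met), yielding the equivalence $(i)\Leftrightarrow(iii)$: $T$ is pure on $\clh$ iff $P_{\clw(V)}T|_{\clw(V)}$ is pure on $\clw(V)$.

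It remains to fold in statement $(ii)$. For a fixed $i$, $\clw(V_i) = \clh \ominus V_i\clh = \ker V_i^*$ is a $T^*$-invariant subspace (since $TV_i = V_iT$ forces $V_i^*T^* = T^*V_i^*$, so $T^*$ preserves $\ker V_i^*$), and on it $V_i$ acts as $0$ while the remaining $(V_1,\ldots,\widehat{V_i},\ldots,V_n)$ restrict to a doubly commuting tuple of pure isometries on $\clw(V_i)$, commuting with the contraction $P_{\clw(V_i)}T|_{\clw(V_i)}$. Applying the already-established equivalence $(i)\Leftrightarrow(iii)$ in dimension $n-1$ on the space $\clw(V_i)$ — with wandering subspace of the restricted $(n-1)$-tuple again equal to $\clw(V)$ — shows that $P_{\clw(V_i)}T|_{\clw(V_i)}$ is pure iff $P_{\clw(V)}T|_{\clw(V)}$ is pure, which is $(iii)$. (The base case $n=1$ is exactly Proposition~\ref{pure_mult1}.) Finally, the direction $(i)\Rightarrow(ii)$ is immediate since $(P_{\clw(V_i)}T|_{\clw(V_i)})^{*m} = T^{*m}|_{\clw(V_i)}$ by $T^*$-invariance of $\clw(V_i)$. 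Thus $(i)\Leftrightarrow(ii)\Leftrightarrow(iii)$.

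The main obstacle I anticipate is the bookkeeping in showing the \emph{tuple} wandering subspace property from double commutativity and the one-variable Wold decomposition — specifically, verifying that the inductive peeling of $V_n$ correctly identifies $\clw(V)$ with the wandering subspace of the lower tuple on $\clw(V_n)$, and that all the "$\widehat{V_i}$" restricted tuples remain doubly commuting pure isometries. Everything else is a direct appeal to Theorem~\ref{main1} and routine invariance arguments.
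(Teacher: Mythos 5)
Your proposal is correct and takes essentially the same route as the paper: both reduce the corollary to Theorem \ref{main1} applied to $X=V$, using that isometries are left-invertible with $V'=V$ and that a doubly commuting tuple of pure isometries has the wandering subspace property. The only difference is that where the paper obtains the tuple wandering subspace property and the equivalence $(i)\Leftrightarrow(ii)$ by citation (to \cite{jaydeb2} and \cite{SS} respectively), you supply self-contained arguments --- an induction peeling off $V_n$ via the Wold decomposition, and a restriction of the remaining $(n-1)$-tuple to $\clw(V_i)$ --- both of which check out.
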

\begin{proof}
Statements $(i)$ and $(ii)$ are equivalent from [Proposition 3.1, \cite{SS}]. From [Theorem 2.3, \cite{jaydeb2}], we know that any $n$-tuple of doubly commuting pure isometries satisfies the wandering subspace property. The equivalence of $(i)$ and $(iii)$ now follows from this observation and Theorem \ref{main1}, since an isometry is also a left-invertible operator by definition.
\end{proof}

\subsection{Wandering subspace for tuples of operators on tensor product of Hilbert spaces}
For a $n$-tuple of left-invertible operators $X=(X_1,\ldots,X_n)$ on $\clh$, let us consider the tuple of left-invertible operators $\clx=(\clx_1,\ldots, \clx_n)$ on $\crh:=\underset{n \text{ times}}{\underbrace{\clh \otimes \ldots \otimes \clh}}$ defined by 
\[
\clx_i = \underset{i-1 \text{ times}}{ \underbrace{I_{\clh} \otimes \ldots \otimes I_{\clh}}} \otimes X_i \otimes  \underset{n-i \text{ times}}{ \underbrace{I_{\clh} \otimes \ldots \otimes I_{\clh}}}.
\]

It follows from a straightforward computation that the Cauchy dual of the left-invertible operator $\clx_i$ is 
\[
\clx_i^{\prime} = \underset{i-1 \text{ times}}{ \underbrace{I_{\clh} \otimes \ldots \otimes I_{\clh}}} \otimes X_i^{\prime} \otimes  \underset{n-i \text{ times}}{ \underbrace{I_{\clh} \otimes \ldots \otimes I_{\clh}}}.
\]
The corresponding tuple of Cauchy duals is denoted by $\clx^{\prime} = (\clx_1^{\prime} ,\ldots, \clx_n^{\prime})$. Now, let us compute the wandering subspace of these tuples of operators.

\begin{lemma}\label{left_wand}
Let $X=(X_1,\ldots,X_n)$ be a $n$-tuple of doubly commuting left-invertible operators on $\clh$. Then $\clw(\clx) = \clw(\clx^{\prime}) = \clw(X_1) \otimes \ldots \otimes \clw(X_n)$.
\end{lemma}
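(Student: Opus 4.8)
The statement has two parts: the equality $\clw(\clx) = \clw(\clx')$ and the tensor identification $\clw(\clx) = \clw(X_1) \otimes \cdots \otimes \clw(X_n)$. The first part is immediate: since each $\clx_i'$ is built from $X_i'$ in the same way $\clx_i$ is built from $X_i$, and since $\ker X_i^* = \ker X_i'^*$ by Proposition \ref{left_1}(ii), one has $\ker \clx_i^* = \ker \clx_i'^*$ for every $i$; intersecting over $i$ gives $\clw(\clx) = \bigcap_i \ker \clx_i^* = \bigcap_i \ker \clx_i'^* = \clw(\clx')$. So the real content is the computation of $\bigcap_{i=1}^n \ker \clx_i^*$.

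For that, the plan is to identify $\ker \clx_i^*$ explicitly. Writing $\crh = \clh^{\otimes n}$, we have $\clx_i^* = I_\clh \otimes \cdots \otimes X_i^* \otimes \cdots \otimes I_\clh$ (the adjoint acts in the $i$-th slot only). A standard fact about tensor products of Hilbert spaces is that for a bounded operator $S$ on $\clh$, the kernel of $I \otimes \cdots \otimes S \otimes \cdots \otimes I$ equals $\clh \otimes \cdots \otimes \ker S \otimes \cdots \otimes \clh$, where the closure of the algebraic tensor product is understood in the slot containing $\ker S$; this follows because $I \otimes \cdots \otimes S \otimes \cdots \otimes I$ restricted to $\clh \otimes \cdots \otimes (\ker S)^\perp \otimes \cdots \otimes \clh$ is injective with dense range onto a subspace orthogonal to $\clh \otimes \cdots \otimes \ker S \otimes \cdots \otimes \clh$. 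Applying this with $S = X_i^*$ gives
\[
\ker \clx_i^* = \clh \otimes \cdots \otimes \ker X_i^* \otimes \cdots \otimes \clh \qquad (i = 1, \ldots, n).
\]
Then I would intersect these $n$ subspaces. The key point is that for decomposable subspaces of a tensor product, $\bigcap_{i=1}^n (\clh \otimes \cdots \otimes \clk_i \otimes \cdots \otimes \clh) = \clk_1 \otimes \cdots \otimes \clk_n$, where $\clk_i \subseteq \clh$ is a closed subspace placed in the $i$-th slot. Taking $\clk_i = \ker X_i^* = \clw(X_i)$ yields $\clw(\clx) = \clw(X_1) \otimes \cdots \otimes \clw(X_n)$, as claimed. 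Note that the doubly-commuting hypothesis is not actually needed for this particular computation — it guarantees (via a separate straightforward check) that $\clx$ and $\clx'$ are themselves doubly commuting tuples, which matters elsewhere, but here only the slotwise structure of the operators is used.

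\textbf{Main obstacle.} The only non-routine point is the lemma on intersections of "decomposable" subspaces of a tensor product: that $\bigcap_i (\clh \otimes \cdots \otimes \clk_i \otimes \cdots \otimes \clh)$ is exactly $\bigotimes_i \clk_i$ rather than something larger. I would prove this by induction on $n$, or more cleanly by writing each factor $\clh \otimes \cdots \otimes \clk_i \otimes \cdots \otimes \clh$ as the range of the orthogonal projection $I \otimes \cdots \otimes P_{\clk_i} \otimes \cdots \otimes I$; since these $n$ projections pairwise commute (they act in disjoint slots), the projection onto the intersection of their ranges is the product $\prod_{i=1}^n (I \otimes \cdots \otimes P_{\clk_i} \otimes \cdots \otimes I) = P_{\clk_1} \otimes \cdots \otimes P_{\clk_n}$, whose range is precisely $\clk_1 \otimes \cdots \otimes \clk_n$. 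This also makes transparent why the result is clean. With this in hand the proof is a couple of lines, and I would present it in roughly the order above: first dispose of $\clw(\clx) = \clw(\clx')$ via $\ker X_i^* = \ker X_i'^*$, then compute $\ker \clx_i^*$ slotwise, then intersect using the commuting-projections observation.
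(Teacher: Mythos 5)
Your proposal is correct and follows essentially the same route as the paper: both arguments reduce the claim to the observation that the projections $P_{\ker\clx_i^*}$ act in disjoint tensor slots, hence commute, so the projection onto their intersection of ranges is the product $P_{\ker X_1^*}\otimes\cdots\otimes P_{\ker X_n^*}$, whose range is $\clw(X_1)\otimes\cdots\otimes\clw(X_n)$. The only cosmetic difference is that the paper obtains the slotwise form of $P_{\ker\clx_i^*}$ from the explicit left-invertibility formula $I - X_i(X_i^*X_i)^{-1}X_i^*$ rather than from your general kernel-of-a-slotwise-operator fact, and your remark that the commutation of these projections does not actually require the doubly commuting hypothesis is accurate.
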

\begin{proof}
Using the property of left-invertible operators we obtain that
\[
P_{\ker \clx_i^*} = \underset{i-1 \text{ times}}{ \underbrace{I_{\clh} \otimes \ldots \otimes I_{\clh}}} \otimes P_{\ker X_i^*} \otimes  \underset{n-i \text{ times}}{ \underbrace{I_{\clh} \otimes \ldots \otimes I_{\clh}}}.
\]


Note that $\clw(\clx) = \bigcap_{i=1}^n \ker \clx_i^* = \clw(\clx^{\prime})$. 
Since $X$ is a tuple of doubly commuting operators, both $\clx$ and $\clx^{\prime}$, are doubly commuting, which further implies that $(P_{\ker \clx_1^*},\ldots,P_{\ker \clx_n^*})$ is a commuting tuple of orthogonal projections and therefore,
\[
P_{\clw(\clx^{\prime})} = P_{\bigcap_{i=1}^n \ker \clx_i^*} = \prod_{i=1}^n P_{\ker \clx_i^*} = P_{\ker X_1^*} \otimes \ldots \otimes P_{\ker X_n^*}.
\]
Thus, 
\[
\clw(\clx) = \bigcap_{i=1}^n \ker \clx_i^* = \ker X_1^* \otimes \ldots \otimes \ker X_n^* = \clw(X_1) \otimes \ldots \otimes \clw(X_n).
\]
This completes the proof.
\end{proof}

\begin{thm}\label{left_char}
Let $X=(X_1,\ldots,X_n)$ be a $n$-tuple of doubly commuting left-invertible operators on $\clh$ such that both $X_i$ and $X_i^{\prime}$ possess the wandering subspace property on $\clh$ for each $i \in \{1,\ldots,n\}$. Then both $\clx^{\prime}$ and $\clx$ possess the wandering subspace property on $\crh$.
\end{thm}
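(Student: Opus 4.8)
The goal is to show that if each coordinate $X_i$ and its Cauchy dual $X_i^{\prime}$ has the wandering subspace property on $\clh$, then the tensor-inflated tuples $\clx$ and $\clx^{\prime}$ have the wandering subspace property on $\crh = \clh^{\otimes n}$. By the symmetry of the hypotheses (the dual of $X_i^{\prime}$ is $X_i$, and the dual of $\clx_i^{\prime}$ is $\clx_i$), it suffices to prove the statement for $\clx$; the statement for $\clx^{\prime}$ then follows by interchanging the roles of $X_i$ and $X_i^{\prime}$. By Corollary \ref{left_3}, the wandering subspace property for the left-invertible tuple $\clx$ (which we should phrase coordinatewise, or else appeal directly to a joint analogue) is equivalent to analyticity of the Cauchy dual $\clx^{\prime}$. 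So the plan is: \textbf{reduce to showing that $\clx^{\prime}$ is an analytic tuple on $\crh$, i.e. $\bigcap_{l=0}^{\infty}\sum_{|\bm{m}|=l}\clx^{\prime\bm{m}}\crh = \{0\}$}, using the hypothesis that each $X_i^{\prime}$ is analytic on $\clh$ (which is exactly the content of $X_i$ having the wandering subspace property, via Corollary \ref{left_3}).

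First I would record the factorization $\clx^{\prime\bm{m}} = (X_1^{\prime})^{m_1}\otimes\cdots\otimes(X_n^{\prime})^{m_n}$, which holds because the $\clx_i^{\prime}$ act on disjoint tensor legs. The key elementary fact I need is a lemma on tensor products of decreasing chains of subspaces: if $\{\clm_l^{(i)}\}_l$ is a decreasing sequence of closed subspaces of $\clh$ with $\bigcap_l \clm_l^{(i)} = \{0\}$ for some $i$, then $\bigcap_{l}\big(\clh\otimes\cdots\otimes\clm_l^{(i)}\otimes\cdots\otimes\clh\big) = \{0\}$, and more generally that one can control joint sums. Concretely, set $\clh_l := \sum_{j=0}^{l}(X_i^{\prime})^{j}\clh$ — wait, more useful is $\clh_\infty^{(i)} := \bigcap_{l}(X_i^{\prime})^{l}\clh = \{0\}$ by analyticity of $X_i^{\prime}$. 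Then I would show
\[
\bigcap_{l=0}^{\infty}\ \sum_{|\bm{m}|=l}\clx^{\prime\bm{m}}\crh \ \subseteq\ \bigcap_{l=0}^{\infty}\ \clh\otimes\cdots\otimes\big((X_i^{\prime})^{\lceil l/n\rceil}\clh\big)\otimes\cdots\otimes\clh,
\]
because any multi-index with $|\bm{m}| = l$ has at least one coordinate $m_i \geq l/n$, and $(X_j^{\prime})^{m_j}\clh \subseteq \clh$ for the other legs; one must be slightly careful since the sum is over many $\bm{m}$, so I would instead argue that a vector in the left-hand intersection lies, for every $l$, in the closed span of the terms $\clx^{\prime\bm{m}}\crh$ with $|\bm{m}|=l$, each of which sits inside $\sum_{i}\big(\clh\otimes\cdots\otimes(X_i^{\prime})^{\lceil l/n\rceil}\clh\otimes\cdots\otimes\clh\big)$, and then let $l\to\infty$.

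The main obstacle is exactly this last interchange: passing from "each summand is eventually inside a small subspace" to "the whole sum (and its closure) is inside a small subspace", because closures of sums of subspaces do not behave as simply as finite sums. The cleanest way around this, and the route I would actually take, is to use the \emph{orthogonal} structure coming from double commutativity: since $X$ is doubly commuting, so is $X^{\prime}$, hence each $\clx_i^{\prime}$ is doubly commuting with the others, so the range projections $Q_i := \clx_i^{\prime}(\clx_i^{\prime*}\clx_i^{\prime})^{-1}\clx_i^{\prime*}$ onto $\clx_i^{\prime}\crh = \clx_i\crh$ commute. Using Lemma \ref{sum_projn} and the iterated version of equation (\ref{eqn1}), I can decompose $\crh$ as an orthogonal direct sum indexed by which legs a vector "uses", and then analyticity in each leg forces the intersection $\bigcap_l\sum_{|\bm m|=l}\clx^{\prime\bm m}\crh$ to be trivial leg-by-leg, using Lemma \ref{left_wand} to identify $\clw(\clx^{\prime})$ and a telescoping argument on $(X_i^{\prime})^{l}\clh$. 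Once analyticity of $\clx^{\prime}$ on $\crh$ is established, Corollary \ref{left_3} gives that $\clx$ has the wandering subspace property; the symmetric argument with $X_i \leftrightarrow X_i^{\prime}$ gives the wandering subspace property for $\clx^{\prime}$, completing the proof.
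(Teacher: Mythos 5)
Your proposal has a genuine gap, and it sits exactly at the step you flag only parenthetically: the reduction of the theorem to analyticity of $\clx^{\prime}$ via ``Corollary \ref{left_3} for tuples.'' Corollary \ref{left_3} (Shimorin's duality) is a statement about a \emph{single} left-invertible operator, resting on the identity (\ref{wand_eqn}), and the paper never establishes a joint analogue for tuples. This is not a cosmetic omission: the joint wandering subspace property asks that $\crh = \bigvee_{\bm{m}} \clx^{\bm{m}}\clw(\clx)$ with $\clw(\clx)=\bigcap_i \ker\clx_i^*$, which is a far smaller space than any single $\clw(\clx_i)$ (the introduction makes exactly this point for $M_{\bm z}$ on $H^2(\D^n)$), so a ``coordinatewise'' phrasing does not suffice, and the ``joint analogue'' you would need is a nontrivial theorem in its own right. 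Your second step --- proving analyticity of the tuple $\clx^{\prime}$ --- is also left incomplete: as you yourself note, $\sum_{|\bm m|=l}\clx^{\prime\bm m}\crh$ is only contained in the \emph{sum over $i$} of the spaces $\clh\otimes\cdots\otimes (X_i^{\prime})^{\lceil l/n\rceil}\clh\otimes\cdots\otimes\clh$, and passing the intersection over $l$ through that sum is precisely the hard interchange; the appeal to commuting range projections and a ``telescoping argument'' is not carried out.

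The paper's proof avoids all of this by never mentioning analyticity. It directly verifies $\crh=\bigvee_{\bm m\in\Nat^n}\clx^{\prime\bm m}\clw(\clx^{\prime})$: by Lemma \ref{left_wand}, $\clw(\clx^{\prime})=\clw(X_1)\otimes\cdots\otimes\clw(X_n)$, and since $\clx_1^{\prime m}=X_1^{\prime m}\otimes I\otimes\cdots\otimes I$ acts on the first tensor leg only, the wandering subspace property of $X_1^{\prime}$ on $\clh$ gives
\[
\bigvee_{m\in\Nat}\clx_1^{\prime m}\bigl(\clw(X_1)\otimes\cdots\otimes\clw(X_n)\bigr)=\clh\otimes\clw(X_2)\otimes\cdots\otimes\clw(X_n)\subseteq\bigvee_{\bm m}\clx^{\prime\bm m}\clw(\clx^{\prime}),
\]
and one fills the remaining legs one at a time with $\clx_2^{\prime},\ldots,\clx_n^{\prime}$; the statement for $\clx$ follows by swapping $X_i\leftrightarrow X_i^{\prime}$. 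If you want to rescue your route, you would have to first prove the tuple version of Shimorin's duality for doubly commuting left-invertible tuples, which is considerably more work than the direct leg-by-leg argument.
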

\begin{proof}
Our aim is to show that 
\[
\crh = \bigvee_{\bm{m} \in \Nat^n} \clx^{\prime \bm{m}} \clw(\clx^{\prime}).
\]
By Lemma \ref{left_wand}, $\clw(\clx^{\prime}) = \clw(\clx_1^{\prime}) \otimes \ldots \otimes \clw(\clx_n^{\prime})$, which implies that 
\[
\clh \otimes \clw(X_2^{\prime}) \otimes \ldots \otimes \clw(X_{n}^{\prime})  =  \bigvee_{m \in \Nat^n} \clx_1^{\prime m} \big(\clw(\clx_1^{\prime}) \otimes \ldots \otimes \clw(\clx_n^{\prime})) \big) \subseteq \bigvee_{\bm{m} \in \Nat^n} \clx^{\prime \bm{m}} \clw(\clx^{\prime}).
\]
Similarly,
\[
\clh \otimes \clh \otimes \clw(X_3^{\prime}) \otimes \ldots \otimes \clw(X_{n}^{\prime})  =  \bigvee_{m \in \Nat^n} \clx_2^{\prime m} \big(\clh \otimes \clw(\clx_2^{\prime}) \otimes \ldots \otimes \clw(\clx_n^{\prime}) \big) \subseteq \bigvee_{\bm{m} \in \Nat^n} \clx^{\prime \bm{m}} \clw(\clx^{\prime}).
\]

Following this process for another $(n-2)$ times gives
\[
\crh \subseteq \bigvee_{\bm{m} \in \Nat^n} \clx^{\prime \bm{m}} \clw(\clx^{\prime}) \subseteq \crh.
\]
This proves that $\clx^{\prime}$ possesses the wandering subspace property on $\crh$. Now $(\clx^{\prime})^{\prime} = \clx$ and $\clw(X_i^{\prime}) = \clw(X_i)$ for all $i \in \{1,\ldots,n\}$ implies that $\clx$ possesses the wandering subspace property on $\crh$. This completes the proof.
\end{proof}

\subsection{Reproducing Kernel Hilbert spaces on the unit polydisc}


Let $\clh_{\tilde{k}}$ be the Hardy/\\Bergman/Dirichlet space on unit disc $\mathbb{D}$, then $M_z \in \clb(\clh_{\tilde{k}})$ is a left-invertible operator. Moreover, [Proposition 2.7, \cite{Shimorin}] proves that both $M_z$ and the Cauchy dual $M_z^{\prime}$ possesses the wandering subspace property on $\clh_{\tilde{k}}$. For a Hilbert space $\cle$, let $\clh_{k} \otimes \cle$ denote the $\cle$-valued Hardy/Bergman/Dirichlet kernel on $\D^n$ that is, $k(\bm{z}, \bm{w}) = \Pi_{i=1}^n {\tilde{k}}(z_i, \bm{w_i})$ ($\bm{z}, \bm{w} \in \D^n$). It is well known that the map 
\[
U: \clh_{k}(\D^n) \times \cle \raro (\underset{n\text{-times}}{\underbrace{\clh_{\tilde{k}}(\D) \otimes \ldots \otimes \clh_{\tilde{k}}(\D)}}) \otimes \cle,
\]
defined by $ U(\bm{z}^{\bm{m}} \otimes  \eta) = (z_1^{m_1} \otimes \ldots \otimes z_n^{m_n}) \otimes \eta \quad (\bm{z} \in \D^n, \bm{m} \in \Nat^n, \eta \in \cle),
$ is a unitary and moreover, $U (M_{z_i} \otimes I_{\cle}) = (\tilde{M}_{z_i} \otimes I_{\cle}) U$, where $\tilde{M}_{z_i} = \underset{i\text{-times}}{\underbrace{I_{\clh_{\tilde{k}}} \otimes \ldots \otimes I_{\clh_{\tilde{k}}}}} \otimes M_z \otimes \underset{(n-i)\text{-times}}{\underbrace{I_{\clh_{\tilde{k}}} \otimes \ldots \otimes I_{\clh_{\tilde{k}}}}}$. Using this identification and Theorem \ref{left_char}, we can deduce the following result.
\begin{thm}\label{analytic_powers}
For a Hilbert space $\cle$, let $\clh_k(\D^n) \otimes \cle$ be the $\cle$-valued Hardy/Bergman/ Dirichlet space on the polydisc. Then both $\tilde{M}_{\bm{z}}^{\prime}:=(\tilde{M}_{z_1}^{\prime} \otimes I_{\cle}, \ldots,\tilde{M}_{z_n}^{\prime} \otimes I_{\cle})$ and $\tilde{M}_{\bm{z}}:=(\tilde{M}_{z_1} \otimes I_{\cle}, \ldots,\tilde{M}_{z_n} \otimes I_{\cle})$ possess the wandering subspace property on $\clh_k(\D^n) \otimes \cle$.
\end{thm}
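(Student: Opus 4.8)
The plan is to transport the statement, through the unitary $U$, to the tensor product $\crh\otimes\cle$ with $\crh=\clh_{\tilde{k}}(\D)^{\otimes n}$, prove the wandering subspace property there, and then read it back on $\clh_k(\D^n)\otimes\cle$. Since $U(M_{z_i}\otimes I_\cle)=(\tilde{M}_{z_i}\otimes I_\cle)U$ and the wandering subspace property is invariant under unitary equivalence, it is enough to show that the two tuples $(\tilde{M}_{z_1}\otimes I_\cle,\ldots,\tilde{M}_{z_n}\otimes I_\cle)$ and $(\tilde{M}_{z_1}'\otimes I_\cle,\ldots,\tilde{M}_{z_n}'\otimes I_\cle)$ possess the wandering subspace property on $\crh\otimes\cle$, where $\tilde{M}_{z_i}=\clx_i$ is the $i$-th inflation of the single shift $M_z\in\clb(\clh_{\tilde{k}}(\D))$ and $\tilde{M}_{z_i}'=\clx_i'$ is its Cauchy dual.

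First I would obtain the property on $\crh$ itself via Theorem \ref{left_char}. The input is precisely what was recalled above: on $\clh_{\tilde{k}}(\D)$ the shift $M_z$ is left-invertible (Hardy, weighted Bergman with $-1<\alpha\le 0$, and Dirichlet), and by [Proposition 2.7, \cite{Shimorin}] both $M_z$ and its Cauchy dual $M_z'$ possess the wandering subspace property. Feeding $n$ copies of $M_z$ into the inflation construction produces exactly the tuple $\clx=(\clx_1,\ldots,\clx_n)$; its entries act on distinct tensor factors, so $\clx$ is doubly commuting, while the hypotheses of Theorem \ref{left_char} that are actually used are the left-invertibility and the wandering subspace property of $M_z$ and $M_z'$. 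Thus Theorem \ref{left_char} yields that $\clx$ and $\clx'$ possess the wandering subspace property on $\crh$.

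The remaining step is to carry the passive factor $\cle$ along, which I would isolate as an elementary stability principle: if a commuting tuple $Y=(Y_1,\ldots,Y_n)$ on a Hilbert space $\clk$ has the wandering subspace property, then so does $Y\otimes I_\cle:=(Y_1\otimes I_\cle,\ldots,Y_n\otimes I_\cle)$ on $\clk\otimes\cle$. Indeed $\ker(Y_i\otimes I_\cle)^*=\ker Y_i^*\otimes\cle$, and as intersections distribute over $\otimes\cle$ this gives $\clw(Y\otimes I_\cle)=\clw(Y)\otimes\cle$; then, using $(Y\otimes I_\cle)^{\bm{m}}=Y^{\bm{m}}\otimes I_\cle$ and that the closed linear span distributes over tensoring by a fixed Hilbert space, $\bigvee_{\bm{m}\in\Nat^n}(Y\otimes I_\cle)^{\bm{m}}(\clw(Y)\otimes\cle)=\big(\bigvee_{\bm{m}\in\Nat^n}Y^{\bm{m}}\clw(Y)\big)\otimes\cle=\clk\otimes\cle$. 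Applying this with $Y=\clx$ and $Y=\clx'$, and noting $(\clx_i\otimes I_\cle)'=\clx_i'\otimes I_\cle$ since the Cauchy dual is compatible with $\otimes I_\cle$, finishes the argument.

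Most of the work is already packaged inside Theorem \ref{left_char}, so I expect no serious obstacle. The only point requiring a little care is the tensor-stability principle of the third paragraph, specifically the two distributivity facts $\ker(A\otimes I_\cle)=\ker A\otimes\cle$ and $(\bigvee_\alpha\cls_\alpha)\otimes\cle=\bigvee_\alpha(\cls_\alpha\otimes\cle)$, together with the compatibility of the Cauchy dual with $\otimes I_\cle$; all three are routine Hilbert-space tensor-product computations.
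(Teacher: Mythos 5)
Your proposal is correct and follows essentially the same route as the paper: identify $\clh_k(\D^n)\otimes\cle$ with $(\clh_{\tilde k}(\D)^{\otimes n})\otimes\cle$ via the unitary $U$ and invoke Theorem \ref{left_char} together with the single-variable facts from Shimorin. The only difference is that you explicitly supply the routine tensor-stability step (passing from $\crh$ to $\crh\otimes\cle$ and checking $(\clx_i\otimes I_\cle)'=\clx_i'\otimes I_\cle$), which the paper leaves implicit, and you correctly observe that the double-commutativity hypothesis on the base tuple is not actually needed for the inflated tuple to satisfy it.
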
 

We are now ready to prove the main result of this section.

\begin{proof}[Proof of Theorem \ref{main3}]
Note that $\Phi(\bm{z}) \in \mbox{Mult} (\clh_k(\D^n) \otimes\cle)$ implies that $\Phi(\bm{z}) \in H_{\clb(\cle)}^{\infty}(\D^n)$ and therefore, $M_{\Phi} (M_{z_i} \otimes I_{\cle}) = (M_{z_i} \otimes I_{\cle}) M_{\Phi}$ for $i \in \{1,\ldots,n\}$. This implies that $(\tilde{M}_{z_i} \otimes I_{\cle}) U M_{\Phi} U^* = U M_{\Phi} U^* (\tilde{M}_{z_i} \otimes I_{\cle})$. Now, using Theorem \ref{main1} and Theorem \ref{analytic_powers} we obtain that $U M_{\Phi} U^*$ is a pure contraction on $(\clh_{\tilde{k}}(\D) \otimes \ldots \otimes \clh_{\tilde{k}}(\D)) \otimes \cle$ if and only if $P_{\clw(\tilde{M}_{\bm{z}})} (U M_{\Phi} U^*)|_{\clw(\tilde{M}_{\bm{z}})}$ is a pure contraction on $\clw(\tilde{M}_{\bm{z}})$.  By definition $U^* \clw(\tilde{M}_{\bm{z}}) = \clw(M_{\bm{z}}) = \cle$, which implies that $P_{\clw(\tilde{M}_{\bm{z}})} (U M_{\Phi} U^*)|_{\clw(\tilde{M}_{\bm{z}})}$ is a pure contraction on $\clw(\tilde{M}_{\bm{z}})$  if and only if $\Phi(0)$ is a pure contraction on $\cle$. This completes the proof.
\end{proof}

In the scalar-valued spaces, we have the following result.
\begin{thm}\label{main2}
Let $\clh_k$ be the scalar-valued Hardy/Bergman/Dirichlet space on $\D^n$. Then for any $\phi(\bm{z}) \in \mbox{Mult}_{1} (\clh_k)$, the following statements are equivalent:
\begin{enumerate}
\item[(i)] $M_{\Phi}$ is  a pure contraction on $\clh_K$.
\item[(ii)] $|\Phi(0)|<1$.
\end{enumerate}
\end{thm}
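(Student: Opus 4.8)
The plan is to deduce Theorem \ref{main2} as the scalar-valued special case of Theorem \ref{main3}. Indeed, a scalar-valued space $\clh_k$ on $\D^n$ in the Hardy/Bergman/Dirichlet list is precisely $\clh_k \otimes \cle$ with $\cle = \C$, so a contractive multiplier $\phi \in \mult_1(\clh_k)$ is a (one-dimensional) operator-valued multiplier $\Phi(\bm z)$ with underlying Hilbert space $\cle = \C$. Hence the only thing to check is that, for an operator on the one-dimensional space $\C$, being a \emph{pure contraction} in the sense of Definition (the $T^{*m} \to 0$ strongly condition) is equivalent to $|\phi(0)| < 1$.

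First I would record that multiplication by a scalar $c \in \C$ on $\C$ (equivalently, the operator ``$c$'' on $\C$) is a pure contraction if and only if $|c| < 1$: the operator $\phi(0)$ on $\C$ is just scalar multiplication by the complex number $\phi(0)$, its adjoint is multiplication by $\overline{\phi(0)}$, and $\|(\overline{\phi(0)})^m h\| = |\phi(0)|^m \|h\| \to 0$ for every $h \in \C$ exactly when $|\phi(0)| < 1$; if $|\phi(0)| = 1$ this norm is constant and nonzero, so $\phi(0)$ is not pure (and $|\phi(0)| > 1$ is excluded since $\phi$ is contractive, forcing $|\phi(0)| \le \|M_\phi\| \le 1$). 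Then I would invoke Theorem \ref{main3} with $\cle = \C$: $M_\phi$ is a pure contraction on $\clh_k$ if and only if $\phi(0)$ is a pure contraction on $\C$, and by the previous sentence the latter is equivalent to $|\phi(0)| < 1$. This chains the two equivalences to give $(i) \Leftrightarrow (ii)$.

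There is essentially no obstacle here; the statement is a direct corollary, and the ``hard part'' — the operator-theoretic argument with the $h_{j,m}$ decomposition and the Cauchy-dual wandering subspace property — has already been done in the proof of Theorem \ref{main1}, hence of Theorem \ref{main3}. The only minor care needed is the observation that one must first invoke contractivity of $M_\phi$ to rule out $|\phi(0)| > 1$ before concluding, and that the scalar-valued Hardy/Bergman/Dirichlet spaces on $\D^n$ genuinely fall under the hypotheses of Theorem \ref{main3} via the identification $\clh_k = \clh_k \otimes \C$ (which is exactly the quasi-scalar kernel identification recalled in Section \ref{Sec2}). So the write-up is just: reduce to Theorem \ref{main3} with $\cle = \C$, then compute that purity of a scalar on $\C$ means modulus strictly less than $1$.
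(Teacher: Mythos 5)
Your proposal is correct and matches the paper's treatment: the paper states this result immediately after Theorem \ref{main3} with no separate proof, implicitly deducing it as the scalar case $\cle = \C$, exactly as you do. Your explicit verification that purity of the scalar $\phi(0)$ on $\C$ amounts to $|\phi(0)| < 1$ is the only detail the paper leaves unsaid, and you have it right.
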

Thus, we obtain [Theorem 3.2, \cite{CIL}] as a corollary of our results. In the case of inner multipliers of the Hardy space on polydisc, our methods give an alternative proof of following result observed in [Theorem 4.1, \cite{DPS}].
\begin{cor}
Let $\theta(\bm{z}) \in H^{\infty}(\D^n)$ be a non-constant inner function (that is, $|\theta(\bm{z})| = 1$ a.e. on $\mathbb{T}^n$). Then $M_{\theta}$ is always a pure isometry on $H^2(\D^n)$.
\end{cor}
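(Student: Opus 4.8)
The plan is to apply Theorem~\ref{main3} with $\cle = \Comp$, noting that a scalar $\theta(0) \in \Comp$ with $|\theta(0)| \le 1$ is a pure contraction on $\Comp$ precisely when $|\theta(0)| < 1$: the only non-pure contraction on $\Comp$ is a unimodular scalar, since $(\theta(0)^*)^m = \overline{\theta(0)}^m \to 0$ iff $|\theta(0)| < 1$. So the first step is to establish $|\theta(0)| < 1$ for a non-constant inner $\theta$. Since $\theta$ is inner and non-constant, it is not a unimodular constant; by the maximum modulus principle (applied on $\D^n$, or coordinate-wise), a non-constant function in $H^\infty(\D^n)$ with $|\theta| = 1$ a.e.\ on $\mathbb{T}^n$ must satisfy $\|\theta\|_\infty = 1$ and $|\theta(\bm{w})| < 1$ for every $\bm{w} \in \D^n$; in particular $|\theta(0)| < 1$.

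The second step is to invoke Theorem~\ref{main3} (or equivalently Theorem~\ref{main2}) with $\clh_k = H^2(\D^n)$: since $M_\theta$ is contractive (inner functions are contractive multipliers of the Hardy space, as $|\theta| \le 1$ on $\D^n$), and $|\theta(0)| < 1$, we conclude that $M_\theta$ is a pure contraction on $H^2(\D^n)$.

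The third step is to upgrade ``pure contraction'' to ``pure isometry'' by checking that $M_\theta$ is an isometry. This is where the inner hypothesis does real work: for $f \in H^2(\D^n)$, the boundary values give
\[
\|M_\theta f\|^2 = \int_{\mathbb{T}^n} |\theta(\zeta)|^2 |f(\zeta)|^2 \, dm(\zeta) = \int_{\mathbb{T}^n} |f(\zeta)|^2 \, dm(\zeta) = \|f\|^2,
\]
using $|\theta| = 1$ a.e.\ on $\mathbb{T}^n$. Hence $M_\theta$ is an isometry, and a pure isometry since it is a pure contraction.

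I expect the only real subtlety to be the first step, namely ruling out $|\theta(0)| = 1$ for non-constant inner $\theta$; everything else is bookkeeping. This follows from the open mapping theorem / maximum modulus principle on the polydisc: if $|\theta(\bm{w}_0)| = 1$ for some interior point $\bm{w}_0$, then $\theta$ would attain an interior maximum of $|\theta|$ and hence be constant. Alternatively, one can restrict $\theta$ to a complex line through $\bm{w}_0$ to reduce to the one-variable maximum modulus principle. Thus the non-constancy of $\theta$ forces $|\theta(0)| < 1$, and the result follows.
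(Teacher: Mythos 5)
Your proof is correct and follows essentially the same route as the paper: the maximum modulus principle on $\D^n$ gives $|\theta(0)|<1$ for non-constant inner $\theta$, and Theorem \ref{main2} applied to $H^2(\D^n)$ then yields that $M_\theta$ is a pure contraction. The only difference is that the paper leaves implicit the (standard) fact that $M_\theta$ is an isometry when $|\theta|=1$ a.e.\ on $\mathbb{T}^n$, whereas you spell it out via the boundary-value computation.
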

\begin{proof}
By the maximum principle, $|\theta(0)|<1$ for any non-constant inner function $\theta(\bm{z}) \in H^{\infty}(\D^n)$. Now, the proof follows by applying Theorem \ref{main2} for $H^2(\D^n)$.
\end{proof}

\begin{rem}
From our discussion on left-invertibility of shift operators in the Section \ref{Sec2}, it follows that  Theorem \ref{main3} can be extended for weighted Bergman spaces $A_{\alpha, \cle}(\mathbb{D}^n)$, where $-1 < \alpha \leq 0$, as well.
\end{rem}

We are now interested in finding a result involving the restriction of mulitplication operators on certain $M_{\bm{z}}$-joint invariant subspaces of the $H^2(\D^n)$. For proving this result, let us first recall that a $M_{\bm{z}}$-joint invariant closed subspace of $H^2(\D^n)$ is said to be \textit{doubly commuting} if  the commutator $[R_{z_i}^*,R_{z_j}] = 0$ for $i,j \in \{1,\ldots,n\}$ such that $i \neq j$, where $R_{z_i} := M_{z_i}|_{\cls}$. By [Theorem 1.3, \cite{jaydeb4}] any such $\cls$ is of the form $\theta H^2(\D^n)$ for some inner function $\theta \in H^{\infty}(\D^n)$. For the purpose of the next result, we shall call $\cls= \theta H^2(\mathbb{D}^n)$ to be a \textit{non-zero} based doubly commuting $M_{\bm{z}}$-joint invariant subspace of $H^2(\mathbb{D}^n)$, if $\theta(0) \neq 0$.

\begin{thm}\label{main4}
Let $\phi \in H^{\infty}(\D^n)$ be a contractive multiplier. Then $M_{\phi}$ is a pure contraction on $H^2(\D^n)$ if and only if there exists a non-zero based doubly commuting $M_{\bm{z}}$-joint invariant subspace of $H^2(\mathbb{D}^n)$ such that $M_{\phi}|_{\cls}$ is a pure contraction.
\end{thm}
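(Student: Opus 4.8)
The plan is to prove both directions by exploiting the structure of doubly commuting $M_{\bm z}$-joint invariant subspaces together with Theorem \ref{main3}. First I would settle the easy direction. Suppose $M_\phi$ is a pure contraction on $H^2(\D^n)$. I want a \emph{single} non-zero based doubly commuting invariant subspace $\cls$ on which the restriction is still a pure contraction; the obvious candidate is $\cls = H^2(\D^n)$ itself, which corresponds to the inner function $\theta \equiv 1$, so $\theta(0) = 1 \neq 0$, and $M_\phi|_{\cls} = M_\phi$ is pure by hypothesis. This makes the ``only if'' direction immediate, so the content of the theorem lies entirely in the converse.

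For the converse, assume $\cls = \theta H^2(\D^n)$ with $\theta \in H^\infty(\D^n)$ inner and $\theta(0) \neq 0$, and that $M_\phi|_{\cls}$ is a pure contraction. The key observation is that $M_\theta : H^2(\D^n) \to \cls$ is a unitary (an isometry since $\theta$ is inner, onto $\cls$ by definition) which intertwines: $M_\theta M_\phi = M_\phi M_\theta$ (both are multiplication operators, hence commute), so $M_\theta^* (M_\phi|_{\cls}) M_\theta = M_\phi$ on $H^2(\D^n)$. Therefore $M_\phi|_{\cls}$ being a pure contraction is \emph{equivalent} to $M_\phi$ being a pure contraction on all of $H^2(\D^n)$ — wait, that would make the theorem trivial, so I must be more careful: the point is that $M_\theta$ maps $H^2(\D^n)$ \emph{onto} $\cls$, and under this unitary identification the operator $M_\phi|_{\cls}$ is unitarily equivalent to $M_\phi$ on $H^2(\D^n)$. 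Hence $M_\phi|_{\cls}$ pure $\iff M_\phi$ pure. Actually the honest route is: apply Theorem \ref{main3} to reduce ``$M_\phi|_\cls$ pure'' and ``$M_\phi$ pure'' each to a condition at $0$. By Theorem \ref{main3} (scalar case, i.e. Theorem \ref{main2}), $M_\phi$ pure on $H^2(\D^n)$ $\iff |\phi(0)| < 1$. So I must show: $M_\phi|_\cls$ pure $\iff |\phi(0)| < 1$ as well, using $\theta(0) \neq 0$.

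For this I would compute the compression of $M_\phi|_\cls$ to the wandering subspace of the restricted shift tuple $(R_{z_1},\dots,R_{z_n})$ on $\cls$. Since $\cls$ is doubly commuting, $(R_{z_1},\dots,R_{z_n})$ is a doubly commuting tuple; moreover $\cls = \theta H^2(\D^n)$ with $M_\theta$ the intertwining unitary, so by Lemma \ref{lemma_wander} (with $\Pi = M_\theta$ a partial isometry, in fact isometry, and $\ker\Pi = 0$) the wandering subspace of $(R_{z_1},\dots,R_{z_n})$ is $M_\theta(\clw(M_{\bm z})) = M_\theta(\C) = \C\,\theta$, the one-dimensional span of $\theta$. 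Now $M_\phi|_\cls$ restricted-compressed to $\C\theta$ acts by $\theta \mapsto P_{\C\theta}(\phi\theta)$, and since $\theta(0)\neq 0$ one checks that $\langle \phi\theta, \theta\rangle / \|\theta\|^2 = \phi(0)$ — this is where the hypothesis $\theta(0)\neq 0$ is essential, as it guarantees $\theta \notin \sum_i R_{z_i}\cls$, i.e. $\theta$ genuinely spans the wandering subspace and the compression at this vector reads off $\phi(0)$. The main obstacle, and the step requiring care, is verifying that $(R_{z_1},\dots,R_{z_n})$ and its Cauchy-dual tuple satisfy the wandering subspace property on $\cls$ so that Theorem \ref{main1} applies; but $R_{z_i}$ is unitarily equivalent via $M_\theta$ to $M_{z_i}$ on $H^2(\D^n)$, for which the wandering subspace property (for the tuple and its Cauchy dual) was established in Theorem \ref{analytic_powers}, so this transfers cleanly. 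Then Theorem \ref{main1} gives: $M_\phi|_\cls$ pure $\iff$ its compression to $\C\theta$ is pure $\iff |\phi(0)| < 1 \iff M_\phi$ pure on $H^2(\D^n)$, completing the proof.
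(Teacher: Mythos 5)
Your proposal is correct, but it reaches the conclusion by a different route than the paper. The paper's converse is a one-line reproducing-kernel computation: it tests purity of $M_{\phi}|_{\cls}$ against the single vector $P_{\cls}1=\overline{\theta(0)}\,\theta$ and uses $P_{\cls}=M_{\theta}M_{\theta}^{*}$ together with $M_{\phi}^{*m}k_{0}=\overline{\phi(0)}^{m}k_{0}$ to get $\|P_{\cls}M_{\phi}^{*m}P_{\cls}1\|=|\theta(0)||\phi(0)|^{m}$, whence $|\phi(0)|<1$ and Theorem \ref{main2} finishes; this is exactly where the ``non-zero based'' hypothesis enters, since it guarantees the test vector $P_{\cls}1$ is non-zero. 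You instead offer two arguments: first, the observation that $M_{\theta}:H^{2}(\D^{n})\to\cls$ is a unitary intertwining $M_{\phi}$ with $M_{\phi}|_{\cls}$, so the two operators are unitarily equivalent and purity transfers immediately --- this is correct, is cleaner than the paper's argument, and in fact proves the stronger statement in which the hypothesis $\theta(0)\neq 0$ is dropped entirely (you should not have talked yourself out of it); second, the route through Theorem \ref{main1} applied to the restricted tuple $(R_{z_1},\dots,R_{z_n})$ on $\cls$, whose wandering subspace is $\mathbb{C}\theta$ by Lemma \ref{lemma_wander} and on which the compression of $M_{\phi}|_{\cls}$ is the scalar $\langle\phi\theta,\theta\rangle=\phi(0)$; this also works (the tuple consists of doubly commuting isometries, so the wandering subspace property holds as in Corollary \ref{Hardy_pure2}). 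One small correction: the hypothesis $\theta(0)\neq 0$ is not ``essential'' where you place it --- the identity $\langle\phi\theta,\theta\rangle=\int_{\mathbb{T}^{n}}\phi\,|\theta|^{2}\,dm=\phi(0)$ and the fact that $\theta\notin\sum_{i}z_{i}\theta H^{2}(\D^{n})$ hold for every inner $\theta$ (evaluate $1=\sum_i z_i f_i$ at the origin to see the latter), so neither of your arguments actually uses it; only the paper's choice of test vector does.
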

\begin{proof}
Observe that $M_{\phi}$ is a pure contraction on $H^2(\D^n)$ will always imply $M_{\phi}|_{\cls}$ is a pure contraction for any $M_{\bm{z}}$-joint invariant closed subspace $\cls$. Now, if there exists such a subspace $\cls$ for which $M_{\phi}|_{\cls}$ is a pure contraction, then for any $f \in \cls$ we have
\[
\lim_{n \raro \infty} \|P_{\cls} M_{\Phi}^{*n} f\| = 0.
\]
In particular, for $f(z) = P_{\cls} k_{0}(\bm{z}) = P_{\cls}1$ (note that $\theta(0) \neq 0$ shows that $P_{\cls}1 \neq 0$) we have
\[
\|P_{\cls} M_{\phi}^{*n} P_{\cls}1\| = \|M_{\theta} M_{\theta}^* M_{\phi}^{*n} M_{\theta} M_{\theta}^* k_{0}(\bm{z}) \| = \|\overline{\theta (0)} M_{\phi}^{*n} k_{0}(\bm{z})\| = |\overline{\theta(0)} \hspace{1mm}\overline{\phi(0)}^{n}|.
\]
This implies that $|\overline{\phi(0)}^{n}| \raro 0$ as $n \raro \infty$ and therefore, by Corollary \ref{main2}, it follows that $M_{\Phi}$ is a pure contraction on $H^2(\D^n)$. This completes the proof. 
\end{proof}

\section{Pure contractive multipliers on the unit ball}\label{Sec4}
In this section, we obtain a characterization for pure contractive multipliers of several reproducing kernel Hilbert spaces on $\mathbb{B}_n$. For this purpose, we are interested in the following type of rkHs's.
\begin{defn}
A $\cle$-valued reproducing kernel Hilbert space $\clh_k \otimes \cle$ on $\mathbb{B}_n$ is said to be \textit{regular} if it satisfies the following conditions:\\
$(i) \hspace{1mm}\clh_k \otimes \cle \subset \clo(\mathbb{B}_n, \cle)$; \\
$(ii) \hspace{1mm} M_{\bm{z}}= (M_{z_1},\ldots,M_{z_n})$ is a tuple of bounded operators on $\clh_k \otimes \cle$;\\
$(iii)$  the collection of monomials form an orthogonal basis of $\clh_k \otimes \cle.$\\
$(iv)$ $\bigcap_{i=1}^n \ker M_{z_i}^* = \cle$.
\end{defn}

The author is grateful to Prof. Kunyu Guo at the Fudan University, China for a correspondence that led to establishing the following result.

\begin{propn}\label{wand_sub}
Let $\clh_k \otimes \cle$ be a $\cle$-valued regular rkHs on $\mathbb{B}_n$ and let $\clm$ be a $M_{\bm{z}}$-joint invariant closed subspace of $\clh_k \otimes \cle$. Then $\clm \neq \{0\}$ if and only if $\clw(M_{\bm{z}}|_{\clm}) \neq \{0\}$.
\end{propn}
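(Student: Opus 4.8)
The plan is to mirror the strategy of Proposition \ref{wander_opt}, which handled the case of analytic tuples, by showing that a regular rkHs automatically has the relevant "analyticity at the origin" built into its structure. The key point is that the conditions $(i)$--$(iv)$ in the definition of a regular rkHs force $M_{\bm{z}} = (M_{z_1},\ldots,M_{z_n})$ to be analytic in the sense that $\bigcap_{l=0}^\infty \sum_{|\bm{m}|=l} M_{\bm{z}}^{\bm{m}}(\clh_k \otimes \cle) = \{0\}$. Indeed, since the monomials $\{\bm{z}^{\bm{\alpha}} \otimes \eta : \bm{\alpha} \in \Nat^n, \eta \in \cle\}$ form an orthogonal basis (condition $(iii)$) and $\clh_k \otimes \cle$ consists of honest $\cle$-valued holomorphic functions on $\mathbb{B}_n$ (condition $(i)$), the subspace $\sum_{|\bm{m}|=l} M_{\bm{z}}^{\bm{m}}(\clh_k \otimes \cle)$ is exactly the closed span of those monomials $\bm{z}^{\bm{\alpha}}\otimes\eta$ with $|\bm{\alpha}| \geq l$; intersecting over all $l$ leaves only the zero function. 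So the first step I would carry out is to record this observation as a short lemma or an inline remark.

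Once analyticity of $M_{\bm{z}}$ is established, the proof proceeds exactly as in Proposition \ref{wander_opt}. One direction is trivial: if $\clm = \{0\}$ then $\clw(M_{\bm{z}}|_{\clm}) = \{0\}$. For the converse, suppose $\clw(M_{\bm{z}}|_{\clm}) = \{0\}$, i.e. $\clm = \overline{\sum_{i=1}^n M_{z_i}\clm}$. Using the elementary fact (recalled in the proof of Proposition \ref{wander_opt}) that $\bigvee_{|\bm{m}|\geq l} M_{\bm{z}}^{\bm{m}}(\cls) = \bigvee_{|\bm{m}|\geq l} M_{\bm{z}}^{\bm{m}}(\overline{\cls})$ for any subset $\cls$, one iterates the identity $\clm = \bigvee_{|\bm{m}|\geq 1} M_{\bm{z}}^{\bm{m}}(\clm) = \bigvee_{|\bm{m}|\geq 2} M_{\bm{z}}^{\bm{m}}(\clm) = \cdots$ to conclude that every $h\in\clm$ lies in $\bigcap_{l=0}^\infty \sum_{|\bm{m}|=l} M_{\bm{z}}^{\bm{m}}(\clh_k\otimes\cle)$, which is $\{0\}$ by the analyticity just established. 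Hence $\clm = \{0\}$, completing the proof.

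The only slightly delicate point — and the one I would be most careful about — is verifying that $\sum_{|\bm{m}|=l} M_{\bm{z}}^{\bm{m}}(\clh_k\otimes\cle)$ is indeed a \emph{closed} subspace equal to the span of high-degree monomials, and more precisely that $\bigcap_l \sum_{|\bm{m}|=l} M_{\bm{z}}^{\bm{m}}(\clh_k\otimes\cle) = \{0\}$. Here condition $(i)$ does the real work: a function $f$ in the intersection would have vanishing Taylor coefficients at $0$ up to every order, hence vanish identically as a holomorphic function on $\mathbb{B}_n$. One should phrase this in terms of the orthogonal basis of monomials from $(iii)$ so as not to invoke more than is available; condition $(iv)$, $\bigcap_i \ker M_{z_i}^* = \cle$, pins down the degree-zero part. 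I do not expect any genuine obstacle here, since all the needed structure is packaged into the definition of regularity; the proof is essentially a verification that a regular rkHs satisfies the hypothesis of Proposition \ref{wander_opt} with $X = M_{\bm{z}}$ and $\clh = \clh_k \otimes \cle$, after which that proposition applies verbatim.
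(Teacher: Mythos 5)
Your proposal is correct and follows essentially the same route as the paper: the paper's proof likewise observes that any $f$ in $\bigcap_{l}\sum_{|\bm{m}|=l} M_{\bm{z}}^{\bm{m}}(\clh_k\otimes\cle)$ would have arbitrarily high vanishing order at $0$ and hence be zero, so that $M_{\bm{z}}$ satisfies the analyticity hypothesis of Proposition \ref{wander_opt}, which then yields the result. Your extra care about the orthogonal basis of monomials is a harmless elaboration of the same idea.
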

\begin{proof}
It is easy to observe that $\bigcap_{l=0}^{\infty} \sum_{|\bm{m}| = l} M_{\bm{z}}^{\bm{m}} \clh_k \otimes \cle = \{0\}$. Otherwise, the order at zero for any function $f \in \bigcap_{l=0}^{\infty} \sum_{|\bm{m}| = l} M_{\bm{z}}^{\bm{m}} \clh_k \otimes \cle$, can be increased arbitrarily and this is possible only when $f = 0$. Thus, $M_{\bm{z}}$ satisfies the assumption of  Proposition \ref{wander_opt} and therefore, we get that $\clw(M_{z}|_{\clm}) \neq \{0\}$ if and only if $\clm \neq \{0\}$. This completes the proof.

\end{proof}

In the setting of the unit ball, we will now prove a  result that is analogous to Lemma \ref{vector2}.

\begin{lemma}\label{cnp_lem}
Let $\clh_k \otimes \cle$ be a $\cle$-valued regular rkHs on $\mathbb{B}_n$ such that 
\begin{enumerate}
\item[(i)] $k/s \geq 0$, where $s$ is a cnp kernel normalized  at $0$;
\item[(ii)] $\clh_s$ is a regular rkHs on $\mathbb{B}_n$.
\end{enumerate}
Let $\clm$ be a proper $M_{\bm{z}}$-invariant closed subspace of $\clh_k \otimes \cle$ such that $\clw(M_{\bm{z}}) \subseteq \clm^{\perp}$. Then there exists a non-zero vector $\eta \in \clm$ such that $M_{z_i}^* \eta \in \clm^{\perp}$ for all $i \in \{1,\ldots,n\}$.
\end{lemma}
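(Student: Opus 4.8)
The plan is to run the single-variable argument behind the Remark following Lemma~\ref{vector2}, but with the genuine Beurling--Lax--Halmos substitute that is available on the ball. Hypotheses (i) and (ii) are exactly what is needed to trigger Theorem~\ref{Beurling_cnp}: the condition $k/s\ge 0$ with $s$ a cnp kernel gives a partially isometric multiplier representation of $\clm$, and the regularity of $\clh_s$ pins down the relevant wandering subspace. Just as $\clm = \Theta H_{\clf}^2(\D)$ produces the desired vector as $M_z^*M_\Theta\eta$ for $\eta\in\clf$ in the Hardy setting, here the cnp factor will furnish $\eta$ from the constants of $\clh_s\otimes\clf$.

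First I would observe that, being proper, $\clm$ is nonzero and $M_{\bm{z}}$-invariant, so Theorem~\ref{Beurling_cnp} applies and yields an auxiliary space $\clf$ and a partially isometric multiplier $\Gamma\in\mult(\clh_s\otimes\clf,\,\clh_k\otimes\cle)$ with $\clm = M_\Gamma(\clh_s\otimes\clf)$, hence $M_\Gamma M_\Gamma^* = P_\clm$, $M_\Gamma^*M_\Gamma = P_{(\ker M_\Gamma)^\perp}$, and the intertwining $M_{z_i}M_\Gamma = M_\Gamma M_{z_i}$ for each $i$ (the left shift acting on $\clh_k\otimes\cle$, the right one on $\clh_s\otimes\clf$). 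Using condition (ii), regularity of $\clh_s$ identifies the wandering subspace of the shift on $\clh_s\otimes\clf$ with the constants, namely $\bigcap_i\ker M_{z_i}^* = \clf$. I would then feed this into Lemma~\ref{lemma_wander} with $\Pi = M_\Gamma$, $A_i=B_i=M_{z_i}$ on the respective spaces and $\cls=\clm$, obtaining the explicit description
\[
\clw(M_{\bm{z}}|_{\clm}) = M_\Gamma\big((\ker M_\Gamma)^\perp\cap\clf\big).
\]
The point to record is that the conclusion of the lemma is \emph{precisely} the assertion $\clw(M_{\bm{z}}|_{\clm})\neq\{0\}$, since a nonzero $\eta\in\clm$ with $M_{z_i}^*\eta\in\clm^\perp$ for all $i$ is nothing but a nonzero element of $\bigcap_i\ker(P_\clm M_{z_i}^*|_{\clm})$.

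It therefore remains to show $(\ker M_\Gamma)^\perp\cap\clf\neq\{0\}$, and for this I would invoke Proposition~\ref{wand_sub}: since $\clh_k\otimes\cle$ is regular and $\clm$ is a nonzero $M_{\bm{z}}$-invariant subspace, $\clw(M_{\bm{z}}|_{\clm})\neq\{0\}$, and because $M_\Gamma$ is isometric on $(\ker M_\Gamma)^\perp$ the displayed identity forces the intersection to be nonzero. Choosing any nonzero $g\in(\ker M_\Gamma)^\perp\cap\clf$ and setting $\eta:=M_\Gamma g$ gives $\eta\neq0$, and for every $f\in\clh_s\otimes\clf$ the computation
\[
\langle M_{z_i}^*\eta,\,M_\Gamma f\rangle = \langle M_\Gamma^*M_\Gamma g,\, z_i f\rangle = \langle g,\, z_i f\rangle = 0,
\]
where the last equality holds because $g$ is a constant while $z_i f\in\sum_j M_{z_j}(\clh_s\otimes\clf) = (\clh_s\otimes\clf)\ominus\clf$, confirms that $M_{z_i}^*\eta\in\clm^\perp$ for all $i$.

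The step I expect to be delicate is precisely the one where the ball differs from the disc: the Beurling multiplier $\Gamma$ is only a \emph{partial} isometry, so $M_\Gamma^*M_\Gamma$ is not the identity and a bare constant $g\in\clf$ need not satisfy $M_\Gamma^*M_\Gamma g = g$. This is why the vector must be drawn from $(\ker M_\Gamma)^\perp\cap\clf$ rather than from $\clf$, and why the true crux is the non-triviality of this intersection (not merely of $\clf$); I resolve it by coupling the structural identity from Lemma~\ref{lemma_wander} with the analyticity-based non-vanishing of Proposition~\ref{wand_sub}, which together bypass the absence of an isometric (inner) representation in this setting.
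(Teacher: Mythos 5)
Your proof is correct and follows essentially the same route as the paper's: Theorem \ref{Beurling_cnp} to represent $\clm$ via a partially isometric multiplier $\Gamma$, Lemma \ref{lemma_wander} to identify $\clw(M_{\bm{z}}|_{\clm})$ with $M_{\Gamma}\big((\ker M_{\Gamma})^{\perp}\cap\clf\big)$, Proposition \ref{wand_sub} to see this is non-zero, and the same inner-product computation to verify $M_{z_i}^*\eta\in\clm^{\perp}$. The only difference is organizational: the paper argues by contradiction in two cases, whereas you run the argument forward after observing that the conclusion is precisely the statement $\clw(M_{\bm{z}}|_{\clm})\neq\{0\}$.
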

\begin{proof}

At first, let us observe that $\clm$ cannot be a $M_{\bm{z}}$-joint reducing subspace of $\clh_k \otimes \cle$ as otherwise, the assumption $\clw(M_{\bm{z}})  = \cle \subseteq \clm^{\perp}$ will contradict that $\clm$ is a proper subspace. 

From Theorem \ref{Beurling_cnp}, it follows that there exists a Hilbert space $\clf$ and a partially isometric multiplier $\Gamma(\bm{z}) \in \mbox{Mult}(\clh_s \otimes \clf, \clh_k \otimes \cle)$ such that
\begin{equation}\label{beurling_cnp}
\clm^{\perp} = \clh_k \otimes \cle \ominus \Gamma (\bm{z}) \big(\clh_s \otimes \clf \big).
\end{equation}
The proof can be divided into the following cases.

Case (i): If there exists a non-zero vector $\eta \in \ran M_{\Gamma}^* \cap \clf$, then for any $f \in \clh_s \otimes \clf$ and $i \in \{1,\ldots,n\}$, we have
\begin{align*}
\langle M_{z_i}^* M_{\Gamma} \eta, M_{\Gamma} f \rangle = \langle M_{\Gamma} \eta, M_{z_i} M_{\Gamma} f \rangle = \langle M_{\Gamma}^* M_{\Gamma} \eta, M_{z_i} f \rangle = \langle \eta, M_{z_i} f \rangle =0.
\end{align*}
This implies that $M_{z_i}^* M_{\Gamma}\eta \in \clm^{\perp}$ for all $i \in \{1,\ldots,n\}$. 

Case (ii): We will obtain a contradiction in the case of $\ran M_{\Gamma}^*\cap \clf = \{0\}$. From equation (\ref{beurling_cnp}) we get that $M_{\Gamma}: \clh_s \otimes \clf \raro \clm$ is a partial isometry and moreover, $\Gamma(\bm{z})$ is a multiplier implies that $M_{\Gamma} M_{z_i} = M_{z_i}|_{\clm} M_{\Gamma}$ for all $i \in \{1,\ldots,n\}$. Now applying Lemma \ref{lemma_wander} gives
\[
\clw(M_{\bm{z}}|_{\clm}) = M_{\Gamma} (\ran M_{\Gamma}^*\cap \clf) = \{0\}.
\]
From Proposition \ref{wand_sub} we have $\clm = \{0\}$, which is a contradiction to our assumption that $\clm$ is proper. This completes the proof.

\end{proof}

We are now ready to establish our main result for rkHs's $H_m(\mathbb{B}_n,\cle)$, for integers $m \geq 1$. First, let us note the following operator identity that has been established in [Lemma 2, \cite{ES}].
\begin{equation}\label{identity}
P_{\cle} = I_{H_m(\mathbb{B}_n, \cle)} - \sum_{j=0}^{m-1} (-1)^j {m \choose j+1} \sum_{| \alpha | = j+1} \gamma_{\alpha} M_{\bm{z}}^{\alpha} M_{\bm{z}}^{*\alpha} \quad (\alpha \in \Nat^n),
\end{equation}
where $\gamma_{\alpha}$ is the coefficient appearing in the following sum
\[
K_m^{-1}(\bm{z},\bar{\bm{w}}) = (1 - \langle \bm{z}, \bar{\bm{w}} \rangle)^{m} I_{\cle} = \sum_{j=0}^m (-1)^j {m \choose j} \sum_{|\alpha| = j}\gamma_{\alpha} \bm{z}^{\alpha} \bm{w}^{\alpha} I_{\cle} \quad (\bm{z},\bm{w} \in \mathbb{B}_n).
\]
Note that $m \geq 1$ implies that the kernel $K_m$ of the rkHs $H_m(\mathbb{B}_n,\cle)$ always admits a cnp factor $s = (1 - \langle \bm{z}, \bm{w} \rangle)^{-1}$, that is the kernel for the Drury-Arveson space.
\begin{thm}\label{Berg_char}
Let $\Phi \in \mbox{Mult}_1(H_m(\mathbb{B}_n,\cle))$, where $m \geq 1$ (and $m \in \Nat$). Then the following are equivalent
\begin{enumerate}
\item[(i)]  $M_{\Phi}$ is a pure contraction on $H_m(\mathbb{B}_n,\cle)$,
\item[(ii)] $\Phi(0)$ is a pure contraction on $\cle$.
\end{enumerate}
\end{thm}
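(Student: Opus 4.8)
The plan is to run the argument of Theorem \ref{main1} with $T=M_{\Phi}$ and the commuting tuple $M_{\bm{z}}=(M_{z_1},\ldots,M_{z_n})$, but to replace the two ingredients of that proof which relied on left-invertibility (the projection formula (\ref{eqn1}) and Lemma \ref{converge}) by the operator identity (\ref{identity}) of \cite{ES} together with the reproducing structure of a regular rkHs on $\mathbb{B}_n$. The first step is to locate the wandering subspace: since $K_m(\cdot,0)=I_{\cle}$, the subspace $\cle$ of constant functions is exactly $\clw(M_{\bm{z}})=\bigcap_i\ker M_{z_i}^*$, it is $M_{\Phi}^*$-invariant, and $M_{\Phi}^*|_{\cle}=\Phi(0)^*$ can be read off from $M_{\Phi}^*K_m(\cdot,0)\eta=K_m(\cdot,0)\Phi(0)^*\eta$. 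This yields $(i)\Rightarrow(ii)$ at once, because $\|\Phi(0)^{*p}\eta\|=\|M_{\Phi}^{*p}\eta\|\to 0$ for every $\eta\in\cle$.

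For $(ii)\Rightarrow(i)$ I would introduce the positive operator $A_T$ with $A_T^2=\textup{SOT-}\lim_p M_{\Phi}^pM_{\Phi}^{*p}$, so that purity of $M_{\Phi}$ is equivalent to $\ker A_T=H_m(\mathbb{B}_n,\cle)$. Two observations set up the contradiction. First, $\cle\subseteq\ker A_T$, since for $\eta\in\cle$ one has $\|A_T\eta\|=\lim_p\|\Phi(0)^{*p}\eta\|=0$ by hypothesis $(ii)$. Second, $\ker A_T$ is invariant under each $M_{z_i}^*$: as $M_{z_i}^*$ commutes with $M_{\Phi}^*$, for $h\in\ker A_T$ we get $\|M_{\Phi}^{*p}M_{z_i}^*h\|=\|M_{z_i}^*M_{\Phi}^{*p}h\|\le\|M_{z_i}^*\|\,\|M_{\Phi}^{*p}h\|\to 0$. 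Hence $\clm:=(\ker A_T)^{\perp}$ is $M_{\bm{z}}$-invariant and $\clw(M_{\bm{z}})=\cle\subseteq\clm^{\perp}$.

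Suppose now, for contradiction, that $M_{\Phi}$ is not pure, i.e. $\clm\neq\{0\}$; since $\cle\subseteq\ker A_T$, the subspace $\clm$ is proper. Because $m\ge 1$, the kernel $K_m=(1-\langle\cdot,\cdot\rangle)^{-m}$ has the cnp factor $s=(1-\langle\cdot,\cdot\rangle)^{-1}$ (so $k/s=(1-\langle\cdot,\cdot\rangle)^{-(m-1)}\ge 0$ and $\clh_s$ is the regular Drury--Arveson space), so Lemma \ref{cnp_lem} applies and produces a nonzero $\eta\in\clm=(\ker A_T)^{\perp}$ with $M_{z_i}^*\eta\in\clm^{\perp}=\ker A_T$ for every $i$. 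Combined with the $M_{z_i}^*$-invariance of $\ker A_T$, this gives $M_{\bm{z}}^{*\alpha}\eta\in\ker A_T$ for all $|\alpha|\ge 1$. Applying the finite identity (\ref{identity}) to $M_{\Phi}^{*p}\eta$ I would write
\[
M_{\Phi}^{*p}\eta = P_{\cle}M_{\Phi}^{*p}\eta + \sum_{j=0}^{m-1}(-1)^j\binom{m}{j+1}\sum_{|\alpha|=j+1}\gamma_{\alpha}\,M_{\bm{z}}^{\alpha}M_{\bm{z}}^{*\alpha}M_{\Phi}^{*p}\eta,
\]
act by $A_T$, and use $\|A_TM_{\Phi}^{*p}\eta\|=\|A_T\eta\|$. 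The first term vanishes since $P_{\cle}M_{\Phi}^{*p}\eta\in\cle\subseteq\ker A_T$, and each remaining summand is bounded by $\|M_{\bm{z}}^{\alpha}\|\,\|M_{\bm{z}}^{*\alpha}M_{\Phi}^{*p}\eta\|=\|M_{\bm{z}}^{\alpha}\|\,\|M_{\Phi}^{*p}(M_{\bm{z}}^{*\alpha}\eta)\|\to 0$ as $p\to\infty$, because $M_{\bm{z}}^{*\alpha}\eta\in\ker A_T$. Therefore $\|A_T\eta\|=0$, so $\eta\in\ker A_T\cap(\ker A_T)^{\perp}=\{0\}$, contradicting $\eta\neq 0$; hence $\ker A_T=H_m(\mathbb{B}_n,\cle)$ and $M_{\Phi}$ is pure.

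I expect the main obstacle to be conceptual rather than computational: on $H_m(\mathbb{B}_n,\cle)$ the shifts $M_{z_i}$ are not left-invertible, so the decomposition (\ref{eqn1}) and the descent via Lemma \ref{converge} used in Theorem \ref{main1} are unavailable. The crux is recognizing that the finitely-many-term identity (\ref{identity}) can play their combined role, since it already displays $M_{\Phi}^{*p}\eta$ as a constant (wandering) part plus terms each of which, after commuting $M_{\bm{z}}^{*\alpha}$ through $M_{\Phi}^{*p}$, is annihilated by $A_T$ in the limit. The two points that require genuine care are the verification that $M_{\bm{z}}^{*\alpha}\eta\in\ker A_T$ for all $|\alpha|\ge 1$ and the checking of the hypotheses of Lemma \ref{cnp_lem} (the cnp-factor condition $k/s\ge 0$ and regularity of $\clh_s$), both of which hold precisely because $m\ge 1$.
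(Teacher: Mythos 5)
Your proposal is correct and follows essentially the same route as the paper: both directions hinge on Lemma \ref{cnp_lem} applied to $\clm=(\ker A_{M_\Phi})^{\perp}$ together with the Eschmeier identity (\ref{identity}), concluding by the contradiction $\|A_{M_\Phi}\eta\|=0$. Your only (harmless) deviation is to apply $A_{M_\Phi}$ directly to the identity expansion of $M_{\Phi}^{*p}\eta$ rather than first showing $\|(I-P_{\cle})M_{\Phi}^{*l}\eta\|\to 0$ as the paper does; the estimates are the same.
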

\begin{proof}
$(i)$ implies $(ii)$ is easy to observe since for any $\eta \in \cle$ we have
\[
\lim_{l \raro \infty} \|\Phi(0)^{*l} \eta\| = \lim_{l \raro \infty} \|M_{\Phi}^{*l} K_m(\cdot,0) \eta\| = 0.
\]

$(ii) \implies (i)$: It is easy to observe that $\ker A_{M_{\Phi}}$ is a $(M_{z_1}^*,\ldots,M_{z_n}^*)$-joint invariant closed subspace of $H_m(\mathbb{B}_n, \cle)$. As in the proof of Theorem \ref{main1}, let us focus on the case where $\ker A_{M_{\Phi}}$ is not a $M_{\bm{z}}$-joint reducing subspace of $H_m(\mathbb{B}_n,\cle)$. Furthermore, if we assume that $\ker A_{M_{\Phi}}$ is a proper subspace, then from Lemma \ref{cnp_lem}, there exists a non-zero element $\eta \in (\ker A_{M_{\Phi}})^{\perp}$ such that $M_{z_i}^* \eta \in \ker A_{M_{\Phi}}$ for all $i \in \{1,\ldots,n\}$. Now, for any $l \in \Nat$, there exists $\zeta_l \in \cle$ and $h_{j,l} \in z_j \clh$ such that
\[
M_{\Phi}^{*l} \eta = \zeta_l \oplus \sum_{t=1}^n h_{t,l},
\]
and therefore, $\lim_{l \raro \infty} \|M_{z_i}^* \sum_{t=1}^n h_{t,l} \| = \lim_{l \raro \infty} \|M_{z_i}^* M_{\Phi}^{*l} \eta \| = \lim_{l \raro \infty} \|M_{\Phi}^{*l} M_{z_i}^*  \eta \| = 0$, for any $i \in \{1,\ldots,n\}$. Thus, 
\[
\lim_{l \raro \infty} \|\sum_{j=0}^{m-1} (-1)^j {m \choose j+1} \sum_{|\alpha| = j+1} \gamma_{\alpha} M_{\bm{z}}^{\alpha} M_{\bm{z}}^{*\alpha} (\sum_{t=1}^n h_{t,l} )\| =0. 
\]
From equation (\ref{identity}), we know that
\[
\sum_{j=0}^{m-1} (-1)^j {m \choose j+1} \sum_{|\alpha| = j+1} \gamma_{\alpha} M_{\bm{z}}^{\alpha} M_{\bm{z}}^{*\alpha} = I_{H_m(\mathbb{B}_n, \cle)} - P_{\cle},
\]
and therefore,  
\begin{align*}
\lim_{l \raro \infty} \| \sum_{t=1}^n h_{t,l} \| &= \lim_{l \raro \infty} \|(I_{H_m(\mathbb{B}, \cle)} - P_{\cle}) (\sum_{t=1}^n h_{t,l} ) \| \\
&= \lim_{l \raro \infty} \|\sum_{j=0}^{m-1} (-1)^j {m \choose j+1} \sum_{|\alpha| = j+1} \gamma_{\alpha} M_{\bm{z}}^{\alpha} M_{\bm{z}}^{*\alpha} (\sum_{t=1}^n h_{t,l} )\| = 0.
\end{align*}
Now,
\begin{align*}
\|A_{M_{\Phi}} \eta \| = \lim_{l \raro \infty}  \|A_{M_{\Phi}} M_{\Phi}^{*l} M_{\Gamma} (1 \otimes \eta)\| 
=  \lim_{l \raro \infty}  \|A_{M_{\Phi}}(\zeta_l \oplus \sum_{t=1}^n h_{t,l})\|
&= \lim_{l \raro \infty}  \|A_{M_{\Phi}}(\sum_{t=1}^n h_{t,l})\| \\
& \leq \lim_{l \raro \infty} \|\sum_{t=1}^n h_{t,l}\| = 0,
\end{align*}
implies that $\eta \in \ker A_{M_{\Phi}}$. But $\eta$ was assumed to be an element in $(\ker A_{M_{\Phi}})^{\perp}$ which is possible only if $\eta = 0$. This is a contradiction to our assumption that $\ker A_{M_{\Phi}}$ is a proper subspace. Since $\{0\} \neq \cle \subset \ker A_{M_{\Phi}}$, the only possibility is that $\ker A_{M_{\Phi}} = H_m(\mathbb{B}_n, \cle)$. In other words, $M_{\Phi}$ is a pure contraction on $H_m(\mathbb{B}_n, \cle)$. This completes the proof.
\end{proof}

We can establish the above result for unitarily-invariant cnp spaces as well. Similar to the above situation, we require the following identity analogous to equation (\ref{identity}). This result was established by Chen in [Proposition 2.1, Lemma 2.2, \cite{Chen}].

\begin{propn}[\textit{Chen}]\label{Chen}
Let $\clh_k \otimes \cle$ be an unitarily invariant complete Nevanlinna-Pick space on $\mathbb{B}_n$. Then there exist real numbers $c_{|\beta|}$ such that $c_{|\beta|} \leq  0$ for $|\beta| \geq 1$ and for which
\[
k^{-1}(M_{\bm{z}}, M_{\bm{z}}^*) = \sum_{|\beta|=0}^{\infty} c_{|\beta |} \frac{|\beta| !}{\beta !} M_{\bm{z}}^{\beta} M_{\bm{z}}^{*\beta} = P_{\cle} \quad (\beta \in \Nat^n).
\]
\end{propn}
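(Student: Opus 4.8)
The plan is to verify the operator identity on the orthogonal basis of monomials and then to control the tail of the resulting series. First I would reduce to the scalar case: on $\clh_k \otimes \cle$ the operators factor as $M_{z_i} \otimes I_{\cle}$ with $M_{z_i}$ acting on the scalar space $\clh_k$, and $P_{\cle} = P_{\C} \otimes I_{\cle}$, so the asserted identity is the tensor product of the corresponding one on $\clh_k$ with $I_{\cle}$; thus it suffices to prove $\sum_{|\beta| \geq 0} c_{|\beta|}\frac{|\beta|!}{\beta!}M_{\bm z}^{\beta}M_{\bm z}^{*\beta} = P_{\C}$ on $\clh_k$, where $P_{\C}$ is the projection onto the constants (which is exactly $\clw(M_{\bm z})$ here). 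The coefficients $c_{|\beta|}$ and their sign come directly from the cnp structure: since $\clh_k$ is unitarily invariant and normalized at $0$, the characterization of unitarily invariant cnp spaces gives $1 - 1/k(\bm z,\bm w) = \sum_{n\geq1}b_n\langle\bm z,\bm w\rangle^n$ with $b_n\geq0$. Setting $c_0 = 1$ and $c_n = -b_n$ for $n\geq1$ and expanding $\langle\bm z,\bm w\rangle^{j} = \sum_{|\beta|=j}\frac{j!}{\beta!}\bm z^{\beta}\bar{\bm w}^{\beta}$ identifies $k^{-1}(\bm z,\bm w)$ with $\sum_{\beta}c_{|\beta|}\frac{|\beta|!}{\beta!}\bm z^{\beta}\bar{\bm w}^{\beta}$, which is precisely the hereditary substitution defining $k^{-1}(M_{\bm z},M_{\bm z}^*)$; this already records $c_{|\beta|}\leq0$ for $|\beta|\geq1$.

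Next I would diagonalize. Each summand $M_{\bm z}^{\beta}M_{\bm z}^{*\beta}$ lowers and then raises the degree, so it sends a monomial $\bm z^{\alpha}$ to a scalar multiple of itself and vanishes unless $\beta\leq\alpha$; hence every partial sum $S_N := \sum_{|\beta|\leq N}c_{|\beta|}\frac{|\beta|!}{\beta!}M_{\bm z}^{\beta}M_{\bm z}^{*\beta}$ is diagonal in the monomial basis. Using the monomial norms $\|\bm z^{\alpha}\|^2 = \alpha!/(a_{|\alpha|}\,|\alpha|!)$, a direct computation of $M_{\bm z}^{*\beta}\bm z^{\alpha}$ gives the eigenvalue of $M_{\bm z}^{\beta}M_{\bm z}^{*\beta}$ on $\bm z^{\alpha}$ as $\frac{\alpha!}{(\alpha-\beta)!}\frac{a_{|\alpha|-|\beta|}}{a_{|\alpha|}}\frac{(|\alpha|-|\beta|)!}{|\alpha|!}$. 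Writing $d=|\alpha|$, summing over all $\beta$ with $|\beta|=j$ and invoking the Vandermonde identity $\sum_{|\beta|=j,\,\beta\leq\alpha}\binom{\alpha}{\beta} = \binom{d}{j}$ collapses the eigenvalue of the full series on $\bm z^{\alpha}$ to $\Lambda_\alpha = \frac{1}{a_{d}}\sum_{j=0}^{d}c_j a_{d-j}$, which depends only on $d$. Since $k(t)\,k^{-1}(t)=1$ as power series in $t=\langle\bm z,\bm w\rangle$, the Cauchy product yields $\sum_{j=0}^{d}c_j a_{d-j} = \delta_{d,0}$, so $\Lambda_\alpha = 1$ when $\alpha=0$ and $\Lambda_\alpha = 0$ otherwise. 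On the dense span of monomials the series therefore acts as $P_{\C}$.

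The step requiring genuine care — and what I expect to be the main obstacle — is promoting this monomial-level identity to an operator identity, that is, justifying convergence of the infinite series in the strong operator topology (agreement on polynomials alone is not enough, since the partial sums must also be controlled on their tails). Here the sign condition on the $c_j$ is decisive. Writing the partial-sum eigenvalue $\Lambda_\alpha^{(N)} = \frac{1}{a_{d}}\sum_{j=0}^{N}c_j a_{d-j}$ for $d = |\alpha| > N$, the relations $c_0 = 1$, $c_j\leq0$ for $j\geq1$, and $\sum_{j=0}^{d}c_j a_{d-j} = 0$ for $d\geq1$ force $0\leq\Lambda_\alpha^{(N)}\leq1$; hence $0\leq S_N\leq I$ for every $N$. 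With this uniform bound in hand, the convergence $S_N\to P_{\C}$ that holds on the dense set of polynomials extends to all of $\clh_k$ by a standard argument using the uniform bound together with square-summability of the expansion coefficients. Tensoring with $I_{\cle}$ then returns the stated identity $k^{-1}(M_{\bm z},M_{\bm z}^*) = P_{\cle}$ on $\clh_k\otimes\cle$. Apart from this convergence point, the remaining work — the eigenvalue formula and the Vandermonde collapse — is routine bookkeeping.
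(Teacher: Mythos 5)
The paper does not actually prove this proposition: it is imported from Chen [Proposition 2.1, Lemma 2.2, \cite{Chen}], so there is no internal argument to compare against. Your proof is correct and self-contained, and it supplies exactly the missing verification. The key ingredients all check out: the sign condition $c_{|\beta|}\leq 0$ for $|\beta|\geq 1$ comes from the M\textsuperscript{C}Cullough--Quiggin characterization $1-1/k=\sum_{j\geq 1}b_j\langle\bm{z},\bm{w}\rangle^j$ with $b_j\geq 0$ (quoted in Section 2 of the paper) via $c_0=1$, $c_j=-b_j$; each $M_{\bm{z}}^{\beta}M_{\bm{z}}^{*\beta}$ is diagonal in the orthogonal monomial basis with $\|\bm{z}^{\alpha}\|^2=\alpha!/(a_{|\alpha|}\,|\alpha|!)$; the Vandermonde identity $\sum_{|\beta|=j}\binom{\alpha}{\beta}=\binom{d}{j}$ with $d=|\alpha|$ collapses the eigenvalue of the full series on $\bm{z}^{\alpha}$ to $\frac{1}{a_d}\sum_{j=0}^{d}c_j a_{d-j}=\delta_{d,0}$ by the Cauchy product $k\cdot k^{-1}=1$; and --- the one genuinely delicate point, which you correctly isolate --- the identity $a_d=\sum_{j=1}^{d}b_j a_{d-j}$ for $d\geq 1$ gives the tail formula $\Lambda^{(N)}_{\alpha}=\frac{1}{a_d}\sum_{j=N+1}^{d}b_j a_{d-j}\in[0,1]$, hence $0\leq S_N\leq I$ uniformly, which upgrades the exact agreement $S_N p = P_{\C}p$ on polynomials $p$ of degree at most $N$ to convergence in the strong operator topology. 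The reduction to the scalar case via $M_{z_i}=M_{z_i}\otimes I_{\cle}$ and $P_{\cle}=P_{\C}\otimes I_{\cle}$ is also fine. The only cosmetic quibble is your appeal to ``square-summability of the expansion coefficients'' at the end: the standard $\varepsilon/3$ argument using the uniform bound $\|S_N\|\leq 1$ and convergence on the dense subspace of polynomials already suffices, so that phrase can be dropped.
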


Note that for any $h \in \clh_k \otimes \cle$ and $n \geq 2$ we have
\begin{align*}
\langle \sum_{1 \leq |\beta| \leq n} c_{|\beta |} \frac{|\beta| !}{\beta !} M_{\bm{z}}^{\beta} M_{\bm{z}}^{*\beta} h, h \rangle - \langle \sum_{1 \leq |\beta| \leq n+1} c_{|\beta |} \frac{|\beta| !}{\beta !} M_{\bm{z}}^{\beta} M_{\bm{z}}^{*\beta} h, h \rangle &= - \langle \sum_{|\beta| = n+1} c_{|\beta |} \frac{|\beta| !}{\beta !} M_{\bm{z}}^{\beta} M_{\bm{z}}^{*\beta} h, h \rangle \\
&= \sum_{|\beta| = n+1} - c_{|\beta |} \frac{|\beta| !}{\beta !} \| M_{\bm{z}}^{*\beta} h \|^2 \\
& \geq 0.
\end{align*}
Thus, $\{ \langle \underset{1 \leq |\beta| \leq n}\sum c_{|\beta |} \frac{|\beta| !}{\beta !} M_{\bm{z}}^{\beta} M_{\bm{z}}^{*\beta} h, h \rangle \}_n$ is a monotonic decreasing sequence of real numbers.

\begin{thm}\label{cnp_char}
Let $\clh_k \otimes \cle$ be an unitarily invariant complete Nevanlinna-Pick space on $\mathbb{B}_n$. Then for any $\Phi \in \mbox{Mult}_1(\clh_k \otimes \cle)$, the following are equivalent
\begin{enumerate}
\item[(i)]  $M_{\Phi}$ is a pure contraction on $\clh_k \otimes \cle$,
\item[(ii)] $\Phi(0)$ is a pure contraction on $\cle$.
\end{enumerate}
\end{thm}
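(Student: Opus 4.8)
The plan is to follow the same skeleton as the proof of Theorem~\ref{Berg_char}, with Chen's operator identity (Proposition~\ref{Chen}) playing the role that the finite Euler-type identity~\eqref{identity} played there; the one genuinely new point is that this identity is now an \emph{infinite} series, so an interchange of a limit with an infinite sum must be justified. The implication $(i)\implies(ii)$ is immediate and identical to the ball case: since $k$ is normalized at $0$, for $\eta\in\cle$ one has $M_{\Phi}^{*l}(k(\cdot,0)\eta)=k(\cdot,0)\Phi(0)^{*l}\eta$, so $\|\Phi(0)^{*l}\eta\|=\|M_{\Phi}^{*l}(k(\cdot,0)\eta)\|\raro 0$, forcing $\Phi(0)$ to be pure.

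For $(ii)\implies(i)$ I would first form the positive contraction $A_{M_{\Phi}}$ with $A_{M_{\Phi}}^2=\text{SOT-}\lim_m M_{\Phi}^mM_{\Phi}^{*m}$, and use the identity $\|A_{M_{\Phi}}M_{\Phi}^{*l}h\|=\|A_{M_{\Phi}}h\|$. As in Theorem~\ref{Berg_char}, $\ker A_{M_{\Phi}}$ is $(M_{z_1}^*,\dots,M_{z_n}^*)$-joint invariant (each $M_{z_i}$ commutes with $M_{\Phi}$), and the computation in $(i)\implies(ii)$ shows $\cle=\clw(M_{\bm z})\subseteq \ker A_{M_{\Phi}}$; in particular $\ker A_{M_{\Phi}}\neq\{0\}$. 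If $\ker A_{M_{\Phi}}$ is $M_{\bm z}$-joint reducing, then it is $M_{\bm z}$-invariant and contains $\cle$, whence by regularity $\clh_k\otimes\cle=\bigvee_{\bm m}M_{\bm z}^{\bm m}\cle\subseteq \ker A_{M_{\Phi}}$ and $M_{\Phi}$ is pure. Otherwise I would assume, for contradiction, that $\ker A_{M_{\Phi}}$ is a \emph{proper} subspace; then $\clm:=(\ker A_{M_{\Phi}})^{\perp}$ is a proper $M_{\bm z}$-invariant subspace with $\clw(M_{\bm z})=\cle\subseteq\clm^{\perp}$, and Lemma~\ref{cnp_lem} supplies a non-zero $\eta\in\clm=(\ker A_{M_{\Phi}})^{\perp}$ with $M_{z_i}^*\eta\in\ker A_{M_{\Phi}}$ for every $i$.

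Set $g_l:=(I-P_{\cle})M_{\Phi}^{*l}\eta$, the component of $M_{\Phi}^{*l}\eta$ orthogonal to $\cle$. Since $M_{\Phi}$ is a contraction, $\|g_l\|\leq\|\eta\|$, and since $\cle\subseteq\ker A_{M_{\Phi}}$ kills the $\cle$-component, $\|A_{M_{\Phi}}\eta\|=\|A_{M_{\Phi}}M_{\Phi}^{*l}\eta\|=\|A_{M_{\Phi}}g_l\|\leq\|g_l\|$ for every $l$; thus it suffices to prove $\lim_{l}\|g_l\|=0$. Using that $M_{\bm z}^{*\beta}P_{\cle}=0$ for $|\beta|\geq 1$ (because $\cle=\bigcap_i\ker M_{z_i}^*$) and that $M_{\bm z}^{\beta}$ commutes with $M_{\Phi}$, I get $M_{\bm z}^{*\beta}g_l=M_{\Phi}^{*l}M_{\bm z}^{*\beta}\eta$ for $|\beta|\geq1$; moreover $M_{\bm z}^{*\beta}\eta\in\ker A_{M_{\Phi}}$ by the joint invariance of $\ker A_{M_{\Phi}}$ and the choice of $\eta$, so for each fixed $\beta$ with $|\beta|\ge1$,
\[
\|M_{\bm z}^{*\beta}g_l\|=\|M_{\Phi}^{*l}M_{\bm z}^{*\beta}\eta\|\longrightarrow \|A_{M_{\Phi}}M_{\bm z}^{*\beta}\eta\|=0 \qquad (l\raro\infty).
\]

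Now comes the crux, and the step I expect to be the main obstacle. Since $P_{\cle}g_l=0$, Proposition~\ref{Chen} gives $\|g_l\|^2=\langle(I-P_{\cle})g_l,g_l\rangle=\sum_{|\beta|\ge1}(-c_{|\beta|})\tfrac{|\beta|!}{\beta!}\|M_{\bm z}^{*\beta}g_l\|^2$, a series of non-negative terms (recall $c_{|\beta|}\le0$) each tending to $0$ as $l\raro\infty$. One cannot pass $l\raro\infty$ through this \emph{infinite} sum for free — precisely the difficulty that did not arise in Theorem~\ref{Berg_char}, where the identity~\eqref{identity} terminated at $|\alpha|=m$. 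I would resolve it by dominated convergence for series: contractivity of $M_{\Phi}$ yields $\|M_{\bm z}^{*\beta}g_l\|=\|M_{\Phi}^{*l}M_{\bm z}^{*\beta}\eta\|\le\|M_{\bm z}^{*\beta}\eta\|$ uniformly in $l$, so each term is dominated by $a_{\beta}:=(-c_{|\beta|})\tfrac{|\beta|!}{\beta!}\|M_{\bm z}^{*\beta}\eta\|^2$, and the monotone convergence recorded just before the theorem shows exactly that $\sum_{|\beta|\ge1}a_{\beta}=\|(I-P_{\cle})\eta\|^2\le\|\eta\|^2<\infty$. Tannery's theorem then permits interchanging limit and sum, giving $\lim_l\|g_l\|^2=\sum_{|\beta|\ge1}(-c_{|\beta|})\tfrac{|\beta|!}{\beta!}\lim_l\|M_{\bm z}^{*\beta}g_l\|^2=0$. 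Hence $\|A_{M_{\Phi}}\eta\|=0$, i.e. $\eta\in\ker A_{M_{\Phi}}\cap(\ker A_{M_{\Phi}})^{\perp}=\{0\}$, contradicting $\eta\neq0$. Therefore $\ker A_{M_{\Phi}}$ cannot be proper, and being non-zero it must equal all of $\clh_k\otimes\cle$; that is, $M_{\Phi}$ is a pure contraction.
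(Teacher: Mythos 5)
Your proof is correct, and it follows the same skeleton as the paper's: the easy direction via the reproducing kernel at $0$, then $\cle\subseteq\ker A_{M_{\Phi}}$, Lemma \ref{cnp_lem} producing $\eta\in(\ker A_{M_{\Phi}})^{\perp}$ with $M_{z_i}^{*}\eta\in\ker A_{M_{\Phi}}$, and Chen's identity to show the part of $M_{\Phi}^{*l}\eta$ orthogonal to $\cle$ tends to $0$. The one place where you genuinely diverge is the crux you yourself flagged: the interchange of $l\raro\infty$ with the infinite series $\sum_{|\beta|\geq 1}(-c_{|\beta|})\tfrac{|\beta|!}{\beta!}\|M_{\bm z}^{*\beta}g_l\|^2$. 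The paper instead tries to exploit the monotonicity of the partial sums recorded before the theorem and bounds the full sum by its $|\beta|=1$ piece, which it has already shown tends to $0$. With the signs written consistently (note $c_0=1$, so $\sum_{|\beta|\geq1}c_{|\beta|}\tfrac{|\beta|!}{\beta!}M_{\bm z}^{\beta}M_{\bm z}^{*\beta}=P_{\cle}-I$, not $I-P_{\cle}$ as the paper states), the partial sums of the \emph{non-negative} series increase to $\|g_l\|^2$, so the $|\beta|=1$ term is a lower bound for $\|g_l\|^2$ rather than an upper bound, and the paper's inequality points the wrong way. Your dominated-convergence/Tannery argument, with the summable majorant $a_{\beta}=(-c_{|\beta|})\tfrac{|\beta|!}{\beta!}\|M_{\bm z}^{*\beta}\eta\|^2$ whose sum is $\|(I-P_{\cle})\eta\|^2\leq\|\eta\|^2$, is exactly what is needed here and actually repairs this step; the price is only the (entirely standard) appeal to Tannery's theorem. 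Your explicit treatment of the reducing case via $\bigvee_{\bm m}M_{\bm z}^{\bm m}\cle=\clh_k\otimes\cle$ also fills in a case the paper passes over in one clause.
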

\begin{proof}
As before, $(i)$ implies $(ii)$ is easy to observe. We will prove $(ii)$ implies $(i)$.

Note that $\ker A_{M_{\Phi}}$ is a $(M_{z_1}^*,\ldots,M_{z_n}^*)$-joint invariant closed subspace of $\clh_k \otimes \cle$. Similar to the proof of Theorem \ref{Berg_char}, let us consider the case where $\ker A_{M_{\Phi}}$ is a proper subspace that is not $M_{\bm{z}}$-joint reducing. Then by Lemma \ref{cnp_lem} there exists $\eta \in (\ker A_{M_{\Phi}})^{\perp}$ such that $M_{z_i}^* \eta \in \ker A_{M_{\Phi}}$ for all $i \in \{1,\ldots,n\}$. Now for any $l \in \Nat$, there exists $\zeta_l \in \cle$ and $h_{j,l} \in z_j \clh$ such that
\[
M_{\Phi}^{*l} \eta = \zeta_l \oplus \sum_{j=1}^n h_{j,l},
\]
Thus, $\lim_{l \raro \infty} \|M_{z_i}^* \sum_{j=1}^n h_{j,l} \| = 0$ for any $i \in \{1,\ldots,n\}$ which further implies that
\[
\lim_{l \raro \infty} \|\sum_{|\beta| = 1} c_{|\beta |} \frac{|\beta| !}{\beta !} M_{\bm{z}}^{\beta} M_{\bm{z}}^{*\beta}(\sum_{j=1}^n h_{j,l} )\| =0. 
\]
From Proposition \ref{Chen} it follows that
\[
\sum_{|\beta|=1}^{\infty} c_{|\beta |} \frac{|\beta| !}{\beta !} M_{\bm{z}}^{\beta} M_{\bm{z}}^{*\beta} = I_{\clh_k \otimes \cle} - P_{\cle},
\]
and thus,
\begin{align*}
\lim_{l \raro \infty} \| \sum_{j=1}^n h_{j,l} \| = \lim_{l \raro \infty} \langle \sum_{j=1}^n h_{j,l},  \sum_{j=1}^n h_{j,l}  \rangle &= \lim_{l \raro \infty} \langle (I_{\clh_k \otimes \cle} - P_{\cle}) (\sum_{j=1}^n h_{j,l} ), \sum_{j=1}^n h_{j,l} \rangle \\
&=\lim_{l \raro \infty} \langle \sum_{|\beta|=1}^{\infty} c_{|\beta |} \frac{|\beta| !}{\beta !} M_{\bm{z}}^{\beta} M_{\bm{z}}^{*\beta}(\sum_{j=1}^n h_{j,l} ), \sum_{j=1}^n h_{j,l} \rangle \\
&= \lim_{l \raro \infty} \lim_{m \raro \infty} \langle \sum_{1 \leq |\beta| \leq m}c_{|\beta |} \frac{|\beta| !}{\beta !} M_{\bm{z}}^{\beta} M_{\bm{z}}^{*\beta}(\sum_{j=1}^n h_{j,l} ), \sum_{j=1}^n h_{j,l}\rangle \\
& \leq \lim_{l \raro \infty}  \langle \sum_{|\beta| = 1} c_{|\beta |} \frac{|\beta| !}{\beta !} M_{\bm{z}}^{\beta} M_{\bm{z}}^{*\beta}(\sum_{j=1}^n h_{j,l} ), \sum_{j=1}^n h_{j,l}\rangle\\
&= 0 \quad (\text{since } \|\sum_{j=1}^n h_{j,l}\| \leq \eta \text{, for all } l \in \Nat).
\end{align*}
Now,
\begin{align*}
\|A_{M_{\Phi}} \eta \| = \lim_{l \raro \infty}  \|A_{M_{\Phi}} M_{\Phi}^{*l} \eta\| 
=  \lim_{l \raro \infty}  \|A_{M_{\Phi}}(\zeta_l \oplus \sum_{j=1}^n h_{j,l})\|
&= \lim_{l \raro \infty}  \|A_{M_{\Phi}}(\sum_{j=1}^n h_{j,l})\| \\
& \leq \lim_{l \raro \infty} \|\sum_{j=1}^n h_{j,l}\| = 0,
\end{align*}
This implies that $\eta \in \ker A_{M_{\Phi}}$ and therefore, the only possibility is $\eta = 0$. This is a contradiction to our assumption that $\ker A_{M_{\Phi}}$ is proper which further implies that $\ker A_{M_{\Phi}} = \clh_k \otimes \cle$, in other words, $M_{\Phi}$ is a pure contraction on $\clh_k \otimes \cle$.
\end{proof}

Using Theorem \ref{Berg_char}, we can establish a result that is analogous to Theorem \ref{main1}, with respect to certain special tuples of commuting operators. These tuples are indeed special because in a recent paper \cite{EL}, Eschmeier and Langend\"orfer  established a Wold-von Neumann type decomposition theorem for these tuples. Let us give a  brief overview of these tuples before moving into our results (see \cite{EL} for more details).

Let $X$ be the row operator (denoted by $X \in \clb(\clh)^n$) corresponding to a $n$-tuple of commuting operators $(X_1,\ldots,X_n)$ on $\clh$. Let $\sigma_X: \clb(\clh) \raro \clb(\clh)$ be a positive linear map defined by
\[
\sigma_X(Y): = \sum_{1 \leq i \leq n} X_i Y X_i^* \quad (Y \in \clb(\clh)).
\]
$X$ is said to be \textit{pure} if $\sigma_X^n(I) \raro 0$ in the strong operator topology as $n \raro \infty$.
\begin{defn}
A row operator $X \in \clb(\clh)^n$ is said to be \textit{regular} at $z=0$ if there is a real number $\epsilon > 0$ such that $\|z\| < \epsilon$, the subspace $(X - Z) \clh^n \subset \clh$ is closed and $\clh$ decomposes into the algebraic direct sum 
\[
\clh = (X - Z) \clh^n \oplus \clw(X),
\]
where $Z \in \clb(\clh)^n$ is the row operator defined by $Z (h_i)_{i=1}^n = \sum_{i=1}^n z_i h_i$.
\end{defn}
The following result is a particular case of [Theorem 3.7, Corollary 3.6, \cite{EL}].
\begin{thm}
Let $X \in \clb(\clh)^n$ be a pure commuting $n$-tuple of operators which is regular at $z=0$. Then the following conditions are equivalent:
\begin{enumerate}
\item[(i)] For some positive integer $m \geq 1$, $X$ satisfies the identity $(X^*X)^{-1} = (\oplus \Delta_X)|_{X^* \clh}$, where $\Delta_X = \oplus \sum_{j=0}^{m-1} (-1)^j {m \choose j+1} \sigma_X^j(I_H)$,
\item[(ii)] $X$ is unitarily equivalent to $M_{\bm{z}} \in \clb(H_m(\mathbb{B}_n, \clw(X)))^n$.
\end{enumerate}
\end{thm}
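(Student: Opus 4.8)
The plan is to treat this as a model theorem and prove the two implications separately: I want to build an explicit unitary $V \colon \clh \raro H_m(\mathbb{B}_n, \clw(X))$ that intertwines the tuple $X$ with $M_{\bm{z}}$, and then to recognize that identity (i) is exactly the condition forcing $V$ to be isometric, while purity and regularity at $0$ are what make $V$ well defined and surjective. The direction $(ii) \implies (i)$ should be purely computational, and $(i)\implies(ii)$ carries the real content.

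For $(ii) \implies (i)$ I would work directly on the model space. Expanding $\sigma_X^j(I) = \sum_{|\alpha| = j} \tfrac{j!}{\alpha!}\, X^{\alpha} X^{*\alpha}$ (valid since the entries of a commuting row operator generate commuting adjoints), I would assemble $\Delta_{M_{\bm{z}}}$ and compare it against equation (\ref{identity}), the operator identity already recorded for $H_m(\mathbb{B}_n, \clw(X))$. Since (\ref{identity}) encodes precisely the kernel $(1-\langle \bm{z},\bm{w}\rangle)^{-m}$, a bookkeeping of binomial coefficients should show that $\oplus\,\Delta_{M_{\bm{z}}}$, restricted to the range $M_{\bm{z}}^* H_m(\mathbb{B}_n, \clw(X))$, inverts the Gram operator $M_{\bm{z}}^* M_{\bm{z}} = [M_{z_i}^* M_{z_j}]$ there. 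This is a finite manipulation with no analytic subtlety.

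For $(i) \implies (ii)$ I would first convert identity (i) into a \emph{reconstruction formula}. Iterating $\sigma_X$ against the defect operator that (i) forces (morally $P_{\clw(X)}$), telescoping, and invoking purity $\sigma_X^k(I) \raro 0$ to discard the tail, I expect to obtain a strongly convergent resolution
\[
I_{\clh} = \sum_{\alpha \in \Nat^n} \binom{m + |\alpha| - 1}{|\alpha|} \frac{|\alpha|!}{\alpha!}\, X^{\alpha} P_{\clw(X)} X^{*\alpha},
\]
whose coefficients are exactly the reciprocals of the monomial norms in $H_m(\mathbb{B}_n, \clw(X))$. I would then set $V h = \sum_{\alpha} c_{\alpha}\, (P_{\clw(X)} X^{*\alpha} h)\, \bm{z}^{\alpha}$ with the matching constants $c_\alpha$, so that the reconstruction formula becomes exactly the identity $\|Vh\|^2 = \|h\|^2$. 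The intertwining $V X_i = M_{z_i} V$ is then immediate from a shift of indices, and $V$ restricts to the identity on $\clw(X)$, so the wandering subspaces are identified correctly.

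The main obstacle lies in two coupled points of the last paragraph. First, the well-definedness and surjectivity of $V$: this is where regularity at $z=0$ does the work, since the decomposition $\clh = (X - Z)\clh^n \oplus \clw(X)$ for small $\|z\|$ supplies the analytic power-series structure and, together with purity, the wandering subspace property $\clh = \bigvee_{\alpha} X^{\alpha}\clw(X)$ needed for $V$ to be onto. Second, and more delicate, is pinning down the \emph{exact} combinatorial weights: identity (i) must be shown to force precisely the coefficients $\binom{m+|\alpha|-1}{|\alpha|}\tfrac{|\alpha|!}{\alpha!}$ of $(1-\langle \bm{z},\bm{w}\rangle)^{-m}$ and no neighbouring unitarily invariant weight. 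Verifying that the telescoping of $\Delta_X$ reproduces exactly these weights is the real content, and I would carry it out by an induction on $m$ built on the relation between $\Delta_X$ and the finite differences $\sigma_X^j(I_{\clh})$ appearing in its definition.
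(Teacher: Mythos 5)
The paper does not prove this statement: it is quoted verbatim as a particular case of [Theorem~3.7, Corollary~3.6] of Eschmeier--Langend\"orfer \cite{EL}, and the only additional information recorded afterwards is the explicit form of the unitary, $Uh(\bm{z})=\sum_{\alpha\in\Nat^n}\gamma_\alpha P_{\clw(X)}L^\alpha h\,\bm{z}^\alpha$ with $L=(X^*X)^{-1}X^*$. So there is no in-paper proof to compare against; what follows assesses your outline on its own terms. Your overall strategy (verify (ii)$\Rightarrow$(i) on the model via equation (\ref{identity}); for (i)$\Rightarrow$(ii) extract a reconstruction formula and read it as the isometry of an explicit intertwiner) is the right one and is essentially the route taken in \cite{EL}, except that their map is built from the left inverse $L$ with coefficients $\gamma_\alpha=|\alpha|!/\alpha!$ rather than from $X^{*\alpha}$ with the weights $\binom{m+|\alpha|-1}{|\alpha|}\frac{|\alpha|!}{\alpha!}$; the two maps coincide once everything is proved, but the $L$-based formula is the one for which the intertwining is mechanical.

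Two points in your sketch are genuinely underdeveloped. First, the passage from identity (i) to the reconstruction formula is the crux and should not be waved at: the concrete first step is that (i) gives $X(\oplus\Delta_X)X^*=X(X^*X)^{-1}X^*=P_{\overline{X\clh^n}}$, and since $\sigma_X(\Delta_X)=-\sum_{j=1}^m(-1)^j\binom{m}{j}\sigma_X^j(I)$ this is exactly the defect identity $\sum_{j=0}^m(-1)^j\binom{m}{j}\sigma_X^j(I)=P_{\clw(X)}$. Only then does the telescoping start, and the tail you discard ``by purity'' consists of finitely many terms $c_N^{(j)}\sigma_X^{j}(I)$ whose scalar coefficients grow polynomially in the truncation level $N$; since $X$ is not assumed to be a row contraction, $\sigma_X^j(I)\raro 0$ strongly does not by itself kill such a tail, and this convergence issue must be addressed (this is where \cite{EL} do real work). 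Second, the intertwining $VX_i=M_{z_i}V$ is \emph{not} ``immediate from a shift of indices'' for your formula, because $X^{*\alpha}X_i$ does not simplify for a merely commuting (not doubly commuting) tuple; the clean argument is to compute $V^*(\bm{z}^\alpha\eta)=X^\alpha\eta$ from the isometry relation and verify $V^*M_{z_i}=X_iV^*$ there, with surjectivity of $V$ coming from the wandering subspace property --- which, note, already follows from the reconstruction formula itself. Regularity at $z=0$ is needed earlier, to make $X\clh^n$ closed so that $(X^*X)^{-1}$ in (i) is meaningful and $\clh=X\clh^n\oplus\clw(X)$, not for surjectivity as you suggest.
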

In particular, condition $(i)$ implies that $X$ satisfies the wandering subspace property on $\clh$ and there exists a unitary $U:\clh \raro H_m(\mathbb{B}_n, \clw(X))$ defined by
\[
Uh(\bm{z}) = \sum_{\alpha \in \Nat^n} \gamma_{\alpha} P_{\clw(X)} L^{\alpha}h \hspace{1mm} {\bm{z}}^{\alpha},
\]
where $L:= (X^*X)^{-1} X^*$. Moreover, the unitary $U$ follows the following intertwining property
\begin{equation}\label{intertwine}
UX_i = (M_{z_i} \otimes I_{\cle}) U \quad (i \in \{1,\ldots,n\}),
\end{equation}
and $U(\clw(X)) = \clw(X)$ (see [Theorem 3.5, \cite{EL}] for details). We are now ready to state our main result based on these tuples.
\begin{thm}
Let $X \in \clb(\clh)^n$ be a pure commuting $n$-tuple of operators which is regular at $z=0$ and for some positive integer $m \geq 1$ satisfies $(X^*X)^{-1} = (\oplus \Delta_X)|_{X^* \clh}$. Let $T$ be a contraction on $\clh$ such that $T X_i = X_i T$, for all $i \in \{1,\ldots,n\}$. Then the following are equivalent:
\item[(i)] $T$ is a pure contraction on $\clh$,
\item[(ii)]$P_{\clw(X)} T|_{\clw(X)}$ is a pure contraction on $\clw(X)$.
\end{thm}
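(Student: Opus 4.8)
The plan is to transport the entire problem along the Eschmeier--Langend\"orfer model and then invoke Theorem \ref{Berg_char}. Write $\cle := \clw(X)$. Under the stated hypotheses that theorem furnishes a unitary $U : \clh \raro H_m(\mathbb{B}_n, \cle)$ realizing the intertwining relation \eqref{intertwine}, namely $U X_i = (M_{z_i} \otimes I_{\cle}) U$ for every $i \in \{1,\ldots,n\}$, together with the identification $U(\clw(X)) = \clw(X)$, where on the model side $\clw(X)$ is realized as the coefficient space $\cle = \bigcap_{i=1}^n \ker M_{z_i}^* = \clw(M_{\bm z})$, that is, the constant functions. The idea is that, once $X$ is unitarily equivalent to $M_{\bm z}$ on $H_m(\mathbb{B}_n,\cle)$, the contraction $T$ becomes a multiplication operator, and the equivalence we want is exactly the content of Theorem \ref{Berg_char}.

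First I would show that $UTU^*$ is a multiplication operator. Since $T X_i = X_i T$ for all $i$, conjugating by $U$ yields $(UTU^*)(M_{z_i} \otimes I_{\cle}) = (M_{z_i} \otimes I_{\cle})(UTU^*)$, so $UTU^*$ lies in the commutant of the coordinate tuple $M_{\bm z}$ on $H_m(\mathbb{B}_n, \cle)$. Because $H_m(\mathbb{B}_n,\cle)$ is a regular reproducing kernel Hilbert space, this commutant coincides with the algebra of analytic multiplication operators, so there is $\Phi \in \mult(H_m(\mathbb{B}_n,\cle))$ with $UTU^* = M_{\Phi}$; as $T$ is a contraction and $U$ is unitary, $\|M_{\Phi}\| = \|T\| \leq 1$, whence $\Phi \in \mult_1(H_m(\mathbb{B}_n,\cle))$.

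Since unitary conjugation preserves purity, $T$ is a pure contraction on $\clh$ if and only if $M_{\Phi} = UTU^*$ is a pure contraction on $H_m(\mathbb{B}_n,\cle)$, and Theorem \ref{Berg_char} then equates the latter with $\Phi(0)$ being a pure contraction on $\cle$. It remains to match $\Phi(0)$ with the compression in $(ii)$. On the coefficient space $\cle$ of constant functions one has $P_{\cle} M_{\Phi}|_{\cle} = \Phi(0)$, because $M_{\Phi}$ sends a constant $\eta$ to $\Phi \eta$, whose constant term is $\Phi(0)\eta$. Writing $P_{\cle} = U P_{\clw(X)} U^*$ (valid since $U\clw(X) = \cle$) and using $M_{\Phi} U = U T$, a direct computation gives, for $\eta \in \clw(X)$,
\[
P_{\cle} M_{\Phi} (U\eta) = U P_{\clw(X)} U^* M_{\Phi} U \eta = U P_{\clw(X)} T \eta,
\]
so that $U|_{\clw(X)}$ implements a unitary equivalence between $P_{\clw(X)} T|_{\clw(X)}$ and $\Phi(0)$. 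Hence $\Phi(0)$ is a pure contraction if and only if $P_{\clw(X)} T|_{\clw(X)}$ is a pure contraction, and chaining the three equivalences proves the theorem.

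The only step requiring genuine justification rather than bookkeeping is the claim that every bounded operator commuting with the coordinate tuple $M_{\bm z}$ on $H_m(\mathbb{B}_n,\cle)$ is a multiplier $M_{\Phi}$; this is where the regularity of the space---the monomials forming an orthogonal basis and $\bigcap_{i=1}^n \ker M_{z_i}^* = \cle$---enters, via the action of the commuting operator on reproducing kernels. Everything else is transport of structure along $U$ and an appeal to Theorem \ref{Berg_char}; in particular, the case analysis on $\ker A_{M_{\Phi}}$ has already been absorbed into that theorem, so it need not be repeated here.
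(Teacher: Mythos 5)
Your proposal is correct and follows essentially the same route as the paper: conjugate by the Eschmeier--Langend\"orfer unitary $U$, use the commutation with $M_{\bm z}$ to realize $UTU^*$ as a contractive multiplier $M_{\Phi}$ (the paper cites [Proposition 4.2, \cite{jaydeb3}] for the commutant-equals-multipliers step you flagged), identify $P_{\clw(X)}T|_{\clw(X)}$ with $\Phi(0)$, and invoke Theorem \ref{Berg_char}. The only cosmetic difference is that you chain both implications through the explicit identification of $\Phi(0)$ with the compression, whereas the paper handles $(i)\Rightarrow(ii)$ separately and phrases $(ii)\Rightarrow(i)$ in terms of $\clw(X)\subseteq\ker A_{M_{\Theta}}$; the content is the same.
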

\begin{proof}
Note that $(i)$ implies $(ii)$ follows in a manner that is similar to the above results. 

$(ii) \implies (i):$ From equation (\ref{intertwine}), it follows that
\[
UTU^* (M_{z_i} \otimes I_{\cle}) = (M_{z_i} \otimes I_{\cle}) UTU^* \quad (i \in \{1,\ldots,n\}).
\]
This implies that there exists a contractive multiplier $\Theta \in \mbox{Mult}_1(H_m(\mathbb{B}_n, \clw(X)))$ such that $T = U^* M_{\Theta} U$ (see Proposition 4.2, \cite{jaydeb3}). Now assumption $(b)$ implies that $\clw(X) \subseteq \ker A_{T}$, and therefore, for any $\eta \in \clw(X)$ we have
\[
\lim_{n \raro \infty} \|M_{\Theta}^{*n} U \eta \| = \lim_{n \raro \infty} \|U T^{*n}  \eta\| = \lim_{n \raro \infty} \|T^{*n}  \eta \| =0.
\]
Thus, $\clw(X) = U \clw(X) \subseteq \ker A_{M_{\Theta}}$ and therefore, Theorem \ref{Berg_char} implies that $M_{\Theta}$ is a pure contraction on $H_m(\mathbb{B}_n, \clw(X))$. And by unitary equivalence, we get that $T$ is also a pure contraction on $\clh$. This completes the proof.
\end{proof}

\section{Applications}\label{Sec5}
Recently, the author proved that pair of commuting pure contractions with finite dimensional defect spaces admit pure isometric dilation (see \cite{SS}). The aim of this section is to apply Theorem \ref{main3} to extend the above result for certain important tuples of commuting pure contractions appearing in recent papers \cite{BDHS} and \cite{BDS}.

Let 
\[
\clx^n(\clh) := \{(X_1,\ldots,X_n) \in \clb(\clh): \|X_ih\| \leq 1, X_iX_j = X_jX_i, 1 \leq i,j \leq n\}
\]
A tuple $X \in \clx^n(\clh)$ is said to be \textit{pure} if $X_i$ is a pure contraction for all $i \in \{1,\ldots,n\}$. 

With respect to the Szeg\"o kernel $\mathbb{S}_n(\bm{z}, \bm{w})$ of the Hardy space $H^2(\D^n)$, let
\[
\mathbb{S}_n(\clh): = \{X \in \clx^n(\clh): \mathbb{S}_n^{-1}(X,X^*) \geq 0\}.
\]

For the sequel, we need a special type of operator-valued multipliers called the \textit{Schur-Agler} class on $\D^n$ (see \cite{AM}). 

\begin{defn}
For a Hilbert space $\cle$, the Schur-Agler class on $\D^n$ (denoted by $\cls \cla_n(\cle,\cle)$) is defined to be the collection of all $\clb(\cle)$-valued bounded analytic function on $\D^n$ for which there exists Hilbert spaces $\{\clh_i\}_{i=1}^n$ and a unitary 
\begin{equation}
U= \begin{bmatrix}\label{transfer_uni}
A & B \\
C & D
\end{bmatrix}: \cle \oplus (\underset{i=1}{\overset{n}\oplus} \clh_i) \raro \cle \oplus (\underset{i=1}{\overset{n}\oplus} \clh_i),
\end{equation}
such that
\[
\Phi(\bm{z}) := A + B E(\bm{z})(I - DE(\bm{z}))^{-1}C \quad (\bm{z} \in \D^n),
\]
where $E(\bm{z}) := \oplus_{i=1}^n z_i I_{\clh_i}$.
\end{defn}
It follows from a well-known computation that any element in $\cls \cla_n(\cle,\cle)$ is contractive.

Let us begin by observing a result that follows from [Theorem 4.2 and Proposition 4.1, \cite{BDHS}]. For the sake of completeness we have included the proof of the result.

\begin{lemma}[\cite{BDHS},\cite{BDS}]\label{cnu_symbol}
Let $T$ be a contraction on a Hilbert space $\clh$ for which there exists a Hilbert space $\cle$, $\Psi \in \cls \cla_n(\cle,\cle)$ and an isometry $\Pi:\clh \raro H_{\cle}^{2}(\D^n)$ (where, $n \geq 1$), such that $\Pi T^* = M_{\Psi}^* \Pi$ and $H_{\cle}^{2}(\D^n) = \bigvee_{\bm{k} \in \Nat^n} M_{\bm{z}}^{\bm{k}} \Pi \clh$. If $T$ is a pure contraction on $\clh$, then $\Psi(0)$ is a c.n.u. contraction on $\clk$.
\end{lemma}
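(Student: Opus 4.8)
The plan is to argue by contradiction, showing directly that $\Psi(0)$ has no unitary part rather than going through purity of $M_\Psi$ (which need not hold here). Throughout write $A := A_{M_\Psi}$ for the positive operator obtained as the strong limit of $\{M_\Psi^m M_\Psi^{*m}\}_m$, so that $\|A f\|^2 = \lim_{m \raro \infty}\|M_\Psi^{*m} f\|^2$ for every $f \in H_{\cle}^2(\D^n)$; this limit exists since $\Psi \in \cls\cla_n(\cle,\cle)$ is contractive. The first step records that purity of $T$ places $\Pi\clh$ inside $\ker A$: iterating $\Pi T^* = M_\Psi^*\Pi$ gives $M_\Psi^{*m}\Pi = \Pi T^{*m}$, whence $\|A\,\Pi h\|^2 = \lim_m\|M_\Psi^{*m}\Pi h\|^2 = \lim_m\|T^{*m}h\|^2 = 0$ because $\Pi$ is an isometry and $T$ is pure. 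Thus $\Pi\clh \subseteq \ker A$.

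Now suppose, for contradiction, that $\Psi(0)$ is not c.n.u. Then there is a nonzero reducing subspace $\cle_0 \subseteq \cle$ for $\Psi(0)$ such that $U_0 := \Psi(0)|_{\cle_0}$ is unitary. The crucial observation is that $\cle_0$ is then a reducing subspace for $M_\Psi$ on which $M_\Psi$ acts unitarily. Indeed, for a constant vector $\eta \in \cle_0$ the Hardy-space orthonormality of the monomials gives $\|M_\Psi \eta\|^2 = \sum_{\bm{k}}\|\Psi_{\bm{k}}\eta\|^2$, where $\Psi(\bm{z}) = \sum_{\bm{k}}\Psi_{\bm{k}}\bm{z}^{\bm{k}}$; since $M_\Psi$ is a contraction and $\|\Psi_{0}\eta\| = \|U_0\eta\| = \|\eta\|$, every higher coefficient must satisfy $\Psi_{\bm{k}}\eta = 0$ for $\bm{k}\neq 0$, so $M_\Psi \eta = \Psi(0)\eta = U_0\eta \in \cle_0$. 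As $M_\Psi^*\eta = \Psi(0)^*\eta = U_0^*\eta \in \cle_0$ as well (the reproducing property at $0$ gives $M_\Psi^*\eta = \Psi(0)^*\eta$ for constants), $\cle_0$ reduces $M_\Psi$ and $M_\Psi|_{\cle_0} = U_0$ is unitary. Consequently $A|_{\cle_0} = I_{\cle_0}$, that is $A\eta = \eta$ for all $\eta \in \cle_0$.

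To finish I would show $\cle_0 \perp M_{\bm{z}}^{\bm{k}}\Pi\clh$ for every $\bm{k}\in\Nat^n$ and invoke minimality. For $\bm{k}=0$ and $\eta \in \cle_0$, $h \in \clh$, self-adjointness of $A$ together with $A\eta=\eta$ and $A\,\Pi h = 0$ gives $\langle \eta, \Pi h\rangle = \langle A\eta, \Pi h\rangle = \langle \eta, A\,\Pi h\rangle = 0$. For $\bm{k}\neq 0$ the backward shifts annihilate constants, $M_{\bm{z}}^{*\bm{k}}\eta = 0$, so $\langle \eta, M_{\bm{z}}^{\bm{k}}\Pi h\rangle = \langle M_{\bm{z}}^{*\bm{k}}\eta, \Pi h\rangle = 0$. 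Hence $\eta$ is orthogonal to $\bigvee_{\bm{k}\in\Nat^n} M_{\bm{z}}^{\bm{k}}\Pi\clh = H_{\cle}^2(\D^n)$, forcing $\eta = 0$ and contradicting $\cle_0 \neq \{0\}$. Therefore $\Psi(0)$ is a c.n.u. contraction on $\cle$.

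I expect the main obstacle to be conceptual rather than computational: recognizing that one should aim for the completely non-unitary conclusion head-on, and that the unitarity of $\Psi(0)$ on $\cle_0$ rigidly forces all nonconstant Taylor coefficients of $\Psi$ to annihilate $\cle_0$, thereby turning $\cle_0$ into a reducing subspace for the full multiplication operator. Once this rigidity is in hand, the tension between $A\,\Pi\clh = 0$ and $A|_{\cle_0} = I_{\cle_0}$, combined with the fact that constants are killed by the backward shifts, lets the minimality hypothesis $\bigvee_{\bm{k}} M_{\bm{z}}^{\bm{k}}\Pi\clh = H_{\cle}^2(\D^n)$ do the final work.
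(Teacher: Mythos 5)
Your proof is correct, and it takes a genuinely different route from the paper's. The paper invokes the Schur--Agler transfer-function realization together with [Proposition 3.1, \cite{BDS}] to split $\Psi = \Psi_0 \oplus \Psi_1$ along the Nagy--Foias decomposition of $\Psi(0)$, with $\Psi_0(\bm{z}) \equiv A|_{\cle_0}$ a constant unitary; it then writes any $g \in H_{\cle_0}^2(\D^n)$ as $M_{\Psi}^m h_m$ (possible because $M_{\Psi_0}$ is unitary), pairs against $\Pi h$, and lets $m \to \infty$ to get $\Pi\clh \perp H_{\cle_0}^2(\D^n)$. You instead re-derive the key rigidity by hand: unitarity of $\Psi(0)$ on $\cle_0$, contractivity of $M_\Psi$, and orthonormality of the monomials force all nonconstant Taylor coefficients of $\Psi$ to annihilate $\cle_0$, so the constants valued in $\cle_0$ form a reducing subspace on which $M_\Psi$ is unitary and hence $A_{M_\Psi}=I$; combined with $A_{M_\Psi}\Pi\clh=0$ and the fact that backward shifts kill constants, minimality forces $\cle_0=\{0\}$. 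What your version buys is self-containedness and slightly greater generality: you never use the transfer-function structure, only that $M_\Psi$ is a contractive multiplier, so the cited block-diagonalization of Schur--Agler functions is bypassed entirely. What the paper's version buys is brevity, since the rigidity you prove by hand is exactly the content of the cited proposition. Both arguments lean on the minimality hypothesis $\bigvee_{\bm{k}} M_{\bm{z}}^{\bm{k}}\Pi\clh = H_{\cle}^2(\D^n)$ in the same way at the end.
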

\begin{proof}
By definition, $\Psi \in \cls \cla_n(\cle,\cle)$ implies that there exist Hilbert spaces $\{ \clh_i \}_{i=1}^n$ and a unitary $U$ of the form in equation (\ref{transfer_uni}) such that
\[
\Phi(\bm{z}) = A + B E(\bm{z})(I - DE(\bm{z}))^{-1}C \quad (\bm{z} \in \D^n).
\]
By the Nagy-Foias decomposition of a contraction there exists an orthogonal decompostion $\cle = \cle_{0} \oplus \cle_{1}$ such that 
\[
A = \begin{bmatrix}
A|_{\cle_0} & 0 \\
0 & A|_{\cle_1} 
\end{bmatrix}: \cle_0 \oplus \cle_1 \raro \cle_0 \oplus \cle_1,
\]
where, $A|_{\cle_{0}}$ is a unitary and $A|_{\cle_1}$ is a c.n.u. contraction. With respect to this decomposition of $A$, the transfer function $\Psi(\bm{z})$ can be decomposed in the following manner (see [Proposition 3.1, \cite{BDS}])
\[
\Psi(z) = \begin{bmatrix}
\Psi_{0}(\bm{z}) & 0 \\
0 & \Psi_{1}(\bm{z})
\end{bmatrix} \quad (\bm{z} \in \D^n),
\]
where, $\Psi_0(\bm{z}) = A|_{\cle_0}$ and $\Psi_1(\bm{z}) = A|_{\cle_1} + B E(\bm{z})(I - DE(\bm{z}))^{-1}C \in \cls_n(\cle_1,\cle_1)$. The proof will be complete if we can show that $\Pi \clh \subseteq H_{\cle_{1}}^{2}(\D^n)$, since then $H_{\cle}^2(\D^{n}) = \bigvee_{\bm{k} \in \Nat^n} M_{\bm{z}}^{\bm{k}} \Pi \clh \subseteq H_{\cle_{1}}^{2}(\D^n) \subseteq H_{\cle}^2(\D^{n})$ will imply that $\cle_0 = \{0\}$. Now $g \in H_{\cle_0}^2(\D^n)$ implies that there exist $h_m \in H_{\cle_0}^2(\D^n)$ for all $m \in \Nat$ such that $g = M_{\Psi}^m h_m$ (since $M_{\Psi_0}$ is a unitary on $H_{\cle_0}^2(\D^n)$). Thus, for $f \in \Pi \clh$ we have 
\[
| \langle f,g \rangle | = | \langle f, M_{\Psi}^{m} h_m \rangle | = | \langle M_{\Psi}^{*m} \Pi h, h_m \rangle | = | \langle \Pi X^{*m}h, h_m \rangle | \leq \|X^{*n}h\| \|g\|,
\]
Taking limit on both sides as $m \raro \infty$ gives $\langle f,g \rangle = 0$ which implies that $\Pi \clh \perp H_{\cle_0}^2(\D^{n})$. This completes the proof.
\end{proof}

\subsection{Tuples associated to von Neumann inequality in polydisc}
Let us begin by giving a brief introduction on the tuples of operators appearing in \cite{BDHS}. A particular class of these tuples were first studied by Grinshpan et al. (\cite{GKVW}) in connection to multivariate von Neumann inequality. For each $i, p, q \in \{1,\ldots,n\}$ such that $p \neq q$, let
\[
\hat{X}_i := \{X \in \clx^n(\clh): (X_1,\ldots,X_{i-1},X_{i+1},\ldots,X_n) \in \clx^{n-1}(\clh)\}.
\]
and
\[
\clx_{p,q}^n(\clh) := \{X \in \clx^n(\clh): \hat{X}_p, \hat{X}_q \in \mathbb{S}_{n-1}(\clh) \text{ and } \hat{X}_p \text{ is pure}\}.
\]
$X \in \clx_{p,q}(\clh)$ is said to be of \textit{finite rank} if 
\[
\cld_{\hat{X}_{i}} := \overline{\mbox{ran}} \hspace{1mm}\mathbb{S}^{-1}(\hat{X}_i,{\hat{X}_{i}}^*)^{\frac{1}{2}},
\]
is a finite dimensional subspace of $\clh$, for $i=p,q$.
The following result on isometric dilation for these tuples was obtained in [Theorem 5.3, \cite{BDHS}].
\begin{thm}\label{BDS1}
Let $X=(X_1,\ldots,X_n) \in \clx_{p,q}^n(\clh)$ be finite rank, then there exists a Hilbert space $\cle$, inner multiplies $\Phi_p,\Phi_q \in H_{\cle}^{\infty}(\D^{n-1})$
and an isometry $\Pi: \clh \raro H_{\cle}^2(\D^{n-1})$ such that
\[
\Pi X_i^* = \left\{
                \begin{array}{ll}
                  M_{z_i}^* \Pi \quad (i \in \{1,\ldots,n-1\} \setminus \{p,q\}),\\
                  M_{\Phi_i}^* \Pi \quad (i=p,q),
               \end{array}
              \right. 
\]
where $\Phi_p,\Phi_q$ are inner polynomials of degree atmost one such that
\[
\Phi_p(\bm{z}) \Phi_q(\bm{z}) = \Phi_q(\bm{z}) \Phi_p(\bm{z}) = z_p I_{\cle} \quad (\bm{z} \in \D^{n-1}).
\]
In other words, $X$ dilates to a $n$-tuple of commuting isometries 
\[
(M_{z_1},\ldots,M_{z_{p-1}}, M_{\Phi_p},M_{z_{p+1},}  \ldots, M_{z_{q-1}}, M_{\Phi_q}, M_{z_{q+1}}, \ldots,M_{z_n})
\]
on $H_{\cle}^2(\D^{n-1})$.
\end{thm}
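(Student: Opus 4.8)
The plan is to reduce the full $n$-tuple to a \emph{pure} Szeg\"o-positive $(n-1)$-tuple, for which a canonical dilation to $M_{\bm z}$ on $H^2_\cle(\D^{n-1})$ is available, and then to recover $X_p$ and $X_q$ as a commuting pair of isometric (inner) multipliers whose product is a coordinate shift. Concretely, I would form the $(n-1)$-tuple $\tilde X$ obtained from $X$ by deleting the entry $X_q$ and replacing the entry $X_p$ by the product $X_pX_q$, leaving the remaining $n-2$ entries $X_i$ ($i\neq p,q$) untouched. The whole argument then rests on showing that $\tilde X$ is pure and lies in $\mathbb{S}_{n-1}(\clh)$, dilating it, and analysing how $X_p$ and $X_q$ sit in the commutant of the dilated tuple; this is the route taken in [Theorem 5.3, \cite{BDHS}].

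First I would check purity of $\tilde X$. Since $\hat X_p$ is pure, every $X_i$ with $i\neq p$ is a pure contraction; in particular $X_q$ is pure. As $X_p$ is a contraction commuting with $X_q$, one has $\|(X_pX_q)^{*m}h\|=\|X_q^{*m}X_p^{*m}h\|\le \|X_q^{*m}h\|\raro 0$, so $X_pX_q$ is pure, and hence every entry of $\tilde X$ is pure. Next I would verify $\tilde X\in\mathbb{S}_{n-1}(\clh)$, i.e. $\mathbb{S}_{n-1}^{-1}(\tilde X,\tilde X^*)\ge 0$; this is the step where the two hypotheses $\hat X_p,\hat X_q\in\mathbb{S}_{n-1}(\clh)$ genuinely enter, and it is precisely the Szeg\"o-positivity computation of [Proposition 4.1, \cite{BDHS}]. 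The finite-rank assumption on $\cld_{\hat X_p}$ and $\cld_{\hat X_q}$ is then used to see that the defect space $\cle$ associated with $\tilde X$ is finite dimensional.

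With $\tilde X$ a pure member of $\mathbb{S}_{n-1}(\clh)$, the canonical dilation theorem furnishes an isometry $\Pi:\clh\raro H^2_\cle(\D^{n-1})$ with $\Pi\tilde X_j^*=M_{z_j}^*\Pi$ and $H^2_\cle(\D^{n-1})=\bigvee_{\bm k}M_{\bm z}^{\bm k}\Pi\clh$; under the natural relabelling this gives $\Pi X_i^*=M_{z_i}^*\Pi$ for $i\neq p,q$ and assigns the $p$-th coordinate to the product, $\Pi(X_pX_q)^*=M_{z_p}^*\Pi$. Because $X_p$ and $X_q$ each commute with all entries of $\tilde X$, the standard identification of the commutant of a doubly commuting shift tuple with $\mult$ (as in [Proposition 4.2, \cite{jaydeb3}]) produces contractive multipliers $\Phi_p,\Phi_q\in\mult_1(H^2_\cle(\D^{n-1}))$ with $\Pi X_p^*=M_{\Phi_p}^*\Pi$, $\Pi X_q^*=M_{\Phi_q}^*\Pi$, and $(M_{\Phi_p}M_{\Phi_q})^*\Pi=M_{z_p}^*\Pi$; by minimality $M_{\Phi_p}M_{\Phi_q}=M_{\Phi_q}M_{\Phi_p}=M_{z_p}$, i.e. $\Phi_p(\bm z)\Phi_q(\bm z)=\Phi_q(\bm z)\Phi_p(\bm z)=z_pI_\cle$.

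It remains to see that $\Phi_p,\Phi_q$ are inner of degree at most one. Since $M_{z_p}$ is an isometry, the relation $M_{\Phi_q}^*M_{\Phi_p}^*M_{\Phi_p}M_{\Phi_q}=M_{z_p}^*M_{z_p}=I$ together with $M_{\Phi_p}^*M_{\Phi_p}\le I$ forces $M_{\Phi_q}^*M_{\Phi_q}=I$, and symmetrically $M_{\Phi_p}^*M_{\Phi_p}=I$; hence both multipliers are inner. Then $(M_{\Phi_p},M_{\Phi_q})$ is a pair of commuting isometries whose product is the pure shift $M_{z_p}$, so the Berger--Coburn--Lebow model for such pairs identifies $\Phi_p,\Phi_q$ as inner polynomials of degree at most one built from a projection and a unitary on $\cle$, yielding the asserted conclusion. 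I expect the genuine obstacle to be the middle step: verifying that replacing the pair $(X_p,X_q)$ by its product preserves Szeg\"o positivity and keeps the defect space finite dimensional, since this is exactly where the interplay of the two separate positivity hypotheses $\hat X_p,\hat X_q\in\mathbb{S}_{n-1}(\clh)$ must be exploited. The commutant-to-multiplier passage and the Berger--Coburn--Lebow classification are comparatively routine once this reduction is in place.
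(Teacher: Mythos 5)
First, a point of reference: the paper does not prove this statement at all --- it is imported verbatim as [Theorem 5.3, \cite{BDHS}], so your proposal has to be measured against the argument in that source. Your overall skeleton does match it: the reduction to the $(n-1)$-tuple $\tilde X$ obtained by replacing the pair $(X_p,X_q)$ with the single entry $X_pX_q$, the purity of $\tilde X$, and the Szeg\"o positivity of $\tilde X$ via the identity $I-C_{X_pX_q}=(I-C_{X_p})+C_{X_p}(I-C_{X_q})$, which gives $\mathbb{S}_{n-1}^{-1}(\tilde X,\tilde X^*)=\mathbb{S}^{-1}(\hat X_q,\hat X_q^*)+C_{X_p}\big(\mathbb{S}^{-1}(\hat X_p,\hat X_p^*)\big)\geq 0$, are exactly the content of [Proposition 4.1, \cite{BDHS}], and the canonical dilation of the pure Szeg\"o-positive tuple $\tilde X$ is the right next step.

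The genuine gap is the step you call ``comparatively routine'': producing $\Phi_p,\Phi_q$ at all. The identification of the commutant of $M_{\bm z}$ with $\mult(H^2_\cle(\D^{n-1}))$ describes operators \emph{on the model space} commuting with the shifts; here $X_p$ and $X_q$ live on $\clh$, and $\Pi\clh$ is merely a joint $M_{\bm z}^*$-invariant subspace of $H^2_\cle(\D^{n-1})$. Extending $\Pi X_p^*\Pi^*$ from that co-invariant subspace to the adjoint of a (contractive) multiplier on the whole space is precisely a commutant lifting problem on the polydisc, which fails in general for $n-1\geq 2$ --- this is the entire subject of \cite{BLTT} and cannot be invoked as a standard fact. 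The proof in \cite{BDHS} does not lift anything: it constructs $\Phi_p=(P+z_pP^{\perp})U^*$ and $\Phi_q=U(P^{\perp}+z_pP)$ explicitly, by building a densely defined isometry on the defect space from the operator relations and extending it to a unitary $U$ together with a projection $P$; the finite-rank hypothesis is consumed exactly here, since an isometry between subspaces of a finite-dimensional space always extends to a unitary. Your proposal uses finite rank only to remark that $\cle$ is finite dimensional (which the dilation of $\tilde X$ never needs), and the subsequent appeals to ``minimality'' for $\Phi_p\Phi_q=z_pI_\cle$ and to Berger--Coburn--Lebow are also not quite available as stated: applied to the pair $(M_{\Phi_p},M_{\Phi_q})$ with product $M_{z_p}$, the BCL model produces coefficients acting on the wandering subspace of $M_{z_p}$, which is $H^2_\cle(\D^{n-2})$ rather than $\cle$, so the degree-one form with coefficients in $\clb(\cle)$ again only comes out of the explicit construction. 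In the correct argument the product identity and innerness are direct computations from the formulas for $\Phi_p,\Phi_q$, not consequences of an abstract lifting.
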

A remarkable contribution of the above result lies in the fact that $M_{\Phi_p},M_{\Psi_q}$ are explicitly determined to be of Berger, Coburn and Lebow type isometries \cite{BCL}. In particular, there exists a collection $(\cle,U,P)$ where, $\cle$ is a Hilbert space, $U$ is a unitary and $P$ is an orthogonal projection such that
\begin{equation}\label{BCL}
\Phi_p(\bm{z}) = (P+z_pP^{\perp})U^*; \quad \Phi_q(\bm{z}) = U(P^{\perp} + z_p P).
\end{equation}
Using Theorem \ref{main3}, we can establish the following result on pure isometric dilation.
\begin{thm}
Let $X=(X_1,\ldots,X_n)$ be a tuple of commuting pure contractions which belongs to the collection $\clx_{p,q}^n(\clh)$ and has finite rank. Then there exists a Hilbert space $\cle$ such that $X$ dilates to a $n$-tuple of commuting pure isometries 
\[
(M_{z_1},\ldots,M_{z_{p-1}}, M_{\Phi_p},M_{z_{p+1},}  \ldots, M_{z_{q-1}}, M_{\Phi_q}, M_{z_{q+1}}, \ldots,M_{z_n})
\]
on $H_{\cle}^2(\D^{n-1})$.
\end{thm}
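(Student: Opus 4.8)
The plan is to upgrade the isometric dilation already produced by Theorem~\ref{BDS1} into a \emph{pure} one, and the main device is Theorem~\ref{main3}, which lets us test purity of each dilating multiplier on the single Hilbert space $\cle$ rather than on the whole Hardy space. First I would invoke Theorem~\ref{BDS1}: since $X \in \clx_{p,q}^n(\clh)$ has finite rank, there are a Hilbert space $\cle$, Berger--Coburn--Lebow inner multipliers $\Phi_p(\bm{z}) = (P + z_p P^{\perp})U^{*}$ and $\Phi_q(\bm{z}) = U(P^{\perp} + z_p P)$ of degree at most one, and a (minimal) isometry $\Pi \colon \clh \raro H_{\cle}^2(\D^{n-1})$ intertwining each $X_i^{*}$ with the corresponding entry of the dilating tuple, so that $H_{\cle}^2(\D^{n-1}) = \bigvee_{\bm{k}} V^{\bm{k}}\Pi\clh$, where $V$ denotes the dilating tuple. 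This realizes $X$ as a dilation by the stated commuting tuple of isometries, and it only remains to verify that each entry of that tuple is a pure isometry.

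The shifts $M_{z_i}$ with $i \neq p,q$ are automatically pure isometries on $H_{\cle}^2(\D^{n-1})$, because $\|M_{z_i}^{*m}f\|^{2}$ is a tail of the convergent expansion of $\|f\|^{2}$ and hence tends to $0$. For $M_{\Phi_p}$ and $M_{\Phi_q}$, which are contractive (indeed inner) multipliers of the Hardy space $H_{\cle}^2(\D^{n-1})$, I would apply Theorem~\ref{main3}: $M_{\Phi_p}$ is a pure contraction if and only if $\Phi_p(0) = PU^{*}$ is a pure contraction on $\cle$, and since $M_{\Phi_p}$ is an isometry this is the same as $M_{\Phi_p}$ being a pure isometry; symmetrically for $M_{\Phi_q}$ with $\Phi_q(0) = UP^{\perp}$. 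Since $\cle = \clw(M_{\bm{z}})$ is $M_{\Phi_p}^{*}$-invariant with $M_{\Phi_p}^{*}|_{\cle} = \Phi_p(0)^{*}$, establishing purity of $\Phi_p(0)$ reduces exactly to showing $M_{\Phi_p}^{*m}\eta \raro 0$ for every $\eta \in \cle$.

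To bring in the hypothesis that $X$ is pure, I would iterate the intertwining to obtain $\Pi X_p^{*m} = M_{\Phi_p}^{*m}\Pi$, whence $\|M_{\Phi_p}^{*m}\Pi h\| = \|X_p^{*m}h\| \raro 0$; that is, $M_{\Phi_p}^{*m} \raro 0$ strongly on $\ran\Pi$, and likewise $M_{\Phi_q}^{*m} \raro 0$ on $\ran\Pi$ from purity of $X_q$. The remaining issue is to propagate this convergence from $\ran\Pi$ to the relevant part of $H_{\cle}^2(\D^{n-1})$. Equivalently, I would prove that the Wold unitary part $\clh_0 := \bigcap_{m} M_{\Phi_p}^{m} H_{\cle}^2(\D^{n-1})$ of the isometry $M_{\Phi_p}$ is trivial, since an isometry is pure precisely when it is completely non-unitary. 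Orthogonality $\clh_0 \perp \ran\Pi$ is immediate: for $g \in \clh_0$ and $f \in \ran\Pi$, writing $g = M_{\Phi_p}^{m}(M_{\Phi_p}^{*m}g)$ with $\|M_{\Phi_p}^{*m}g\| = \|g\|$ gives $|\langle g,f\rangle| = |\langle M_{\Phi_p}^{*m}g, M_{\Phi_p}^{*m}f\rangle| \leq \|g\|\,\|M_{\Phi_p}^{*m}f\| \raro 0$. Moreover $\clh_0$ reduces each $M_{z_i}$ ($i\neq p,q$) by double commutativity, and it reduces $M_{\Phi_p}$ by construction, so once we also control the $M_{\Phi_q}$-direction, minimality forces $\clh_0 = \{0\}$.

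The hard part will be exactly this $M_{\Phi_q}$-direction: $M_{\Phi_p}$ and $M_{\Phi_q}$ act in the \emph{same} variable $z_p$ and do not doubly commute, so it is not clear that $\clh_0$ is $M_{\Phi_q}^{*}$-invariant, which is the piece the minimality argument needs. To overcome this I would exploit the factorization $M_{\Phi_p}M_{\Phi_q} = M_{z_p}$, with $M_{z_p}$ a pure isometry, together with the identities $M_{\Phi_p}^{*}M_{z_p} = M_{\Phi_q}$, $M_{\Phi_q}^{*}M_{z_p} = M_{\Phi_p}$, and more generally $M_{\Phi_p}^{*m}M_{z_p}^{m} = M_{\Phi_q}^{m}$ (proved by an easy induction using that $M_{\Phi_p},M_{\Phi_q}$ are commuting isometries). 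These let one transfer information between the two directions; concretely I would aim to show that $\clh_0$ is $M_{z_p}$-reducing and that $M_{z_p}|_{\clh_0}$ is unitary, so that $\clh_0 \subseteq \bigcap_{m} M_{z_p}^{m} H_{\cle}^2(\D^{n-1}) = \{0\}$. Balancing the two purity inputs (from $X_p$ and from $X_q$) against the non-commutation of $M_{\Phi_p}^{*}$ with $M_{\Phi_q}$ is the crux; the explicit degree-one form of $\Phi_p,\Phi_q$ above should make the bookkeeping tractable. Once both $\Phi_p(0)$ and $\Phi_q(0)$ are shown to be pure contractions, Theorem~\ref{main3} makes $M_{\Phi_p}$ and $M_{\Phi_q}$ pure isometries, and together with the pure shifts $M_{z_i}$ this exhibits $X$ as dilating to a commuting tuple of pure isometries on $H_{\cle}^2(\D^{n-1})$.
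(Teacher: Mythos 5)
Your overall strategy (dilate via Theorem \ref{BDS1}, then test purity of $M_{\Phi_p}$ and $M_{\Phi_q}$) is the right one, but the proof as written has a genuine gap, and you have in fact flagged it yourself: the Wold/minimality argument needs the unitary part $\clh_0=\bigcap_m M_{\Phi_p}^m H_{\cle}^2(\D^{n-1})$ to be $M_{\Phi_q}^*$-invariant (equivalently, $\clh_0^{\perp}$ to be $M_{\Phi_q}$-invariant), and $M_{\Phi_p}$, $M_{\Phi_q}$ live in the same variable $z_p$ and do not doubly commute. The repair you sketch via $M_{\Phi_p}M_{\Phi_q}=M_{z_p}$ does not close this: from $M_{z_p}^m\clh= M_{\Phi_p}^mM_{\Phi_q}^m\clh\subseteq M_{\Phi_p}^m\clh$ one only gets $\bigcap_m M_{z_p}^m H_{\cle}^2(\D^{n-1})\subseteq\clh_0$, i.e.\ the containment in the useless direction; showing $\clh_0\subseteq\bigcap_m M_{z_p}^m H_{\cle}^2(\D^{n-1})$ would require knowing in advance that $\clh_0$ sits inside the unitary part of $M_{\Phi_q}$ as well, which is essentially what is to be proved. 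So the crux is named but not resolved.

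The paper avoids this entirely by working at the level of the coefficient space rather than the full Hardy space. Lemma \ref{cnu_symbol} uses the transfer-function realization of a Schur--Agler class symbol and the Sz.-Nagy--Foias canonical decomposition of the constant term $\Phi(0)=A=A|_{\cle_0}\oplus A|_{\cle_1}$ (unitary $\oplus$ c.n.u.) to split $\Phi=\Phi_0\oplus\Phi_1$ with $M_{\Phi_0}$ unitary on $H^2_{\cle_0}(\D^{n-1})$; purity of the relevant $X_i$ then forces $\Pi\clh\perp H^2_{\cle_0}(\D^{n-1})$, and minimality gives $\cle_0=\{0\}$, i.e.\ $\Phi_p(0)$ and $\Phi_q(0)$ are c.n.u. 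The second, and equally essential, ingredient is the finite rank hypothesis: $\cle$ is finite dimensional, and a c.n.u.\ contraction on a finite-dimensional space is automatically a pure contraction, after which Theorem \ref{main3} applies. Your argument never uses finite-dimensionality of $\cle$ except to invoke Theorem \ref{BDS1}; since ``c.n.u.'' does not imply ``pure'' in infinite dimensions, any completion of your route would either have to reinstate that step or find a genuinely new way to extract purity of $\Phi_p(0)$, $\Phi_q(0)$ from purity of $X_p$, $X_q$. As it stands the proposal is an outline with its central step missing.
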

\begin{proof}
Using Lemma \ref{cnu_symbol} and Theorem \ref{BDS1} we obtain that $\Phi_p(0), \Psi_{q}(0)$ are c.n.u. contractions on the finite dimensional space $\cle$, which further implies that $\Phi(0),\Psi(0)$ are pure contractions on $\cle$ (see \cite{NF}). Now, Theorem \ref{main3} implies that $M_{\Phi_p}, M_{\Psi_q}$ are pure isometries. This completes the proof.
\end{proof}

\subsection{Tuples associated to commutant lifting in unit polydisc}
In the same manner as above, let us begin with a brief overview of tuples of contractions appearing in \cite{BDS}. These tuples were first studied by Ball et al. in their influential paper \cite{BLTT} on the commutant lifting problem in the polydisc. Given $A \in \clb(\clh)$, a conjugate map $C_A:\clb(\clh) \raro \clb(\clh)$ is the completely positive map defined by
\[
C_A(Y) := AYA^* \quad (Y \in \clb(\clh)).
\]

\begin{defn}
$\clp^n(\clh)$ is defined to be the collection of $n$-tuples of operators $X \in \clx^n(\clh)$ that satisfy the following conditions:
\begin{enumerate}
\item[(i)] $\hat{X}_n \in \mathbb{S}_{n-1}(\clh)$;
\item[(ii)] There exist positive operators $G_1,\ldots,G_{n-1} \in \clb(\clh)$ (depending on $X$) for which
\begin{enumerate}
\item[(a)] $I - X_n X_n^* = G_1 + \ldots + G_{n-1}$;
\item[(b)] $ S_X(G_i):= \prod_{j=1; j \neq i}^{n-1} (I_{\clb(\clh)} - C_{X_j}) G_i \geq 0$ for all $i \in \{1,\ldots,n\}$. 
\end{enumerate}
are satisfied.
\end{enumerate}
\end{defn}
The following result on isometric dilation of these tuples was obtained by Barik et al. in [Theorem 4.4, \cite{BDS}].

\begin{thm}\label{BDS2}
If $X \in \clp_n(\clh)$ is of finite rank, then there exists an inner multiplier $\Phi \in \cls \cla_{n-1}(\cld_{\hat{X}_n},\cld_{\hat{X}_n})$ and an isometry $\Pi: \clh \raro H_{\cld_{\hat{X}_n}}^2(\D^{n-1})$ such that
\[
\Pi X_i^* = \left\{
                \begin{array}{ll}
                  M_{z_i}^* \Pi \quad (i=1,\ldots,n-1),\\
                  M_{\Phi}^* \Pi \quad (i=n).
                \end{array}
              \right. 
\]
In other words, every finite rank $X \in \clp_n(\clh)$ dilates to $(M_{z_1},\ldots,M_{z_{n-1}}, M_{\Phi})$ on $H_{\cld_{\hat{X}_n}}^2(\D^{n-1})$.
\end{thm}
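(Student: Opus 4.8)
The plan is to obtain the isometry $\Pi$ from the Szeg\"o positivity of the sub-tuple $\hat{X}_n=(X_1,\ldots,X_{n-1})$, and then to manufacture the symbol $\Phi$ out of the family $G_1,\ldots,G_{n-1}$ through an Agler decomposition together with a lurking-isometry argument. Once $\Pi$ is isometric, intertwines all coordinates, and $M_\Phi$ is isometric, the tuple $(M_{z_1},\ldots,M_{z_{n-1}},M_\Phi)$ is automatically a commuting isometric dilation of $X$, since $\Phi$ is a multiplier and hence $M_\Phi$ commutes with every $M_{z_i}$.

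First I would set $\sfd := \mathbb{S}_{n-1}^{-1}(\hat{X}_n,\hat{X}_n^*)^{1/2}$ and $\cld_{\hat{X}_n} := \overline{\ran}\,\sfd$, which is finite-dimensional by the finite-rank hypothesis. Since $\hat{X}_n \in \mathbb{S}_{n-1}(\clh)$ we have $\sfd^2 = \prod_{i=1}^{n-1}(I_{\clb(\clh)}-C_{X_i})(I_{\clh}) \geq 0$, so the Brehmer--Sz.-Nagy construction produces a bounded operator $\Pi:\clh \raro H_{\cld_{\hat{X}_n}}^2(\D^{n-1})$,
\[
\Pi h = \sum_{\bm{k}\in \Nat^{n-1}} (\sfd\,\hat{X}_n^{*\bm{k}}h)\,\bm{z}^{\bm{k}},
\]
for which the intertwining $\Pi X_i^* = M_{z_i}^*\Pi$ $(i=1,\ldots,n-1)$ is automatic. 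A telescoping computation gives $\|\Pi h\|^2 = \|h\|^2 - \lim_{N}\langle Q_N h,h\rangle$ for a decreasing sequence of positive remainder operators $Q_N$, and I would then show $\lim_N Q_N = 0$ so that $\Pi$ is isometric. This vanishing reflects the purity implicit in the hypotheses on $\clp_n(\clh)$, and it is precisely the step that confines the dilation space to the Hardy space, with no residual unitary summand.

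The heart of the argument is the construction of $\Phi$. The pair of conditions $I-X_nX_n^* = \sum_{i=1}^{n-1}G_i$ and $S_X(G_i)=\prod_{j\neq i}(I_{\clb(\clh)}-C_{X_j})G_i \geq 0$ is exactly an Agler-decomposition certificate: I would introduce the positive kernels
\[
H_i(\bm{z},\bm{w}) := \sum_{\bm{k}\in \Nat^{n-1}} \big(\hat{X}_n^{\bm{k}}\, G_i\, \hat{X}_n^{*\bm{k}}\big)\,\bm{z}^{\bm{k}}\bar{\bm{w}}^{\bm{k}},
\]
so that $I-X_nX_n^*$ unfolds, along the diagonal, into $\sum_{i=1}^{n-1}(1-z_i\bar{w}_i)H_i(\bm{z},\bm{w})$. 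Feeding these kernels into the standard Kolmogorov/lurking-isometry construction yields auxiliary spaces $\{\clh_i\}_{i=1}^{n-1}$ and a unitary colligation $U=\left[\begin{smallmatrix} A & B\\ C& D\end{smallmatrix}\right]$ whose transfer function is a multiplier $\Phi \in \cls\cla_{n-1}(\cld_{\hat{X}_n},\cld_{\hat{X}_n})$; by construction $\Phi(0)=A$ recovers the action of $X_n$ on $\cld_{\hat{X}_n}$, and the colligation relations, transported through the already-fixed $\Pi$, deliver the remaining intertwining $\Pi X_n^* = M_\Phi^*\Pi$. This is the crux of the theorem.

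Finally, inner-ness of $\Phi$ comes essentially for free from the realization: on the distinguished boundary $\mathbb{T}^{n-1}$ the map $E(\bm{z})=\oplus_{i=1}^{n-1} z_iI_{\clh_i}$ is unitary, so the transfer-function identity
\[
I-\Phi(\bm{z})^*\Phi(\bm{z}) = C^*(I-E(\bm{z})^*D^*)^{-1}(I-E(\bm{z})^*E(\bm{z}))(I-DE(\bm{z}))^{-1}C
\]
collapses to $0$ a.e., giving $M_\Phi^*M_\Phi=I$; here the finite-rank hypothesis is what guarantees the radial boundary values exist and are genuinely isometric. The main obstacle is the third paragraph: one must extract from the single positivity datum $\{G_i\}$ a colligation that \emph{simultaneously} lands $\Phi$ in the Schur--Agler class and reproduces $X_n^*$ under $\Pi$. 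The delicate point is compatibility --- ensuring that the Kolmogorov decomposition of the kernels $H_i$ is aligned with the map $\Pi$ coming from the Szeg\"o positivity, rather than producing an unrelated model --- and this is exactly where the two-layer structure of $\clp_n(\clh)$, namely Szeg\"o positivity in the first $n-1$ variables together with the $G_i$-splitting governing the $n$-th coordinate, has to be used in tandem.
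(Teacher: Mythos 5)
A preliminary remark: the paper offers no proof of Theorem \ref{BDS2} --- it is imported verbatim from [Theorem 4.4, \cite{BDS}], and the only additional content supplied here is the explicit colligation (\ref{unitary_3}) that Section \ref{Sec5} needs afterwards. Your outline does reproduce the architecture of the proof in \cite{BDS} (a Szeg\"o-positivity dilation map $\Pi$ for $\hat{X}_n$ in the first $n-1$ variables, plus a transfer-function realization of the $n$-th coordinate manufactured from the operators $G_i$), so the only question is whether the outline closes.

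It does not, at two points. First, the isometry of $\Pi$ genuinely requires $\hat{X}_n$ to be a pure tuple: the pair $X=(I_{\clh},X_2)$ satisfies every condition in the paper's definition of $\clp^2(\clh)$ and has finite rank, yet $\sfd=0$ and $\Pi=0$; purity is an explicit hypothesis in \cite{BDS} (omitted in this paper's transcription of the definition), and ``purity implicit in the hypotheses'' is an assertion, not a proof that $Q_N\raro 0$. Second, and more seriously, the step you yourself call ``the main obstacle'' is exactly the content of the theorem and is left unexecuted: you never produce the colligation or verify $\Pi X_n^*=M_{\Phi}^*\Pi$. The resolution is more concrete than a Kolmogorov decomposition of the kernels $H_i$. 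Put $F_i^2:=S_X(G_i)$ and $\clf_i:=\overline{\ran}\,F_i$; conditions (ii)(a)--(b), together with the fact that the maps $C_{X_j}$ commute, give the single operator identity
\[
\sfd^2-X_n\sfd^2X_n^*=\sum_{i=1}^{n-1}\bigl(F_i^2-X_iF_i^2X_i^*\bigr),\qquad \sfd^2=\mathbb{S}_{n-1}^{-1}(\hat{X}_n,\hat{X}_n^*),
\]
which is precisely the statement that the assignment in (\ref{unitary_3}) is isometric from its natural domain in $\cld_{\hat{X}_n}\oplus(\oplus_{i=1}^{n-1}\clf_i)$ to that same space; the finite-rank hypothesis is what allows this lurking isometry to be extended to a unitary $U$ on the \emph{same} finite-dimensional space, so that the coefficient space of the dilation remains $\cld_{\hat{X}_n}$. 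The intertwining $\Pi X_n^*=M_{\Phi}^*\Pi$ is then a power-series computation pairing the transfer function of $U$ against $\Pi h=\sum_{\bm{k}}(\sfd\hat{X}_n^{*\bm{k}}h)\,\bm{z}^{\bm{k}}$, and that computation is where the ``compatibility'' you worry about is actually checked. Your boundary-value argument for innerness is fine once the state spaces are known to be finite dimensional, but without the identity above and the intertwining verification the proposal is an accurate table of contents rather than a proof.
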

The precise structure of the multiplier $\Phi \in \cls \cla_{n-1}(\cld_{\hat{X}_n}, \cld_{\hat{X}_n})$ is the following:\\ let $F_i^2 := S_{X}(G_i)$ for $i=1,\ldots,n-1$ and $\clf_i = \overline{ran} F_i$. The unitary in our discussion is $U:\cld_{\hat{X}_n} \oplus (\oplus_{i=1}^{n-1} \clf_i) \raro \cld_{\hat{X}_n} \oplus (\oplus_{i=1}^{n-1} \clf_i)$ such that
\begin{equation}\label{unitary_3}
U(D_{\hat{X}_n}h, F_1 X_1^*h,\ldots,F_{n-1}X_{n-1}^* h) = (D_{\hat{X}_n}X_n^*h, F_1 h, \ldots, F_n h).
\end{equation}
Let
\[
U= \begin{bmatrix}
A & B \\
C & D
\end{bmatrix}: \cld_{\hat{X}_n} \oplus (\oplus_{i=1}^{n-1} \clf_i) \raro \cld_{\hat{X}_n} \oplus (\oplus_{i=1}^{n-1} \clf_i),
\]
and $E(\bm{z}) := \oplus_{i=1}^{n-1} z_i I_{\clf_i} \in \clb(\oplus_{i=1}^{n-1} \clf_{i})$. Then $\Phi(\bm{z})$ has the realization in the following manner,
\[
\Phi(\bm{z}) := \tau_{U}(\bm{z}) = A + B E(\bm{z})(I - DE(\bm{z}))^{-1}C \quad (\bm{z} \in \D^n),
\]

Using Theorem \ref{main3}, we obtain the following result on pure isometric dilation.
\begin{thm}
Let $X=(X_1,\ldots,X_n)$ be a tuple of commuting pure contractions which belongs to the collection $\clp_n(\clh)$ and has finite rank. Then there exists a pure inner multiplier $\Psi(z) \in \cls \cla_{n-1}(\cld_{\hat{X}_n},\cld_{\hat{X}_n})$ such that $X$ dilates to a $n$-tuple of commuting pure isometries $(M_{z_1}, \ldots, M_{z_{n-1}}, M_{\Phi})$ on $H_{\cld_{\hat{X}_{n}}}^2(\D^{n-1})$.
\end{thm}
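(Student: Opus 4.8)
The plan is to follow the template of the preceding theorem for $\clx_{p,q}^n(\clh)$, with Theorem \ref{BDS2} supplying the dilation data in place of Theorem \ref{BDS1}. First I would apply Theorem \ref{BDS2} to the finite rank tuple $X \in \clp_n(\clh)$, which produces an inner multiplier $\Phi \in \cls\cla_{n-1}(\cld_{\hat{X}_n},\cld_{\hat{X}_n})$ and an isometry $\Pi \colon \clh \raro H_{\cld_{\hat{X}_n}}^2(\D^{n-1})$ intertwining $X$ with the commuting tuple $(M_{z_1},\ldots,M_{z_{n-1}},M_\Phi)$; explicitly $\Pi X_i^* = M_{z_i}^* \Pi$ for $i \in \{1,\ldots,n-1\}$ and $\Pi X_n^* = M_\Phi^* \Pi$. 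Since the shifts $M_{z_1},\ldots,M_{z_{n-1}}$ on $H_{\cld_{\hat{X}_n}}^2(\D^{n-1})$ are already pure isometries, the remaining work is precisely to show that $M_\Phi$ is a pure isometry, which upgrades the isometric dilation to a pure one.

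Next I would exploit the purity of $X$. In particular $X_n$ is a pure contraction, and together with $\Pi X_n^* = M_\Phi^* \Pi$ and the minimality of the Theorem \ref{BDS2} dilation, namely $H_{\cld_{\hat{X}_n}}^2(\D^{n-1}) = \bigvee_{\bm{k} \in \Nat^{n-1}} M_{\bm{z}}^{\bm{k}} \Pi \clh$, the hypotheses of Lemma \ref{cnu_symbol} are met (with $n-1$ playing the role of $n$ and $\cle = \cld_{\hat{X}_n}$). Lemma \ref{cnu_symbol} then yields that $\Phi(0)$ is a c.n.u. contraction on $\cld_{\hat{X}_n}$. Here the finite rank hypothesis enters decisively: it forces $\cld_{\hat{X}_n}$ to be finite dimensional, and a c.n.u. contraction on a finite dimensional Hilbert space has no unitary part, hence is automatically a pure contraction (see \cite{NF}). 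Thus $\Phi(0)$ is a pure contraction on $\cld_{\hat{X}_n}$, and specializing Theorem \ref{main3} to the $\cld_{\hat{X}_n}$-valued Hardy space on $\D^{n-1}$ shows $M_\Phi$ is a pure contraction; as $\Phi$ is inner, $M_\Phi$ is an isometry, so it is a pure isometry. Assembling the coordinates, $(M_{z_1},\ldots,M_{z_{n-1}},M_\Phi)$ is a commuting tuple of pure isometries dilating $X$.

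I expect the only genuine subtlety to be the verification that Lemma \ref{cnu_symbol} applies, i.e. confirming the minimality $H_{\cld_{\hat{X}_n}}^2(\D^{n-1}) = \bigvee_{\bm{k} \in \Nat^{n-1}} M_{\bm{z}}^{\bm{k}} \Pi \clh$ implicit in the construction behind Theorem \ref{BDS2}, together with the observation that the scalar ``c.n.u. implies pure'' step rests on finite dimensionality and would fail without the finite rank assumption. Every other step is a direct invocation of the cited results.
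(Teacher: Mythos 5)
Your proposal is correct and follows essentially the same route as the paper: invoke Theorem \ref{BDS2} for the isometric dilation, use Lemma \ref{cnu_symbol} together with finite dimensionality of $\cld_{\hat{X}_n}$ to conclude that $\Phi(0)$ is a c.n.u.\ (hence pure) contraction, and then apply Theorem \ref{main3} to upgrade $M_{\Phi}$ to a pure isometry. Your explicit flagging of the minimality condition needed for Lemma \ref{cnu_symbol} is a detail the paper leaves implicit, but it is not a different argument.
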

\begin{proof}
Using Lemma \ref{cnu_symbol} and Theorem \ref{BDS2} we obtain that $\Phi(0)$ is a c.n.u. contraction on the finite dimensional Hilbert space $\cld_{\hat{X}_{n}}$ and therefore, $\Phi(0)$ is a pure contraction on $\cld_{\hat{X}_{n}}$. Now, Theorem \ref{main3} shows that $M_{\Phi}$ is a pure isometry on $H_{\cld_{\hat{X}_n}}^2(\D^{n-1})$. This finishes the proof.
\end{proof}

\section*{Acknowledgements}
The author would like to thank Prof. B.K. Das for useful discussions and he is grateful to Prof. Kunyu Guo for a correspondence regarding Proposition \ref{wand_sub}. He would also like to thank Prof. Gautam Bharali for going through the initial drafts of this article and providing important feedback. The author is supported by DST-INSPIRE Faculty Fellowship No. - DST/INSPIRE/04/2019/000769.


\begin{thebibliography}{99}

\bibitem{Beurling}
A. Beurling, \textit{On two problems concerning linear transformations in Hilbert space}, Acta Math. 81, 239--255
(1949).

\bibitem{AM}
J. Agler, J.E. M\textsuperscript{C}Carthy, 
\textit{Pick Interpolation and Hilbert function spaces}, Graduate Studies in Mathematics, 44. American Mathematical Society, Providence, RI, 2002. xx+308 pp. ISBN: 0-8218-2898-3.

\bibitem{ARS} A. Aleman, S. Richter, C. Sundberg, \textit{Beurling's theorem for the Bergman space}, Acta Math. 177 (1996), no. 2, 275--310. 


\bibitem{BLTT} J.A. Ball, A.S. Li, D. Timotin, T.T. Trent \textit{A commutant lifting theorem on the polydisc}, Indiana Univ. Math. J. 48 (1999), 653--675.

\bibitem{BCL} C. A. Berger, L. A. Coburn, A. Lebow, \textit{Representation and index theory for $C^*$-algebras generated by commuting isometries}, J. Funct. Anal. 27 (1978), no. 1, 51--99.



\bibitem{BDS} S. Barik, B.K. Das, J. Sarkar, \textit{Isometric dilations and von Neumann inequality for finite rank commuting contractions}, Bulletin des sciences mathematiques. 165 (2020), Article number: 102915, 25 pp. 


\bibitem{BDHS} S. Barik, B.K. Das,  K.J. Haria, J. Sarkar, \textit{Isometric dilations and von Neumann inequality for a class of tuples in the polydisc}, Transactions of the American Math Society, 372 (2019), 1429-–1450. 

\bibitem{BDF} H. Bercovici, R.G. Douglas and C. Foias, \textit{On the classification of multi-isometries}, Acta Sci. Math. (Szeged) 72 (2006), no. 3-4, 639--661. 

\bibitem{BEKS} M. Bhattacharjee, J. Eschmeier, D. K. Keshari, J. Sarkar, \textit{Dilations, Wandering subspaces and Inner functions}, Linear Algebra and its Applications, 523 (2017), 263--280. 

\bibitem{CH} R. Clou\^atre, M. Hartz, \textit{Multiplier algebras of complete Nevanlinna-Pick spaces: dilations, boundary representations and hyperrigidity}, J. Funct. Anal. 274 (2018), no. 6, 1690--1738.

\bibitem{CHS} R. Clou\^atre, M. Hartz, D. Schillo, \textit{A Beurling-Lax-Halmos theorem for spaces with a complete Nevanlinna-Pick factor}, Proc. Amer. Math. Soc. 148 (2020), no. 2, 731--740.

\bibitem{Chen} Y. Chen, \textit{Quasi-wandering subspaces in a class of reproducing analytic Hilbert spaces}, Proc. Amer. Math. Soc. 140 (2012), no. 12, 4235--4242. 

\bibitem{CIL}
Y. Chen, K.J. Izuchi, J.Y. Lee,  \textit{Sequences of powers of Toeplitz operators on the Hardy space}, Integral Equations Operator Theory 90 (2018), no. 6, Paper No. 70, 18 pp.


\bibitem{Derek}
D. DeSantis, \textit{Operator Algebras Generated by Left Invertibles}, Ph.D. Thesis, University of Nebraska-Lincoln, 2019.

\bibitem{DPS}
Deepak KD, D. Pradhan, J. Sarkar, \textit{Partially isometric Toeplitz operators on the polydisc}, to appear in the Bulletin of the London Mathematical Society, arXiv:2102.01062.

\bibitem{ES}
J. Eschmeier, \textit{Bergman inner functions and m-hypercontractions}, J. Funct. Anal Volume 275, Issue 1, 1 July 2018, Pages 73--102

\bibitem{EL}
J. Eschmeier, S. Langend\"orfer, \textit{Multivariable Bergman shifts and Wold decompositions}, Integral Equations Operator Theory 90 (2018),  no. 5, Paper No. 56, 17 pp. 

\bibitem{GKVW}
A. Grinshpan, D.S. Kaliuzhnyi-Verbovetskyi, V. Vinnikov, H.J. Woerdeman, \textit{Classes of tuples of
commuting contractions satisfying the multivariable von Neumann inequality}, J. Funct. Anal. 256 (2009),
3035--3054.

\bibitem{Halmos}
P.R. Halmos, \textit{Shifts on Hilbert spaces}, J. Reine Angew. Math. 208 (1961), 102--112. 

\bibitem{HKZ}
H. Hedenmalm, B. Korenblum, K. Zhu, \textit{Theory of Bergman spaces}, Graduate Texts in Mathematics, 199. Springer-Verlag, New York, 2000. x+286 pp. ISBN: 0-387-98791-6

\bibitem{NF}
B. Sz.-Nagy, C. Foias, H. Bercovici, L. K\`erchy, \textit{Harmonic analysis of operators on Hilbert space}, Second edition. Revised and enlarged edition. Universitext. Springer, New York, 2010. xiv+474 pp. ISBN: 978-1-4419-6093-1.

\bibitem{Richter}
S. Richter, \textit{Invariant subspaces of the Dirichlet shift}, J. Reine Angew. Math. 386 (1988), 205--220. 

\bibitem{Rudin}
W. Rudin, \textit{Function theory in polydiscs}, W. A. Benjamin, Inc., New York-Amsterdam 1969 vii+188 pp. 

\bibitem{jaydeb1} J. Sarkar, \textit{Jordan Blocks of $H^2(\D^n)$}, Journal of Operator theory 72 (2014) 371--385. 

\bibitem{jaydeb2} J. Sarkar, \textit{Wold decomposition for doubly commuting isometries}, Linear Algebra and its Applications, 445 (2014), 289--301. 

\bibitem{jaydeb3} J. Sarkar, \textit{An Invariant Subspace Theorem and Invariant Subspaces of Analytic Reproducing Kernel Hilbert Spaces - II}, Complex Analysis and Operator Theory, 10 (2016), 769--782. 

\bibitem{jaydeb4} J. Sarkar, A. Sasane, B. Wick, \textit{Doubly commuting submodules of the Hardy module over polydiscs}, Studia Mathematica, 217 (2013), no 2, 179--192. 

\bibitem{SS} S. Sarkar, \textit{Pairs of commuting pure contractions and Isometric dilation}, arXiv:2105.03051.

\bibitem{Shimorin} S. Shimorin, \textit{Wold-type decompositions and wandering subspaces for operators close to isometries}, J. reine angew. Math. 531 (2001), 147--189.




\end{thebibliography}
\end{document}